\documentclass[12pt]{article}
\usepackage{amssymb}
\usepackage{amsmath}
\usepackage[all,arc]{xy}
\usepackage{ytableau}
\usepackage{hyperref}
\usepackage{tikz}
\usepackage[margin=1in]{geometry}

\usepackage{lscape}

\usepackage{amsthm}
\theoremstyle{definition}\newtheorem{remark}{Remark}[section]
\theoremstyle{definition}
\theoremstyle{plain}\newtheorem{propo}[remark]{Proposition}	
\newtheorem{theorem}[remark]{Theorem}
\newtheorem{example}[remark]{Example}
\newtheorem{definition}[remark]{Definition}
\newtheorem{lemma}[remark]{Lemma}
\newtheorem{corollary}[remark]{Corollary}

\newtheorem{conjecture}[remark]{Conjecture}
\theoremstyle{definition}

\newcommand{\C}{\mathbb{C}}

\newcommand{\Z}{\mathbb{Z}}
\newcommand{\N}{\mathbb{N}}

\ytableausetup{smalltableaux}

\begin{document}
	
	
	\title{Twisted Affine Lie Algebras, Fusion Algebras, and Congruence Subgroups}
	
	\author{Alejandro Ginory}
	\date{}
	\maketitle	
	
\begin{abstract}
	\noindent The space spanned by the characters of twisted affine Lie algebras admit the action of certain congruence subgroups of $SL(2,\Z)$. By embedding the characters in the space spanned by theta functions, we study an $SL(2,\Z)$-closure of the space of characters. Analogous to the untwisted affine Lie algebra case, we construct a commutative associative algebra (fusion algebra) structure on this space through the use of the Verlinde formula and study important quotients. Unlike the untwisted cases, some of these algebras and their quotients, which relate to the trace of diagram automorphisms on conformal blocks, have \emph{negative} structure constants with respect to the (usual) basis indexed by the dominant integral weights of the Lie algebra. We give positivity conjectures for the new structure constants and prove them in some illuminating cases. We then compute formulas for the action of congruence subgroups on these character spaces and give explicit descriptions of the quotients using the affine Weyl group.
\end{abstract}

\section{Introduction}
\label{introduction}
Fusion algebras associated with affine Lie algebras arise in the study of the Wess-Zumino-Novikov-Witten model, which is a two-dimensional rational conformal field theory [KZ]. This model features certain modules of so-called \emph{untwisted} affine Lie algebras at a fixed positive integer level (see the appendix \ref{affine-algebra-prelims} for definitions), and the corresponding fusion algebras give a way of `fusing' these modules together, akin to taking the traditional tensor products of vector spaces. E. Verlinde introduced the notion that the structure constants of the fusion algebras, called \emph{fusion rules}, are intimately connected with a certain action of the group $SL(2,\Z)$ on the space of characters of the relevant modules [V]. In fact, he conjectured a formula, called the \emph{Verlinde formula}, that explicitly computes the fusion rules in terms of the matrix entries of the action of a distinguished element $S\in SL(2,\Z)$ (corresponding to the M\"{o}bius transformation $\tau\mapsto -1/\tau$ on the upper half-plane $\mathbb{H}$).

Many proofs have been presented for Verlinde's conjecture, most notably Faltings [F], Teleman [T], and Huang [Hu1]. These results are from various points of view and have many applications. From the representation theory side, Huang constructed modular tensor categories (see [Tu] for the definition) whose Grothendieck rings are isomorphic to the fusion algebras [Hu2], giving a representation-theoretic interpretation of these algebras. (Our original motivation was to find an analogous interpretation in the twisted case.)

In this paper, we study analogous structures for \emph{twisted} affine Lie algebras and investigate their properties. Let us briefly summarize the contents. To begin with, we use the definition of twisted affine Lie algebras using generalized Cartan matrices, i.e., in terms of generators and relations (the generalized Chevalley-Serre relations). We then proceed to find the most suitable $SL(2,\Z)$-module in which to embed the space of characters of twisted affine algebras. This space is then endowed with the structure of a commutative associative algebra, which we refer to as a \emph{Verlinde algebra}, using the action of $S\in SL(2,\Z)$ in a fashion analogous to the construction of fusion algebras for untwisted affine algebras. For affine Lie algebras of type $A^{(2)}_{2\ell},$ the space of characters admits the action of $SL(2,\Z)$ so we define an algebra structure on it directly. In the remaining cases of twisted affine Lie algebras we consider a strictly larger space on which $SL(2,\Z)$ acts. 

Using the algebra structure and the affine Weyl group, we construct a natural family of quotients, in which, quite surprisingly, negative structure constants appear. These quotients are isomorphic to the fusion algebras constructed in [Ho2] which encode the trace of diagram automorphisms on conformal blocks. In [Ho2], the author discovers that the structure constants are non-negative in certain families and asks how generally this property holds. Our results show that the non-negativity of the structure constants holds in about `half' the cases (depending on level), while in the other `half' we determine when negative structure constants appear. Ultimately, we hope that the Verlinde algebras lead to a greater understanding of the structure of certain categories of modules for twisted affine Lie algebras.

Now we will describe the contents of this paper in more technical detail. The space of characters of an affine Lie algebra of type $X_N^{(r)}$ admits an action of the congruence subgroup $\Gamma_1(r)\subseteq SL(2,\Z)$. Untwisted affine algebras correspond to the case when $r=1$. Therefore, $\Gamma_1(1)=SL(2,\Z)$ acts on the space of characters of untwisted affine algebras and, using the action of the element $S=\begin{pmatrix}
0 & -1\\1& 0
\end{pmatrix}$, one can define a commutative associative algebra structure on this space explicitly using the so-called \textit{Verlinde formula}, which we will define shortly.

The construction of the algebras associated to twisted affine algebras is as follows. First, suppose that $V$ is a finite dimensional $SL(2,\Z)$-module with a fixed basis $\{e_{\lambda}\}_{\lambda\in I}$, where $0\in I$ is a distinguished index. If the 0th column of the action of $S\in SL(2,\Z)$, with respect to this basis, has no zero entries, then one can put a commutative associative algebra structure on $V,$ with $e_0$ as the identity element, by defining its structure constants as
\begin{equation}
\label{verlinde-formula-intro}
N_{\lambda\mu}^{\nu}=\sum_{\phi\in I}\frac{S_{\phi\lambda}S_{\phi\mu}(S^{-1})_{\nu\phi}}{S_{\phi 0}}
\end{equation}
(see [Ka, ex. 13.34]). In this paper, the above formula will be referred to as the \emph{Verlinde formula}. Notice that any scalar multiple of $S$ will yield the same structure constants $N_{\lambda\mu}^{\nu}$. 

We will now construct such a space associated to the character space of twisted affine algebras. Consider the space of characters $Ch_k$ of fixed positive integer level $k$ of an affine Lie algebra $\mathfrak{g}$. This complex vector space is a subspace of the space $Th_k$ of level $k$ theta functions and is spanned by $\chi_{\lambda}=A_{\lambda+\rho}/A_{\rho}$, where $A_{\mu}$ are alternating sums of theta functions (which we refer to as \emph{alternants}). The space spanned by $A_{\lambda+\rho}$, for $\lambda$ ranging over dominant level $k$ weights, is denoted $Ch_{k}^{-}$ and is a subspace of $Th_{k+h^\vee}$, where $h^\vee$ is the dual Coxeter number of $\mathfrak{g}$. In the untwisted cases and in the case that $\mathfrak{g}$ is of type $A^{(2)}_{2\ell},$ the map $\chi_{\lambda}\mapsto A_{\lambda+\rho}=\chi_{\lambda}A_{\rho}$ is an injection and intertwines the action of $SL(2,\Z)$ up to scalar. For the remaining twisted cases, $Ch_{k}^{-}$ does not admit the action of $SL(2,\Z)$, but its image under $S$ is naturally contained in a certain subspace $V\subseteq Th_{k+h^\vee}$. This subspace $V$ is the smallest subspace of $Th_{k+h^\vee}$ that contains $Ch_{k}^{-}$, is closed under $S$, and has a basis $\{A_{\mu}\}_{\mu\in X}$, where $X$ is a subset of the set of regular dominant weights (not necessarily integral of course). The space $V$ is called the $SL(2,\Z)$-\emph{closure} of $Ch_{k}^{-}.$ (The condition that $V$ be spanned by $A_{\mu}$ reflects the hope that this space will correspond to integrable highest weight modules for some affine Lie algebra and thus can be interpreted representation-theoretically.) More precisely, we have the following description of $V$.

\begin{theorem}
	\label{min-S-module}
	If $\mathfrak{g}$ is a twisted affine Lie algebra, but not of type $A^{(2)}_{2\ell}$, then the $SL(2,\Z)$-closure of $Ch_k^-(\mathfrak{g})$ is equal to $Ch_k^-(\mathfrak{g}^t),$ where $\mathfrak{g}^t$ is the transpose affine Lie algebra. In the remaining cases, $Ch_k^-(\mathfrak{g})$ is an $SL(2,\Z)$-module.
\end{theorem}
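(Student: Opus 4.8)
The plan is to compute the action of $S$ on the alternants $A_\mu$ explicitly and read off which new alternants it produces. The starting point is the Kac--Peterson modular transformation formula for theta functions (see [Ka, Ch.~13]): at level $m = k + h^\vee$, a theta function attached to the translation lattice $M$ of the affine Weyl group $W \ltimes M$ of $\mathfrak g$ is sent by $S\colon \tau \mapsto -1/\tau$ to a finite linear combination of theta functions attached to the \emph{dual} lattice $M^\ast$, with coefficients given by a normalized Gauss sum $e^{-2\pi i (\mu|\nu)/m}$ over representatives $\nu \in M^\ast/mM$. Antisymmetrizing over the finite Weyl group $W$, I would obtain a formula of the shape $S \cdot A_\mu = c\sum_\nu (\cdots)\,A_\nu$, in which $\nu$ runs over the regular weights lying in $M^\ast$ taken modulo $mM$. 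Thus $S$ intrinsically transports the index lattice of the alternants from $M$ to its dual $M^\ast$.

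The second step is to identify $M^\ast$ with the lattice governing $\mathfrak g^t$. Here I would invoke the classification of twisted affine algebras together with the transpose (Langlands) duality of affine Cartan matrices, under which $A^{(2)}_{2\ell-1}\leftrightarrow B^{(1)}_\ell$, $D^{(2)}_{\ell+1}\leftrightarrow C^{(1)}_\ell$, $E^{(2)}_6\leftrightarrow F^{(1)}_4$, and $D^{(3)}_4\leftrightarrow G^{(1)}_2$, while $A^{(2)}_{2\ell}$ is self-transpose. For each non-self-dual pair, the root lattice of the horizontal subalgebra $\mathring{\mathfrak g}$ of $\mathfrak g$ is dual to that of $\mathfrak g^t$, so after matching the normalization of the invariant form and the dual Coxeter numbers on the two sides, the regular weights modulo $mM$ produced in Step~1 are precisely the regular dominant level-$k$ weights of $\mathfrak g^t$ shifted by its Weyl vector. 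This yields $S\bigl(Ch_k^-(\mathfrak g)\bigr) \subseteq Ch_k^-(\mathfrak g^t)$ and, reading the same formula in the opposite direction (using $(\mathfrak g^t)^t = \mathfrak g$), $S\bigl(Ch_k^-(\mathfrak g^t)\bigr) \subseteq Ch_k^-(\mathfrak g)$.

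To conclude that $Ch_k^-(\mathfrak g^t)$ is exactly the $SL(2,\Z)$-closure, I would verify three things. First, $Ch_k^-(\mathfrak g)\subseteq Ch_k^-(\mathfrak g^t)$ as subspaces of $Th_{k+h^\vee}$, which follows because the regular dominant level-$k$ weights of $\mathfrak g$ form a subset of those of $\mathfrak g^t$ under the identification of Step~2, so every $\mathfrak g$-alternant is simultaneously a $\mathfrak g^t$-alternant. Second, $S$-stability of $Ch_k^-(\mathfrak g^t)$: combining the two inclusions of Step~2 with the fact that $S^2$ acts as charge conjugation and hence merely permutes the alternant basis, the space $Ch_k^-(\mathfrak g^t)$ is carried into itself by $S$ and carries the required alternant basis $\{A_\mu\}_{\mu\in X}$. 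Third, minimality: any $S$-stable subspace with an alternant basis containing $Ch_k^-(\mathfrak g)$ must contain each $S\cdot A_{\lambda+\rho}$, and I would show that the Gauss-sum coefficient attached to every regular weight $\nu$ indexing $Ch_k^-(\mathfrak g^t)$ is nonzero, so these expansions already span all of $Ch_k^-(\mathfrak g^t)$.

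The main obstacle I anticipate is Step~2: pinning down, uniformly across the four dual pairs, the exact correspondence between the dual lattice $M^\ast$ at level $m$ and the level-$k$ dominant-weight index set of $\mathfrak g^t$. This requires careful bookkeeping of the invariant bilinear form, whose normalization differs on $\mathfrak g$ and $\mathfrak g^t$, of the shift by $h^\vee(\mathfrak g)$ versus $h^\vee(\mathfrak g^t)$ (which genuinely differ, e.g.\ $2\ell$ versus $2\ell-1$ for the pair $A^{(2)}_{2\ell-1}$ and $B^{(1)}_\ell$), and of the fundamental domain for $W\ltimes mM$ acting on regular weights; establishing the non-vanishing of the Gauss-sum coefficients needed for minimality is a delicate point inside this same computation. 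The self-dual case $A^{(2)}_{2\ell}$ is then immediate: there $M = M^\ast$, so the transformation in Step~1 returns alternants of the same type and $Ch_k^-$ is already an $SL(2,\Z)$-module.
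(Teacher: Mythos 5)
Your overall skeleton (transport the alternants through the transpose duality of lattices, then pin down the closure by nonvanishing of $S$-coefficients) is the same as the paper's, but your minimality step rests on a claim that is false. The antisymmetrized Gauss sums you invoke are precisely the entries of the untwisted $S$-matrix for $\mathfrak{g}^t$, which by Theorem \ref{untwisted-modular-action} are proportional to characters of finite-type irreducibles evaluated at roots of unity, and these vanish for many pairs of weights: already for $A_1^{(1)}$ at level $k$ one has $S_{ab}\propto \sin\bigl(\pi(a+1)(b+1)/(k+2)\bigr)$, which is zero whenever $(k+2)\mid (a+1)(b+1)$ (e.g.\ $k=2$, $a=b=1$). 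The only entries guaranteed to be nonzero are those in the vacuum row/column, where Kac--Peterson gives the strictly positive sine product $S_{k\Lambda_0,\lambda}=c\prod_{\alpha\in\overset{\circ}{\Delta}_+}2\sin\frac{\pi(\overline{\lambda}+\overline{\rho},\alpha)}{k+h^\vee}$. The paper's proof exploits exactly this by pivoting through the vacuum in two steps: (i) the coefficient of $A_{\rho^t}$ in each $A_{\lambda+\rho}|_S$ is a vacuum-column entry, nonzero because $\tau(\lambda+\rho)$ is regular dominant, and since the closure $V$ is required to have a basis of alternants, a nonzero coefficient forces $A_{\rho^t}\in V$; (ii) \emph{every} coefficient in $A_{\rho^t}|_S$ is a vacuum-row entry, hence nonzero, so all $A_{\mu+\rho^t}$ with $\mu\in P^{t+}_{k+h^\vee-h}$ lie in $V$. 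Without this pivot your argument that "these expansions already span all of $Ch_k^-(\mathfrak{g}^t)$" does not close, and even the weaker claim that for each $\nu$ \emph{some} $\lambda$ gives a nonzero coefficient is not obviously provable.

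A second, structural error: your reverse inclusion $S\bigl(Ch_k^-(\mathfrak g^t)\bigr)\subseteq Ch_k^-(\mathfrak g)$ is impossible. For the types at hand $P^k\subsetneq P_k$, so the transpose-side space is strictly larger than $Ch_k^-(\mathfrak g)$ (this is the whole point of the closure), and $S$ acts invertibly on $Th_{k+h^\vee}$, so it cannot carry the larger space into the smaller one; a dimension count kills the inclusion. What is true, and what the paper uses, is that $\mathfrak g^t$ is \emph{untwisted}, so by Kac--Peterson the space $Ch^-_{k+h^\vee-h}(\mathfrak g^t)$ — note the level shift $k+h^\vee-h$; it is this space, sitting inside $Th_{k+h^\vee}$, that the theorem's notation $Ch_k^-(\mathfrak g^t)$ denotes — is itself $S$-stable up to the scalar coming from $A_{\rho^t}|_S\in\C^\times A_{\rho^t}$; your $S^2$/charge-conjugation patch is then unnecessary. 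Your Step 1 and your treatment of the self-transpose case $A^{(2)}_{2\ell}$ (where $M=M^*$ and the Kac--Peterson transformation applies directly to $Ch_k^-$) are consistent with the paper.
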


If $\mathfrak{g}$ is untwisted, then the map $\chi_{\lambda}\mapsto A_{\lambda+\rho}$ is an algebra isomorphism (since the Verlinde formula only depends on the action of $S$ up to scalar). If $\mathfrak{g}$ is of type $A^{(2)}_{2\ell},$ we get a completely new algebra defined on the space of characters. For the remaining twisted cases, we get an algebraic structure on a strictly larger space (the precise increase in size will be discussed in detail in section \ref{r>a_0 case}). In both cases, the identity element is chosen to be the alternant corresponding to the `smallest' weight. Our discussion motivates the following unified definition.

\begin{definition}
	\label{twisted-verlinde-definition}
	Let $\mathfrak{g}$ be an affine Kac-Moody Lie algebra and $k$ be a positive integer. The \emph{Verlinde algebra} $V_k(\mathfrak{g})$ is the algebra whose underlying vector space is the $SL(2,\Z)$-closure $V$ with basis $\{A_{\lambda}\}_{\lambda\in X},$ where $X$ is a subset of regular dominant weights, (see the discussion above \ref{min-S-module}) and whose structure constants, with respect to the given basis, are given by the Verlinde formula (eq. \ref{verlinde-formula-intro}).
\end{definition}

This definition has several advantageous features. The first is that the action of $\Gamma_1(r)$ on the space of characters can be exhibited directly by the $SL(2,\Z)$ action on the untwisted Verlinde algebra. Indeed, explicit formulas for the action of congruence subgroups on the linear space of characters of twisted affine algebras are given in terms of evaluations of characters of irreducible finite dimensional modules for finite dimensional simple Lie algebras similar to the untwisted case. The second is that the algebra has structure constants that can be computed via the Verlinde formula. Another, as we shall later see, is that the Verlinde algebras can be defined as quotients of the representation ring $\mathcal{R}(\overset{\circ}{\mathfrak{g}}{}^t)$ of the transpose of the underlying simple Lie algebra (the transpose Lie algebra of $B_n$ is $C_n$, $C_n$ is $B_n$, and the remaining are self-transpose).

In all cases except for $A_{2\ell}^{(2)}$, these Verlinde algebras turn out to be isomorphic to the Verlinde algebras of untwisted affine algebras at \emph{different levels}. By construction, this isomorphism is compatible with the $SL(2,\Z)$ action. More specifically, the following theorem is proved. (Note that in the case of $A^{(2)}_{2\ell}$ this is a tautology since these algebras are isomorphic to their transposes.) 

\begin{theorem}
	\label{twisted-untwisted-isomorphism}
	Let $\mathfrak{g}$ be a twisted affine Lie algebra of type $X_N^{(r)}$, $k$ a positive integer, $\mathfrak{g}^t$ its transpose algebra, then there is an algebra isomorphism 
	$$\varphi_k:V_k(\mathfrak{g})\to V_{k+h^\vee -h}(\mathfrak{g}^t)$$
	that intertwines the action of $SL(2,\Z)$ up to scalar. 
\end{theorem}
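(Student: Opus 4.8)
The plan is to construct the isomorphism $\varphi_k$ explicitly on the level of basis vectors (alternants) and then verify that it preserves the Verlinde structure constants by appealing to Theorem~\ref{min-S-module} and the observation that the Verlinde formula depends only on the matrix $S$ up to scalar. By Theorem~\ref{min-S-module}, for $\mathfrak{g}$ twisted and not of type $A_{2\ell}^{(2)}$, the $SL(2,\Z)$-closure of $Ch_k^-(\mathfrak{g})$ is $Ch_k^-(\mathfrak{g}^t)$; thus the underlying vector space of $V_k(\mathfrak{g})$ already \emph{coincides} with a space of alternants for the transpose algebra $\mathfrak{g}^t$. The core task is therefore to identify this space of alternants $\{A_\mu\}_{\mu\in X}$ as precisely the level-$(k+h^\vee-h)$ character-alternant space $Ch_{k+h^\vee-h}^-(\mathfrak{g}^t)$ of the \emph{untwisted} affine algebra associated to $\mathfrak{g}^t$, i.e.\ to match the index set $X$ with the set of dominant level-$(k+h^\vee-h)$ weights shifted by the appropriate $\rho$.

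\medskip

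First I would set up the bookkeeping of levels and Weyl vectors. The space $Ch_k^-(\mathfrak{g})$ lives in $Th_{k+h^\vee}$, since $A_{\lambda+\rho}$ involves the shift by the dual Coxeter number $h^\vee$ of $\mathfrak{g}$. When we reinterpret the same theta-function space as alternants for $\mathfrak{g}^t$, the relevant shift is by the dual Coxeter number of $\mathfrak{g}^t$, and since the Coxeter number $h$ of $\mathfrak{g}$ equals the dual Coxeter number of $\mathfrak{g}^t$ (the roles of Coxeter and dual Coxeter number swap under $B_n\leftrightarrow C_n$ and the self-transpose cases are trivial), the level bookkeeping forces the target level to be $k+h^\vee-h$. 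I would make this precise by computing, for each type in the standard list of twisted affine algebras, that the lattice and Weyl-group data defining the alternants $A_\mu$ for $\mathfrak{g}$ at level $k$ coincide with those defining $A_\nu$ for $\mathfrak{g}^t$ at level $k+h^\vee-h$, including checking that $X$ matches the $\rho^t$-shifted dominant weights of $\overset{\circ}{\mathfrak{g}}{}^t$ at the correct level. This is the step where the numerology must be pinned down carefully and verified type by type.

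\medskip

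Next I would define $\varphi_k$ to be the induced linear map sending the basis alternant $A_{\lambda+\rho}$ (for $\mathfrak{g}$) to the basis alternant $A_{\lambda'+\rho^t}$ (for $\mathfrak{g}^t$) under the level/weight correspondence established above, with the identity elements matched (both being the alternant of the smallest weight). To show $\varphi_k$ is an algebra isomorphism, I would invoke the key property already emphasized in the excerpt: the Verlinde formula~\eqref{verlinde-formula-intro} is invariant under rescaling $S$. By Theorem~\ref{min-S-module}, the $SL(2,\Z)$-action on $V_k(\mathfrak{g})$ and the $SL(2,\Z)$-action on $V_{k+h^\vee-h}(\mathfrak{g}^t)$ are the \emph{same} action on the \emph{same} space $V$, once we identify the two alternant bases via the level correspondence; hence the matrices $S$ for the two algebras agree up to a scalar. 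Since the structure constants $N_{\lambda\mu}^\nu$ depend only on $S$ up to scalar, the two sets of structure constants coincide in the matched bases, so $\varphi_k$ is multiplicative and intertwines the $SL(2,\Z)$-action up to scalar, as claimed.

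\medskip

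The main obstacle I anticipate is the precise identification in the first two paragraphs: verifying that the $SL(2,\Z)$-closure $Ch_k^-(\mathfrak{g}^t)$ really is the \emph{honest character-alternant space} $Ch_{k+h^\vee-h}^-(\mathfrak{g}^t)$ of the untwisted affine algebra of $\mathfrak{g}^t$ at the shifted level, rather than merely an abstractly isomorphic space. Concretely this requires matching (i) the theta-function lattice of $\mathfrak{g}$ with that of $\mathfrak{g}^t$, (ii) the Weyl groups and alternation operators, and (iii) the index set $X$ of regular dominant weights with the $\rho^t$-shifted level-$(k+h^\vee-h)$ dominant weights, and confirming that under this matching the distinguished $S$-matrix entries agree up to a common scalar. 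I expect this to be a careful but essentially combinatorial computation, carried out case by case over the types $B_n^{(1)} \leftrightarrow A_{2n-1}^{(2)}$, $C_n^{(1)}\leftrightarrow D_{n+1}^{(2)}$, $F_4^{(1)}\leftrightarrow E_6^{(2)}$, $G_2^{(1)}\leftrightarrow D_4^{(3)}$ and their transposes, relying on explicit formulas for the $S$-matrix (theta transformation law) rather than on any deeper structural input.
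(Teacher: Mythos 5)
Your proposal is correct and takes essentially the same route as the paper: the paper likewise obtains Theorem \ref{twisted-untwisted-isomorphism} as a direct consequence of (the proof of) Theorem \ref{min-S-module}, which identifies the closure with $Th_{k+h^\vee}^{t-}\cong Ch_{k+h^\vee-h}^{t}$ (the shift $k+h^\vee-h$ coming from $h^{\vee t}=h$, exactly your bookkeeping), combined with the scalar-invariance of the Verlinde formula and the fact that multiplication by $A_{\rho^t}$ intertwines the $S$-action up to a nonzero scalar. The only cosmetic divergence is that where you anticipate a type-by-type numerological verification, the paper carries out the lattice/Weyl-group matching uniformly via the isometry $\tau(\alpha_i)=\alpha_i^{\vee t}$, $\tau(\Lambda_0)=d^t$ and the shifted map $\overset{\bullet}{\tau}(\lambda)=\tau(\lambda+\rho)-\rho^t$, so no case analysis is required.
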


Verlinde algebras have striking features, including the following algebra grading structure whose proof relies on the Kac-Walton algorithm (see section \ref{r=1 section}). 

\begin{theorem}
	\label{Verlinde-grading}
	Let $\mathfrak{g}$ be an affine Lie algebra of type $X_{N}^{(1)},$ $V$ be its Verlinde algebra at level $k$, and set $\mathcal{A}=\overset{\circ}{P}/\overset{\circ}{Q}$. The algebra $V$ has an $\mathcal{A}$-grading
	$$V=\bigoplus_{a\in \mathcal{A}}V_a$$
	where $V_a=\text{span}\{\chi_{\lambda}:\lambda\in P_k^+\text{ and } \overline{\lambda}\in a\}.$
\end{theorem}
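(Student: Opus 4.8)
The plan is to verify two things: that the displayed direct-sum decomposition holds at the level of vector spaces, which is immediate, and that the multiplication respects the grading, i.e. that $V_a\cdot V_b\subseteq V_{a+b}$ for all $a,b\in\mathcal{A}$. The first is clear, since the basis $\{\chi_\lambda:\lambda\in P_k^+\}$ of $V$ is partitioned according to the value of $\overline{\lambda}\in\mathcal{A}=\overset{\circ}{P}/\overset{\circ}{Q}$, so the $V_a$ span $V$ and meet pairwise trivially. For the second, since $\chi_\lambda\cdot\chi_\mu=\sum_\nu N_{\lambda\mu}^\nu\chi_\nu$, it suffices to prove that the fusion coefficient vanishes unless the classes are compatible:
$$N_{\lambda\mu}^\nu\neq 0\ \Longrightarrow\ \overline{\nu}=\overline{\lambda}+\overline{\mu}\quad\text{in }\mathcal{A}.$$

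The main tool will be the Kac-Walton algorithm, which expresses each level-$k$ fusion coefficient as an alternating sum of tensor-product multiplicities of the underlying finite-dimensional simple Lie algebra $\overset{\circ}{\mathfrak{g}}$. Writing $\overline\lambda,\overline\mu,\overline\nu$ for the classical parts and $T_{\overline\lambda\,\overline\mu}^{\,\sigma}=\dim\operatorname{Hom}(V_\sigma, V_{\overline\lambda}\otimes V_{\overline\mu})$ for the finite tensor-product multiplicities, the algorithm yields
$$N_{\lambda\mu}^\nu=\sum_{w\in\widehat{W}}\epsilon(w)\,T_{\overline\lambda\,\overline\mu}^{\,\overline{w\cdot(\nu+\widehat\rho)-\widehat\rho}},$$
where $\widehat{W}$ is the affine Weyl group acting through its shifted action at level $k+h^\vee$, and only those $w$ for which $w\cdot(\nu+\widehat\rho)-\widehat\rho$ is dominant contribute.

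With this in hand the argument reduces to two congruences, each checked modulo $\overset{\circ}{Q}$. First, every weight of $V_{\overline\lambda}\otimes V_{\overline\mu}$ lies in $\overline\lambda+\overline\mu+\overset{\circ}{Q}$, so the multiplicity $T_{\overline\lambda\,\overline\mu}^{\,\sigma}$ vanishes unless $\overline\sigma=\overline\lambda+\overline\mu$ in $\mathcal{A}$. Second, the shifted affine Weyl action preserves classes modulo $\overset{\circ}{Q}$: the finite reflections $s_i$ satisfy $s_i(\xi)-\xi\in\overset{\circ}{Q}$, while the affine generator $s_0$ acts by $s_\theta$ followed by a translation by an integer multiple of the highest root $\theta\in\overset{\circ}{Q}$, and the $\widehat\rho$-shift cancels after projection; reducing to these generators gives $\overline{w\cdot(\nu+\widehat\rho)-\widehat\rho}=\overline{\nu}$ for every $w\in\widehat{W}$. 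Combining the two facts, every nonvanishing summand forces $\overline{\nu}=\overline{\lambda}+\overline{\mu}$, which is the desired implication. Finally, the identity $\chi_{k\Lambda_0}$ has classical part $0$ and so lies in $V_0$, confirming that the grading is compatible with the unit.

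I anticipate the main obstacle to be the bookkeeping in the second congruence: one must track the translation lattice appearing in $\widehat{W}$ and the $\widehat\rho$-shift carefully across the change of level from $k$ to $k+h^\vee$, and confirm that the translations genuinely land in $\overset{\circ}{Q}$ rather than merely in the coweight or coroot lattice. Reducing the affine Weyl action to its generators $s_0,\dots,s_n$ sidesteps most of this, since it is enough to see that each generator fixes $\overset{\circ}{P}/\overset{\circ}{Q}$ pointwise, but pinning down the precise form of the $s_0$-action in the shifted picture is where the care is required.
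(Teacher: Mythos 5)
Your proposal is correct and follows essentially the same route as the paper's proof: both reduce the fusion coefficient via the Kac--Walton algorithm to finite tensor-product multiplicities and then check the two congruences modulo $\overset{\circ}{Q}$ (the paper phrases your first congruence through the Racah--Speiser algorithm, using that weights of $V_{\overline{\mu}}$ differ from $\overline{\mu}$ by elements of $\overset{\circ}{Q}$, and handles the affine Weyl part via $M=\nu(\overset{\circ}{Q}{}^\vee)\subseteq \overset{\circ}{Q}$ together with the triviality of the $\overset{\circ}{W}$-action on $\mathcal{A}$, exactly as you do by reducing to the generators $s_0,\dots,s_\ell$). The only blemish is notational: in your displayed Kac--Walton formula you write $w\cdot(\nu+\widehat\rho)-\widehat\rho$ while also declaring that $\widehat{W}$ acts through its shifted action, which double-counts the $\widehat\rho$-shift, but the intended meaning is clear and the argument is unaffected.
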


In certain cases, the grading structure highlights the subspace of characters of twisted affine algebras, but not in general. The grading is not immediately evident from studying the $S$-matrix but can be observed after computing the structure constants. The grading seems to play a significant role in the Verlinde algebras of type $A^{(2)}_{2\ell}$ and in certain quotients of the other types (with multiple root lengths).

In all but the case when the affine Lie algebra is of type $A_{2\ell}^{(2)}$ with even level, the structure constants in these Verlinde algebras are known to be non-negative integers. In the $A_{2\ell}^{(2)}$ even level case, all known examples have some negative integral structure constants. We prove that in the special case $A^{(2)}_2,$ at even levels, the Verlinde algebra always has some negative structure constants and the grading helps us determine the signs of all structure constants. The author has not found this phenomenon cited in any of the literature. Based on a large amount of evidence, we conjecture (see \ref{r=a_0=2 case}) that `twisting' the basis by a character on $\overset{\circ}{P}/\overset{\circ}{Q}$ gives nonnegative integral structure constants in type $A^{(2)}_2.$\\
\\
\noindent \textbf{Remark}. There is some difficulty in working with this definition of the algebra since computing the structure constants is quite tedious if done `by hand', involving large sums of roots of unity. We overcame this problem by writing and using the Maple package {\tt Affine Lie Algebras}, which can efficiently compute these quantities and helped reveal much of the phenomena studied here. This program helped immensely in testing our conjectures for many families of affine Lie algebras.

The same algebraic structure can be defined in terms of a certain action of the affine Weyl group on the representation ring of an associated finite dimensional simple Lie algebra. While this sidesteps the problem of the aforementioned computations and is more conceptual, working with the algebraic structure in this setting involves finding specific representatives in the infinite orbits of the affine Weyl group, again a difficult task. Nonetheless, in some new cases arising from our methods, we present a recursive method of computing these representatives by finding a natural family of affine Weyl group elements that move the vectors they act on closer to a certain fundamental domain of the action of the group (see section \ref{quotient-a2even}).

\subsection{Structure of Paper}

In section \ref{r=1 section}, the well-known Verlinde algebras for untwisted affine Lie algebras are reviewed and their structure is discussed. In section \ref{r>a_0 case}, the definition of Verlinde algebras is given for all twisted affine Lie algebras. The grading structure and linear subspace spanned by the twisted affine algebra characters are discussed. In section \ref{r=a_0=2 case}, the Verlinde algebras of type $A_{2\ell}^{(2)}$ are more extensively studied. When the level is odd, these algebras are well understood and are isomorphic to an untwisted Verlinde algebra. When the level is even, as mentioned earlier, the structure constants are sometimes negative. Finally, in section \ref{applications section}, we show that J. Hong's fusion algebras for affine Kac-Moody Lie algebras are isomorphic to natural quotients of the Verlinde algebras presented here and thereby answer some open questions about non-negative structure constants posed in [Ho2]. The quotient structure of the Verlinde algebra of type $A^{(2)}_{2\ell}$ is then explicitly computed in terms of the affine Weyl group. In the process, we produce an algorithm for bringing weights that lie in $2kA_1$ into $kA_{1}$, where $A_1$ is the fundamental alcove of the affine Weyl group of type $C$ and $k$ is a positive integer. The algorithm is a technical tool can be used to study the negative structure constants but is of interest in its own right. In addition, explicit formulas for the action of congruence subgroups on the space of characters are given, proving certain identities such as the following one (which is very similar to the classical $S$-matrix action).

$$\sum_{\nu\in \overset{\circ}{P}}e^{-2\pi i m_{\nu}}(S_{k\Lambda_0,\nu})^2\overset{\circ}{\chi}_{\lambda}\left(e^{-\frac{2\pi i (\nu+\rho)}{k+h^\vee}}\right)\overset{\circ}{\chi}_{\mu}\left(e^{\frac{2\pi i (\nu+\rho)}{k+h^\vee}}\right)=e^{2\pi i (m_{\lambda}+m_{\mu})}S_{k\Lambda_0,\mu}\overset{\circ}{\chi}_{\lambda}\left(e^{-\frac{2\pi i (\mu+\rho)}{k+h^\vee}}\right)$$

\noindent The action of the element $u_{21}^r$ of $\Gamma_1(r)$ is computed and shown to be expressible as multiples of evaluations of \emph{summands} of the characters of finite dimensional simple Lie algebras. In the appendix \ref{notations-conventions}, we introduce notation and briefly go over the basic representation theory of affine Kac-Moody Lie algebras. 

\bigskip

\subsection{Acknowledgements}

I would like to thank Siddhartha Sahi for introducing me to this subject and for the many fruitful discussions regarding it. I am also grateful to Angela Gibney, Daniel Krashen, and James Lepowsky for their insightful and useful comments on an early draft of this paper. In addition, I would like to acknowledge Jason Saied and Eric Wawerczyk for excellent feedback.

\section{Verlinde Algebras of Untwisted Affine Algebras}
\label{r=1 section}

This section introduces the well-known Verlinde algebras of untwisted affine Lie algebras, i.e., affine Kac-Moody Lie algebras of type $X_N^{(1)}$, and discuss their grading structure, including the proof of Theorem \ref{Verlinde-grading}.

Let $\mathfrak{g}$ be an untwisted affine Lie algebra and $k$ a positive integer that will serve as the level. Recall that in this case $Ch_k$ is invariant under the action of $SL(2,\Z)$. More importantly, the action of the element $S\in SL(2,\Z)$ (defined in \ref{theta-fns-characters}) encodes information about a commutative associative algebra structure on $Ch_k$. The $S$-\emph{matrix}, which gives the action of $S$ with respect to the basis of characters, encodes the structure constants of this algebra via the Verlinde formula, which we restate for this particular case,
$\chi_{\lambda}\chi_{\mu}=\sum_{\nu\in P_k^+\text{ mod }\C\delta}N_{\lambda\mu}^{\nu}\chi_{\nu}$ where
$$N_{\lambda\mu}^{\nu}=\sum_{\phi\in P_k^{+}\text{ mod }\C\delta}\frac{S_{\lambda\phi}S_{\mu\phi}(S^{-1})_{\nu\phi}}{S_{k\Lambda_0,\phi}}.$$ 
The structure constants $N_{\lambda\mu}^{\nu}$ are known as the \emph{fusion rules}, are well-defined, and correspond to a tensor product structure on an appropriate category of modules for $\mathfrak{g}$ (which will not be discussed here). 

First, let us give an explicit description of the $SL(2,\Z)$ action on $Ch_k$, which can be found in [Ka]. Afterwards, it will be apparent that the Verlinde algebras are well-defined. Surprisingly, the $S$-matrix can be expressed in terms of evaluations of characters of irreducible highest weight $\overset{\circ}{\mathfrak{g}}$-modules. In what follows, the function $\overset{\circ}{\chi}_{\overline{\lambda}}$ is the character of the irreducible highest weight $\overset{\circ}{\mathfrak{g}}$-module with highest weight $\overline{\lambda}$. By [KP], $Ch_k$ is invariant under the action of $SL(2,\Z)$. The action is described explicitly in terms of the generators $T$ and $S$.

\bigskip

\begin{theorem} [Kac-Peterson] 
	\label{untwisted-modular-action}
	Let $\mathfrak{g}$ be an affine Lie algebra of type $X_{N}^{(1)}$ or $A_{2\ell}^{(2)}$ and $k$ be a positive integer. For any $\lambda\in P_k^+,$
	\begin{align*}
	\chi_{\lambda}|_T&=e^{2\pi i m_{\overline{\lambda}}}\chi_{\lambda}\\
	\chi_{\lambda}|_S&= \sum_{\mu\in P_k^+\text{ mod }\C\delta}S_{\lambda\mu}\chi_{\mu}
	\end{align*}
	where
	\begin{align*}
	S_{\lambda\mu}&=i^{|\overset{\circ}{\Delta}_+|}|M^*/(k+h^\vee)M|^{-1/2}\sum_{w\in \overset{\circ}{W}}\epsilon(w)e^{-\frac{2\pi i (\overline{\lambda}+\overline{\rho},w(\overline{\mu}+\overline{\rho}))}{k+h^\vee}}\\
	&=c_{\lambda}\overset{\circ}{\chi}_{\overline{\mu}}\left(e^{-\frac{2\pi i (\overline{\lambda}+\overline{\rho})}{k+h^\vee}}\right)
	\end{align*}
	and $c_{\lambda}=S_{k\Lambda_0,\lambda}=|M^*/(k+h^\vee)M|^{-1/2}\prod_{\alpha\in \overset{\circ}{\Delta}_+}2\sin\frac{\pi(\overline{\mu}+\overline{\rho},\alpha)}{k+h^\vee}$.
\end{theorem}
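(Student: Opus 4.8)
The plan is to reduce the modular behaviour of the normalized characters $\chi_{\lambda}=A_{\lambda+\rho}/A_{\rho}$ to the modular behaviour of the theta functions from which the alternants are built, and then to invoke Poisson summation and the Weyl character formula. By the Weyl--Kac character formula I would write $A_{\lambda+\rho}=\sum_{w\in W}\epsilon(w)\Theta_{w(\lambda+\rho)}$, where $W=\overset{\circ}{W}\ltimes t_{M}$ is the affine Weyl group realized as the finite Weyl group $\overset{\circ}{W}$ acting on a translation lattice $t_{M}$, and each $\Theta_{\mu}$ is a theta function whose level equals that of $\mu$; in particular the numerator lives at level $k+h^{\vee}$ and the denominator $A_{\rho}$ at level $h^{\vee}$. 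Since $\chi_{\lambda}$ is a ratio of such alternants, its transformation under either generator of $SL(2,\Z)$ is dictated by how the individual $\Theta_{\mu}$ transform, so the whole problem localizes to the classical theta transformation laws.

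The $T$-action is the easy half. Under $\tau\mapsto\tau+1$ each level-$m$ theta function $\Theta_{\mu}$ picks up the scalar $e^{\pi i(\mu,\mu)/m}$ coming from its Gaussian $q$-exponent. Forming the ratio $A_{\lambda+\rho}/A_{\rho}$ and folding in the normalizing anomaly prefactor attached to $\chi_{\lambda}$, these quadratic phases collapse to the single scalar $e^{2\pi i m_{\overline{\lambda}}}$, the exponent $m_{\overline{\lambda}}$ being defined for exactly this purpose. Thus the first displayed identity is a matter of bookkeeping the quadratic exponents and checking that the $\rho$-contributions in numerator and denominator cancel correctly.

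The substance is the $S$-action, whose engine is the transformation law $\Theta_{\mu}|_{S}=(-i\tau)^{-\ell/2}\,\abs{M^{*}/mM}^{-1/2}\sum_{\nu}e^{-2\pi i(\mu,\nu)/m}\Theta_{\nu}$, the sum running over coset representatives of $M^{*}/mM$ and $\ell$ the rank; this is Poisson summation applied to the Gaussian lattice sum defining $\Theta_{\mu}$, with $(-i\tau)^{-\ell/2}$ recording the Jacobian. Applying it termwise to $A_{\lambda+\rho}|_{S}=\sum_{w\in W}\epsilon(w)\Theta_{w(\lambda+\rho)}|_{S}$ and writing $w=\bar w\,t_{\beta}$ with $\bar w\in\overset{\circ}{W}$, $\beta\in M$, the translation $t_{\beta}$ only shifts the theta index and is swallowed by the reduction mod $mM$; after reindexing, the infinite sum over $t_{M}$ collapses and leaves, for each level-$k$ weight $\mu$, a \emph{finite} alternating sum over $\overset{\circ}{W}$ of the exponentials $e^{-2\pi i(\overline{\lambda}+\overline{\rho},\,\bar w(\overline{\mu}+\overline{\rho}))/(k+h^{\vee})}$. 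Dividing by the analogously transformed denominator $A_{\rho}$ must cancel every $\tau$-dependent automorphy factor, since $\chi_{\lambda}|_{S}$ is by construction a $\tau$-independent combination $\sum_{\mu}S_{\lambda\mu}\chi_{\mu}$ of characters; the surviving numerical coefficient is the first stated form $S_{\lambda\mu}=i^{\abs{\overset{\circ}{\Delta}_+}}\abs{M^{*}/(k+h^{\vee})M}^{-1/2}\sum_{w\in\overset{\circ}{W}}\epsilon(w)e^{-2\pi i(\overline{\lambda}+\overline{\rho},w(\overline{\mu}+\overline{\rho}))/(k+h^{\vee})}$.

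Finally I would pass from this to the compact expression $c_{\lambda}\,\overset{\circ}{\chi}_{\overline{\mu}}(e^{-2\pi i(\overline{\lambda}+\overline{\rho})/(k+h^{\vee})})$ by recognizing the finite Weyl sum as the numerator of the Weyl character formula for $\overset{\circ}{\mathfrak{g}}$ at the point $e^{-2\pi i(\overline{\lambda}+\overline{\rho})/(k+h^{\vee})}$: inserting the Weyl denominator, which by the denominator identity equals $\prod_{\alpha\in\overset{\circ}{\Delta}_+}(-2i)\sin\frac{\pi(\overline{\lambda}+\overline{\rho},\alpha)}{k+h^{\vee}}$, converts the finite sum into $\overset{\circ}{\chi}_{\overline{\mu}}(\cdots)$ times that product, and the resulting $(-i)^{\abs{\overset{\circ}{\Delta}_+}}$ cancels the leading $i^{\abs{\overset{\circ}{\Delta}_+}}$ to give exactly $c_{\lambda}=\abs{M^{*}/(k+h^{\vee})M}^{-1/2}\prod_{\alpha\in\overset{\circ}{\Delta}_+}2\sin\frac{\pi(\overline{\lambda}+\overline{\rho},\alpha)}{k+h^{\vee}}$. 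The main obstacle is not conceptual but is the constant-chasing in the $S$-step: keeping the lattices $M$ and $M^{*}$, the rescaling by $k+h^{\vee}$, and the anomaly shift $k\mapsto k+h^{\vee}$ mutually consistent, and verifying that the Gaussian anomaly phases together with the metaplectic powers of $(-i\tau)$ from numerator and denominator cancel exactly, leaving the single prefactor $\abs{M^{*}/(k+h^{\vee})M}^{-1/2}$ and no stray $\tau$-power or residual root of unity. The case $A^{(2)}_{2\ell}$ runs identically once the lattice $M$ appropriate to that type is substituted.
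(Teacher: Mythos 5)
The paper gives no proof of this theorem at all — it is quoted from Kac--Peterson with citations to [KP] and [Ka, Ch.~13] — and your sketch correctly reconstructs the standard argument of those sources: the $T$-action by bookkeeping the Gaussian exponents against the modular anomaly, the $S$-action by termwise Poisson-summation transformation of the theta functions in $A_{\lambda+\rho}=\sum_{w\in \overset{\circ}{W}}\epsilon(w)\Theta_{w(\lambda+\rho)}$ (with the cosets of $M^*/(k+h^\vee)M$ regrouped into regular $\overset{\circ}{W}$-orbits and the automorphy factors cancelling in the ratio against $A_{\rho}|_S$), and the passage to the second form of $S_{\lambda\mu}$ via the Weyl character formula and the denominator identity, so that $i^{|\overset{\circ}{\Delta}_+|}\prod_{\alpha\in\overset{\circ}{\Delta}_+}(-2i)\sin\frac{\pi(\overline{\lambda}+\overline{\rho},\alpha)}{k+h^\vee}$ collapses to the positive product in $c_{\lambda}$. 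Note also that your version of $c_{\lambda}$, with $\overline{\lambda}+\overline{\rho}$ in the sine product, is the correct one — the statement as printed in the paper has a typo ($\overline{\mu}$ where $\overline{\lambda}$ should appear).
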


\bigskip

Note that the constant $c_\lambda=S_{k\Lambda_0,\lambda}$ in the above theorem is a positive real number. Furthermore, the theorem shows that the $S$-matrix is symmetric, therefore the product in $V_k$ is commutative. The associativity follows from the fact that $\chi_{k\Lambda_0}$ is decreed to be the identity element and the columns of the $S$-matrix are assumed to be, up to nonzero scalars, a system of orthogonal idempotents of $V_k$ and, under this assumption, the Verlinde formula simply computes the structure constants of the resulting algebra. It is worth noting that $S^{-1}=\overline{S}^t=\overline{S}$. We now proceed to prove Theorem \ref{Verlinde-grading}, i.e., that these Verlinde algebras are graded by $\overset{\circ}{P}/\overset{\circ}{Q}$.

\begin{proof}[Proof of \ref{Verlinde-grading}]
	Let us denote by $P(\overline{\mu})$ the set of weights (regarded as a multiset) of the irreducible finite dimensional $\overset{\circ}{\mathfrak{g}}$-module with highest weight $\overline{\mu}$. By the Kac-Walton algorithm [W], the fusion rules for $\lambda,\mu,\nu\in P_{k}^{+}$ are of the form
	$$N_{\lambda\mu}^{\nu}=\sum_{\phi}\epsilon(w_{\phi})N_{\overline{\lambda}\overline{\mu}}^{\overline{\phi}}$$
	where $\phi$ ranges over a finite multiset of integral weights of $\overset{\circ}{\mathfrak{g}}$ and $w_\phi$ are certain elements in $W$. Therefore, $N_{\overline{\lambda}\overline{\mu}}^{\overline{\nu}}=0$ implies that $N_{\lambda\mu}^{\nu}=0$. Using the Racah-Speiser algorithm [FH,\textsection 25.3], the finite fusion rules have the form
	$$N_{\overline{\lambda}\overline{\mu}}^{\overline{\nu}}=\sum_{\overline{\phi}}\epsilon(w_{\overline{\phi}})$$
	where the sum ranges over all $\overline{\phi}\in P(\overline{\mu})$, such that $w_{\overline{\phi}}(\overline{\lambda}+\overline{\phi}+\overline{\rho})-\overline{\rho}=\overline{\nu}$ for some $w_{\overline{\phi}}\in \overset{\circ}{W}$. Note that such $w_{\overline{\phi}}$ are unique (when they exist). 
	
	\bigskip
	
	In these cases, $M=\overset{\kern -.6em\circ}{Q^{\vee }}\subseteq \overset{\circ}{Q}\subseteq M^*=\overset{\circ}{P}$ so let $\mathcal{A}=M^{*}/\overset{\circ}{Q}$. Notice that $\overset{\circ}{W}$ acts trivially on $\mathcal{A}$. Define the subspaces
	$$V_a=\text{span}\{\chi_{\lambda}|\overline{\lambda}\equiv a\pmod{\overset{\circ}{Q}}\}$$
	for $a\in \mathcal{A}$. Recall that whenever $\overline{\phi}\in P(\overline{\mu}),$ $\overline{\mu}-\overline{\phi}$ is a sum of positive roots. Therefore, from the above discussion we know that
	$$\overline{\nu}=w_{\overline{\phi}}(\overline{\lambda}+\overline{\phi}+\overline{\rho})-\overline{\rho}\equiv \overline{\lambda}+\overline{\mu}\pmod{\overset{\circ}{Q}},$$
	proving that $N_{\lambda\mu}^{\nu}=0$ whenever $\overline{\lambda}+\overline{\mu}\neq \overline{\nu}$ in $\mathcal{A}$. This proves that $V=\bigoplus_{a\in \mathcal{A}}V_a$ is a grading.
\end{proof}

\bigskip

\begin{remark}
	The theorem above insinuates a relationship between the center of the associated compact Lie group $G$ and the grading structure. There may be an interpretation of this mystery in terms of an action of $Z(G)$ on the Verlinde algebra or on conformal blocks.
\end{remark}

\begin{example}
	Let us observe the grading structure in the case when $\mathfrak{g}$ is of type $A_2^{(1)}$ with level $k=2$. Consider the ordered basis $\chi_{\lambda_1},\chi_{\lambda_2},\ldots,\chi_{\lambda_6}$ where the $\overline{\lambda}_i$ are
	$$0,\overline{\Lambda}_1+\overline{\Lambda}_2,\overline{\Lambda}_1,2\overline{\Lambda}_2,2\overline{\Lambda}_1,\overline{\Lambda}_2,$$
	respectively. The weights have been paired off so that they lie in the classes $0,\overline{\Lambda}_1,2\overline{\Lambda}_1 \in \overset{\circ}{P}/\overset{\circ}{Q}$ respectively. The matrices for $L_{i}$, the operator of left multiplication by $\chi_{\lambda_i}$, are 
	$$L_1=\begin{pmatrix}
	1 & 0 &  &  &  &  \\
	0 & 1 &  &  &  &  \\
	&  & 1 & 0 &  &  \\
	&  & 0 & 1 &  &  \\
	&  &  &  & 1 & 0 \\
	&  &  &  & 0 & 1 
	\end{pmatrix} \quad L_2=\begin{pmatrix}
	0 & 1 &  &  &  &  \\
	1 & 1 &  &  &  &  \\
	&  & 1 & 1 &  &  \\
	&  & 1 & 0 &  &  \\
	&  &  &  & 0 & 1 \\
	&  &  &  & 1 & 1 
	\end{pmatrix}$$
	$$L_3=\begin{pmatrix}
	&  & & & 0 & 1 \\
	&  & & & 1 & 1 \\
	1  &1  &  & & &\\
	0  &1  &  & & &\\
	& &  1&  0&  &  \\
	& &  1&  1&  &  
	\end{pmatrix} \quad L_4=
	\begin{pmatrix}
	&  &  &  &  1& 0\\
	&  &  &  &  0&  1\\
	0&  1&  &  &  &  \\
	1&  0&  &  &  &  \\
	&  &  0&  1&  &  \\
	&  &  1&  0&  &  
	\end{pmatrix}$$
	$$L_5=
	\begin{pmatrix}
	&  &  0&  1&  &  \\
	&  &  1&  0&  &  \\
	&  &  &  &  0& 1 \\
	&  &  &  &  1& 0 \\
	1&  0&  &  &  &  \\
	0&  1&  &  &  &  
	\end{pmatrix} \quad L_6=
	\begin{pmatrix}
	&  &  1&  0&  &  \\
	&  &  1&  1&  &  \\
	&  &  &  &  1& 1 \\
	&  &  &  &  0& 1 \\
	0&  1&  &  &  &  \\
	1&  1&  &  &  &  
	\end{pmatrix}.$$
\end{example}

\begin{example}
	\label{B3-example}
	Let us observe the grading structure in the case when $\mathfrak{g}$ is of type $B_3^{(1)}$ with level $k=2$. Consider the ordered basis $\chi_{\lambda_1},\chi_{\lambda_2},\ldots,\chi_{\lambda_7}$ where the $\overline{\lambda}_i$ are
	$$0,\overline{\Lambda}_1,2\overline{\Lambda}_1,\overline{\Lambda}_2,2\overline{\Lambda}_3,$$
	$$\overline{\Lambda}_3,\overline{\Lambda}_1+\overline{\Lambda}_3$$
	respectively. There are only two classes $\{0,\overline{\Lambda}_3\}=\overset{\circ}{P}/\overset{\circ}{Q}$ and so $V_k$ has a $\Z_2$-grading. (Indeed, this is always the case for type $B_\ell^{(1)}$, with the system of representatives $\{0,\overline{\Lambda}_\ell\}$.) The first row lies in the class of $0$ and the second row lies in the class of $\overline{\Lambda}_3$. The matrices for the $L_{i}$, the operator of left multiplication by $\chi_{\lambda_i}$, are $L_1=id$
	$$L_2=\begin{pmatrix}
	0 & 1 & 0 & 0 & 0 &  &  \\
	1 & 0 & 1 & 1 & 0 &  & \\
	0 & 1 & 0 & 0 & 0 &  & \\
	0 & 1 & 0 & 0 & 1 &  & \\
	0 & 0 & 0 & 1 & 1 &  & \\
	&  &  &  &  & 1 & 1\\
	&  &  &  &  & 1 & 1
	\end{pmatrix} 
	\quad 
	L_3=
	\begin{pmatrix}
	0 & 0 & 1 & 0 & 0 &  &  \\
	0 & 1 & 0 & 0 & 0 &  & \\
	1 & 0 & 0 & 0 & 0 &  & \\
	0 & 0 & 0 & 1 & 0 &  & \\
	0 & 0 & 0 & 0 & 1 &  & \\
	&  &  &  &  &  0 & 1\\
	&  &  &  &  &  1 & 0
	\end{pmatrix}
	\quad 
	L_4=
	\begin{pmatrix}
	0 & 0 & 0 & 1 & 0 &  &  \\
	0 & 1 & 0 & 0 & 1 &  & \\
	0 & 0 & 0 & 1 & 0 &  & \\
	1 & 0 & 1 & 0 & 1 &  & \\
	0 & 1 & 0 & 1 & 0 &  & \\
	&  &  &  &  &  1 & 1\\
	&  &  &  &  &  1 & 1
	\end{pmatrix}$$
	$$L_5=
	\begin{pmatrix}
	0 & 0 & 0 & 0 & 1 &  &  \\
	0 & 0 & 0 & 1 & 1 &  & \\
	0 & 0 & 0 & 0 & 1 &  & \\
	0 & 1 & 0 & 1 & 0 &  & \\
	1 & 1 & 1 & 0 & 0 &  & \\
	&  &  &  &  &  1 & 1\\
	&  &  &  &  &  1 & 1
	\end{pmatrix}
	\quad L_6=
	\begin{pmatrix}
	&  &  &  &  &  1 & 0 \\
	&  &  &  &  &  1 & 1\\
	&  &  &  &  &  0 & 1\\
	&  &  &  &  &  1 & 1\\
	&  &  &  &  &  1 & 1\\
	1 & 1 & 0 & 1 & 1 &  & \\
	0 & 1 & 1 & 1 & 1 &  & 
	\end{pmatrix}
	\quad 
	L_7=
	\begin{pmatrix}
	&  &  &  &  &  0 & 1 \\
	&  &  &  &  &  1 & 1\\
	&  &  &  &  &  1 & 0\\
	&  &  &  &  &  1 & 1\\
	&  &  &  &  &  1 & 1\\
	0 & 1 & 1 & 1 & 1 &  & \\
	1 & 1 & 0 & 1 & 1 &  & 
	\end{pmatrix}.$$
\end{example}

\bigskip

This grading structure also applies to the case where $k=\infty$, i.e., when the Verlinde algebra is equal to the representation algebra of $\overset{\circ}{\mathfrak{g}}$, since it depends only on the Racah-Speiser algorithm for computing the tensor product decomposition of finite dimensional simple $\overset{\circ}{\mathfrak{g}}$-modules. 

\bigskip

A method of describing this algebra more conceptually is via the representation ring $\mathcal{R}(\overset{\circ}{\mathfrak{g}})$ of the finite dimensional simple Lie algebra $\overset{\circ}{\mathfrak{g}}$. (This interpretation of the Verlinde algebra will be used later on.) There is a map 
$$F_k:\mathcal{R}(\overset{\circ}{\mathfrak{g}})\to Ch_k(\mathfrak{g})$$
$$\overset{\circ}{\chi}_{\overline{\lambda}}\mapsto \left\{\begin{matrix}
\epsilon(w)\chi_{w(\lambda+\rho)-\rho} &\text{ if there exists }w\in W,\ w(\lambda+\rho)-\rho\in P_k^+\\
0 & \text{ otherwise}
\end{matrix}\right.$$
where $\lambda\in P_k^+$ (note that the $w$ referenced in the definition of $F_k$ is unique). There is also a canonical section 
$$S_k:Ch_k(\mathfrak{g})\to \mathcal{R}(\overset{\circ}{\mathfrak{g}})$$
$$\chi_{\lambda}\mapsto \overset{\circ}{\chi}_{\overline{\lambda}}.$$
The following proposition was proved in [W].

\bigskip

\begin{theorem}[Walton]
	The Verlinde algebra $V_k$ has the product structure 
	$$\chi_{\lambda}\cdot \chi_{\mu}=F_k(S_k(\chi_{\lambda})\otimes S_k(\chi_{\mu}))$$
	for $\lambda,\mu\in P_k^{+}$.
\end{theorem}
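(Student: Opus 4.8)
The plan is to unwind both sides into explicit coefficients and match them termwise, the right-hand side being computed by the Kac--Walton reduction already invoked in the proof of Theorem~\ref{Verlinde-grading}. Fix $\lambda,\mu,\nu\in P_k^+$ and write $\xi_\phi=e^{-2\pi i(\overline{\phi}+\overline{\rho})/(k+h^\vee)}$. Starting from the Verlinde formula together with the Kac--Peterson description in Theorem~\ref{untwisted-modular-action}, I substitute $S_{\lambda\phi}=c_\phi\overset{\circ}{\chi}_{\overline{\lambda}}(\xi_\phi)$, $S_{\mu\phi}=c_\phi\overset{\circ}{\chi}_{\overline{\mu}}(\xi_\phi)$, $S_{k\Lambda_0,\phi}=c_\phi$, and $(S^{-1})_{\nu\phi}=\overline{S_{\nu\phi}}=c_\phi\overset{\circ}{\chi}_{\overline{\nu}}(\xi_\phi^{-1})$, the last two using that $c_\phi$ is a positive real and that $S^{-1}=\overline{S}$. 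After cancellation this gives
\[
N_{\lambda\mu}^{\nu}=\sum_{\phi\in P_k^+\bmod\C\delta}c_\phi^{\,2}\,\overset{\circ}{\chi}_{\overline{\lambda}}(\xi_\phi)\,\overset{\circ}{\chi}_{\overline{\mu}}(\xi_\phi)\,\overset{\circ}{\chi}_{\overline{\nu}}(\xi_\phi^{-1}).
\]
Because characters are multiplicative, $\overset{\circ}{\chi}_{\overline{\lambda}}(\xi_\phi)\overset{\circ}{\chi}_{\overline{\mu}}(\xi_\phi)=\sum_{\overline{\sigma}}N_{\overline{\lambda}\overline{\mu}}^{\overline{\sigma}}\,\overset{\circ}{\chi}_{\overline{\sigma}}(\xi_\phi)$, where the $N_{\overline{\lambda}\overline{\mu}}^{\overline{\sigma}}$ are the tensor-product multiplicities of $\overset{\circ}{\mathfrak{g}}$, i.e.\ the coefficients in $S_k(\chi_\lambda)\otimes S_k(\chi_\mu)=\sum_{\overline{\sigma}}N_{\overline{\lambda}\overline{\mu}}^{\overline{\sigma}}\overset{\circ}{\chi}_{\overline{\sigma}}$.

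The crux is to evaluate, for each dominant $\overline{\sigma}$ appearing in this decomposition (which need not have level $\le k$), the inner sum $\Sigma(\overline{\sigma})=\sum_{\phi}c_\phi^{\,2}\,\overset{\circ}{\chi}_{\overline{\sigma}}(\xi_\phi)\,\overset{\circ}{\chi}_{\overline{\nu}}(\xi_\phi^{-1})$. I would write $\overset{\circ}{\chi}_{\overline{\sigma}}=A_{\overline{\sigma}+\overline{\rho}}/A_{\overline{\rho}}$ by the Weyl character formula, where $A_{\overline{\sigma}+\overline{\rho}}$ is the finite-type Weyl numerator, and observe that as a function of the argument $\overline{\sigma}+\overline{\rho}$ the evaluation $A_{\overline{\sigma}+\overline{\rho}}(\xi_\phi)$ is alternating under $\overset{\circ}{W}$ (immediate from the numerator's signed-sum form) and invariant under translation by $(k+h^\vee)$ times the relevant lattice, since $\xi_\phi$ has order dividing $k+h^\vee$ in the torus. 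Hence it is alternating for the affine Weyl group at level $k+h^\vee$. Consequently, if $\overline{\sigma}+\overline{\rho}$ lies on an affine wall then $\overset{\circ}{\chi}_{\overline{\sigma}}(\xi_\phi)=0$ and $\Sigma(\overline{\sigma})=0$; otherwise there is a unique affine Weyl element $w$ carrying $\overline{\sigma}+\overline{\rho}$ into the open fundamental alcove, so that $\overset{\circ}{\chi}_{\overline{\sigma}}(\xi_\phi)=\epsilon(w)\,\overset{\circ}{\chi}_{w(\overline{\sigma}+\overline{\rho})-\overline{\rho}}(\xi_\phi)$ with $w(\overline{\sigma}+\overline{\rho})-\overline{\rho}\in P_k^+$. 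This is exactly the rule defining $F_k$ on $\overset{\circ}{\chi}_{\overline{\sigma}}$.

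With this reduction, in the regular case $\Sigma(\overline{\sigma})=\epsilon(w)\sum_{\phi}S_{(w(\overline{\sigma}+\overline{\rho})-\overline{\rho})\,\phi}\,\overline{S_{\nu\phi}}=\epsilon(w)\,\delta_{w(\overline{\sigma}+\overline{\rho})-\overline{\rho},\,\nu}$ by unitarity of the $S$-matrix ($S\overline{S}=I$), while $\Sigma(\overline{\sigma})=0$ in the singular case. In every case $\Sigma(\overline{\sigma})$ equals the coefficient of $\chi_\nu$ in $F_k(\overset{\circ}{\chi}_{\overline{\sigma}})$. Substituting back yields $N_{\lambda\mu}^{\nu}=\sum_{\overline{\sigma}}N_{\overline{\lambda}\overline{\mu}}^{\overline{\sigma}}\cdot(\text{coefficient of }\chi_\nu\text{ in }F_k(\overset{\circ}{\chi}_{\overline{\sigma}}))$, which is the coefficient of $\chi_\nu$ in $F_k\big(S_k(\chi_\lambda)\otimes S_k(\chi_\mu)\big)$. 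Since $\nu\in P_k^+$ was arbitrary and both $\chi_\lambda\cdot\chi_\mu=\sum_\nu N_{\lambda\mu}^\nu\chi_\nu$ and the right-hand side are expanded in the basis $\{\chi_\nu\}$, the two products agree.

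I expect the main obstacle to be the affine-Weyl invariance of $A_{\overline{\sigma}+\overline{\rho}}(\xi_\phi)$: one must pin down precisely which translation lattice generates the level-$(k+h^\vee)$ affine Weyl group in this normalization and check that the pairing $(\overline{\phi}+\overline{\rho},\,w(\overline{\sigma}+\overline{\rho}))$ occurring in the $S$-matrix is genuinely invariant modulo $(k+h^\vee)\Z$ under those translations---i.e.\ that the normalization of $(\cdot,\cdot)$ together with the identifications $M=\overset{\kern -.6em\circ}{Q^{\vee}}$ and $M^*=\overset{\circ}{P}$ make the periodicity integral---and, relatedly, that the $w$ produced by reflecting $\overline{\sigma}+\overline{\rho}$ into the fundamental alcove is the same $w$ named in the definition of $F_k$. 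Once this matching is secured the rest is the $S$-matrix orthogonality, which is immediate. This argument is of course the Kac--Walton algorithm cited in the proof of Theorem~\ref{Verlinde-grading}, rephrased in the language of $F_k$ and $S_k$.
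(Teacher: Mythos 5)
Your proposal is correct and is essentially the argument behind the paper's citation: the paper gives no proof of this theorem, deferring to Walton [W], and your computation---substituting the Kac--Peterson formula of Theorem \ref{untwisted-modular-action} into the Verlinde formula, using multiplicativity of finite characters, anti-invariance of the Weyl numerator $A_{\overline{\sigma}+\overline{\rho}}(\xi_\phi)$ under $W_{aff}^{k+h^\vee}=\overset{\circ}{W}\ltimes(k+h^\vee)\nu(\overset{\circ}{Q}{}^\vee)$, and unitarity $S\overline{S}=I$---is precisely the standard Kac--Walton proof from that source. The obstacle you flag does resolve: the periodicity is integral because $(\overline{\phi}+\overline{\rho},\nu(\alpha^\vee))=\langle\overline{\phi}+\overline{\rho},\alpha^\vee\rangle\in\Z$ for $\overline{\phi}+\overline{\rho}\in\overset{\circ}{P}=M^*$ and $\alpha^\vee\in\overset{\circ}{Q}{}^\vee$, and the $w$ produced by your alcove reduction is exactly the $w$ in the paper's reformulation of $F_k$ via $W_{aff}^{k+h^\vee}$ given at the end of section \ref{r=1 section}, so the identification is exact.
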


\bigskip

Instead of using $W$ in the definition of $F_k,$ we can use the affine Weyl group $W_{aff}^{k+h^\vee}=\overset{\circ}{W}\ltimes (k+h^\vee)\overset{\circ}{Q}{}^\vee$, which acts on $\overset{\circ}{\mathfrak{h}}{}^*$. The fundamental alcove $P^{aff\ +}_{k+h^\vee}$ is defined to be the set of $\lambda\in \overset{\circ}{\mathfrak{h}}{}^*$ such that
$$0\leq (\lambda,\alpha_i) \text{ for }i=1,\ldots,\ell$$
$$(\lambda,\theta)\leq k+h^\vee,$$
this is a fundamental domain for the action of $W_{aff}^{k+h^\vee}$. The set $P^{aff\ ++}_{k+h^\vee}$ consists of all $\lambda\in P^{aff\ +}_{k+h^\vee}$ such that the above inequalities are all strict. For any $\lambda\in \mathfrak{h}^*$ of level $k$, $\overline{\lambda}+k\Lambda_0\equiv \lambda\ (\text{mod }\C\delta)$. Using $\overline{\lambda}\mapsto k\Lambda_0+\overline{\lambda}$, the sets $P^{aff\ +}_{k}$ and $P^{aff\ ++}_{k}$ can be identified with $P^{+}_{k}$ and $P^{++}_{k}$, respectively. Therefore, we can also define the map $F_k$ by the rule
$$\overset{\circ}{\chi}_{\overline{\lambda}}\mapsto \left\{\begin{matrix}
\epsilon(w)\chi_{k\Lambda_0+w(\overline{\lambda}+\overline{\rho})-\overline{\rho}} &\text{ if there exists }w\in W_{aff}^{k+h^\vee},\ w(\overline{\lambda}+\overline{\rho})\in P^{aff\ ++}_{k+h^\vee}\\
0 & \text{ otherwise}
\end{matrix}\right. .$$
Later on in \ref{applications section}, this definition will prove very useful.

\bigskip

\begin{remark}
	While the representation ring description of these Verlinde algebras show that the structure constants are integers, these algebras have an interpretation as the Grothendieck ring of a tensor category of modules of untwisted affine Lie algebras and, therefore, the structure constants are all non-negative integers [Hu1]. For the twisted cases, the Verlinde algebras will be described as a quotient of the representation ring of a finite dimensional simple Lie algebra and non-negativity of structure constants will not hold in exactly the case $A_{2\ell}^{(2)}$ when the level is even.
\end{remark}

\section{Verlinde Algebras of Twisted Affine Algebras}
\label{r>a_0 case}

The affine Kac-Moody Lie algebras of type $X_N^{(r)}$ for $r>a_0$ are twisted affine Lie algebras and their spaces of characters behave very differently from those of untwisted affine Lie algebras. In these cases, $M=\overset{\circ}{Q}$ and $P^k\subsetneq P_k,$ therefore we cannot use the same techniques used for the algebras of type $X_N^{(1)}$. In particular, there is no action of the $S$-matrix. Nonetheless, to each twisted affine Lie algebra $\mathfrak{g}$ of this type, we will associate a Verlinde algebra $V_k(\mathfrak{g})$ that contains space of characters of $\mathfrak{g}$ at level $k$ and admits the action of the modular group. In particular, we prove theorems \ref{min-S-module} and \ref{twisted-untwisted-isomorphism} in this section by comparing a twisted affine algebra $\mathfrak{g}$ to its transpose $\mathfrak{g}^t$. Since the Verlinde algebras for type $A_{2\ell}^{(2)}$ are fundamentally different from the others, we will explore them more deeply in the next section (note that $r=a_0$ in this case).

\bigskip

The lattice $\overline{P}_k$ is the dual of the root lattice, therefore it has the basis
$$\frac{a_1}{a_1^\vee}\overline{\Lambda}_1, \frac{a_2}{a_2^\vee}\overline{\Lambda}_2,\ldots,\frac{a_\ell}{a_\ell^\vee}\overline{\Lambda}_\ell.$$ 
If $\lambda\in P_k^+$ then $(\overline{\lambda},\theta)=\langle\overline{\lambda},\theta^\vee\rangle\leq k.$
Conversely, any $\overline{\lambda}\in \overline{P}^+$ satisfying the above inequality has unique preimage $\lambda \in P_k^+$ modulo $\C\delta.$

\bigskip

In [Ka], the author relates each $\mathfrak{g}$ of the type considered here to another affine algebra denoted $\mathfrak{g}'$ (this will be discussed later). For the purpose of investigating $Ch_k$, it will be helpful to instead relate $\mathfrak{g}$ to an affine Lie algebra of type $X_N^{(1)}$. To each affine Lie algebra with $r>a_0$, associate the transpose algebra $\mathfrak{g}^t=\mathfrak{g}(A^t)$, where $A$ is the Cartan matrix of $\mathfrak{g}$. The following table gives the explicit correspondences. 


\begin{center}
	\begin{tabular}{||c |c|c||} 
		\hline
		$\mathfrak{g}$ & $\mathfrak{g}^t$ & $\mathfrak{g}'$  \\ 
		\hline\hline
		$A^{(2)}_{2\ell-1}$ & $B^{(1)}_{\ell}$ &$D^{(2)}_{\ell+1}$ \\ 
		\hline
		$D^{(2)}_{\ell+1}$ & $C^{(1)}_{\ell}$ & $A^{(2)}_{2\ell-1}$ \\ 
		\hline
		$E^{(2)}_{6}$ & $F^{(1)}_{6}$ & $E^{(2)}_{6}$\\ 
		\hline
		$D^{(3)}_{2\ell-1}$ & $G^{(1)}_{\ell}$ & $D^{(3)}_{2\ell-1}$\\ 
		\hline
	\end{tabular}
\end{center}

The superscript ${}^t$ will be used when dealing with data associated with $\mathfrak{g}^t$, e.g., the labels of $\mathfrak{g}^t$ are  denoted $a_i^t$, its Cartan subalgebra is denoted $\mathfrak{h}^t$, the invariant bilinear form is $(\cdot,\cdot)^t$, etc. In order to compare the two algebras more readily, define the isometry
$$\tau:\mathfrak{h}^*\to \mathfrak{h}^t$$
$$\alpha_i\mapsto \alpha_i^{\vee t}$$
$$\Lambda_0\mapsto d^t$$
which in turn induces an isometry $\mathfrak{h}^*\to (\mathfrak{h}^t)^{*}$, also denoted $\tau$, given by
$$\Lambda_i\mapsto \frac{a_i^{\vee}}{a_i}\Lambda_i^t \text{ and }\delta\mapsto \delta^t.$$
This map `transposes' the roots and coroots and is crucial in the construction of the Verlinde algebra for twisted affine algebras. 
The restriction map $\tau|_M$ is an isomorphism of lattices $M\cong M^t$ and 
$$\tau(\theta)=\theta^t, \tau(\theta^\vee)=\theta^{\vee t}.$$  The latter fact shows that $\tau\left(P^k\right)\hookrightarrow (P^k)^t=(P_k)^t\cong P_k$. Recall that $\{\Theta_{\lambda}|\lambda\in P_k\pmod{kM+\C\delta}\}$ is a basis of $Th_k$ for $k>0$. Therefore, $\tau$ induces a canonical map between the theta functions of $\mathfrak{g}$ and $\mathfrak{g}^t$.

\begin{propo}
	The isometry $\tau$ induces an $W$-invariant isomorphism $Th_k\cong Th_k^t.$
\end{propo}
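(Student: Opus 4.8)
The plan is to show that the canonical map $e^\mu\mapsto e^{\tau(\mu)}$ induced by $\tau$ sends each basis theta function $\Theta_\lambda$ of $Th_k$ to the corresponding theta function $\Theta^t_{\tau(\lambda)}$ of $Th_k^t$, that the assignment $\lambda\mapsto\tau(\lambda)$ is a bijection of the indexing sets, and that the resulting isomorphism intertwines the $W$-actions. Recall that for a weight $\lambda$ of level $k>0$ one has
$$\Theta_\lambda=e^{-\frac{(\lambda|\lambda)}{2k}\delta}\sum_{\alpha\in M}e^{t_\alpha(\lambda)},$$
where $t_\alpha$ denotes translation by $\alpha\in M$ and is given by a formula involving only $\alpha$, the level, $\delta$, and the bilinear form.

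First I would collect the properties of $\tau$ the argument needs: it is a linear isometry of $\mathfrak{h}^*$ onto $(\mathfrak{h}^t)^*$, it restricts to the lattice isomorphism $M\cong M^t$, it satisfies $\tau(\delta)=\delta^t$, and it preserves the level (the level of $\mu$ equals $(\mu,\delta)$, which is preserved since $\tau$ is an isometry with $\tau(\delta)=\delta^t$). The central computation is that $\tau$ intertwines translations,
$$\tau\circ t_\alpha=t^t_{\tau(\alpha)}\circ\tau\qquad(\alpha\in M).$$
Since $t_\alpha$ is expressed purely through $\alpha$, the level, $\delta$, and $(\cdot,\cdot)$, and $\tau$ respects each of these, this is a direct substitution. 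Applying $\tau$ to the displayed formula for $\Theta_\lambda$ term by term, using in addition the isometry identity $(\tau\lambda|\tau\lambda)^t=(\lambda|\lambda)$ for the normalizing exponential, yields $\tau(\Theta_\lambda)=\Theta^t_{\tau(\lambda)}$.

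Next I would check that $\lambda\mapsto\tau(\lambda)$ is a bijection of indexing sets. Because $\tau$ is a linear isomorphism with $\tau(kM+\C\delta)=kM^t+\C\delta^t$, it induces an isomorphism of the ambient quotients $\mathfrak{h}^*/(kM+\C\delta)\cong(\mathfrak{h}^t)^*/(kM^t+\C\delta^t)$; since $\tau$ identifies $M$, and hence its dual, with $M^t$ and its dual, it carries the theta-indexing set $P_k\pmod{kM+\C\delta}$ onto $(P_k)^t\pmod{kM^t+\C\delta^t}$, the inclusion $\tau(P^k)\hookrightarrow(P^k)^t=(P_k)^t$ recorded above being the relevant instance. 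Hence $\Theta_\lambda\mapsto\Theta^t_{\tau(\lambda)}$ extends to a linear isomorphism $Th_k\xrightarrow{\sim}Th_k^t$ carrying the theta basis to the theta basis. For $W$-invariance I would use that $\tau$ sends each simple root $\alpha_i$ to a positive multiple of the simple root $\alpha_i^t$ of $\mathfrak{g}^t$ (precisely the sense in which $\tau$ transposes roots and coroots); since a reflection depends only on the line through its root and $\tau$ is an isometry, this gives $\tau s_i\tau^{-1}=s_i^t$ for all $i$, so $\tau$ conjugates $W$ onto $W^t$. Combined with the relation $w\cdot\Theta_\lambda=\Theta_{w(\lambda)}$ and the identity $\tau(\Theta_\lambda)=\Theta^t_{\tau(\lambda)}$, this yields $\tau(w\cdot\Theta_\lambda)=w^t\cdot\tau(\Theta_\lambda)$, the desired equivariance.

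The step I expect to be the main obstacle is the bookkeeping that $\tau$ matches the two indexing sets exactly, not merely up to the translation lattice, because the transpose interchanges the roles of roots and coroots (so $M=\overset{\circ}{Q}$ is carried to the coroot lattice $M^t$ of $\mathfrak{g}^t$, and one must track the corresponding dual lattices). The translation-intertwining identity and the isometry property are exactly the tools that make this matching, and the normalizing factor, come out correctly.
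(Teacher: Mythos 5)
Your proposal is correct and follows essentially the same route as the paper's proof: you verify that the isometry property and the lattice identification $\tau(M)=M^t$ force $\Theta_\lambda\mapsto\Theta^t_{\tau(\lambda)}$, match the indexing sets via the dual-lattice identification (the paper's step $\tau(M^*)=(M^t)^*$ combined with the theta-basis proposition), and identify the Weyl group actions (the paper's ``direct check'' that $W^t=\overset{\circ}{W^t}\ltimes M^t\cong\overset{\circ}{W}\ltimes M$, which you unpack via $\tau s_i\tau^{-1}=s_i^t$). Your version is a more detailed expansion of the same argument, with all the bookkeeping done correctly.
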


\begin{proof}
	A direct check shows that $W^t = \overset{\circ}{W^t}\ltimes M^t \cong \overset{\circ}{W}\ltimes M$ and 
	$$e^{(\lambda,\mu)} = e^{(\tau(\lambda),\tau(\mu))^t}.$$
	Therefore, the map $\Theta_{\lambda}\mapsto \Theta^t_{\tau(\lambda)}$ is an inclusion. Furthermore, since $\tau(M^*)=(M^t)^*$, proposition \ref{theta-basis} implies that $\tau\left(Th_k\right)=Th_k^t$.
\end{proof}

Since the finite Weyl groups of $\mathfrak{g}$ and $\mathfrak{g}^t$ are isomorphic and commute with isometries between their Cartan subalgebras, the space $Th_k^{-}$, which is spanned by the alternants 
$$A_{\lambda}=\sum_{w\in \overset{\circ}{W}}\epsilon(w)\Theta_{w(\lambda)}$$
for $\lambda\in P_k$, can be identified with the space $\left(Th_k^t\right)^{-}.$

\bigskip

At this point, one should notice that $\tau(\rho)\neq \rho^t$ and so, instead of using the map $\tau$ between theta functions given above, consider the map
$$\overset{\bullet}{\tau}:P_k\to P_{k+h^\vee-h}^t$$
$$\lambda\mapsto \tau(\lambda+\rho)-\rho^t$$
and the map it induces on the spaces of characters, denoted by the same symbol,
$$\overset{\bullet}{\tau}:Ch_{k}\to Ch_{k+h^\vee-h}^t$$
$$\chi_{\lambda}\mapsto \chi_{\overset{\bullet}{\tau}(\lambda)}^t.$$ 
Note that this map is a well-defined injection, i.e., $h^\vee-h\geq 0$ and $\overset{\bullet}{\tau}(\lambda)\in P_{k+h^\vee-h}^+$, and that $\overset{\bullet}{\tau}\left(\chi_{\lambda}\right)=A_{\tau(\lambda+\rho)}/A_{\rho^t}$. Furthermore, $\overset{\bullet}{\tau}$ gives a tight relationship between the $S$-matrices of the two Lie algebras. Before describing it, recall that for any $\mathfrak{g}$ of the type considered in this section, the \emph{adjacent algebra} $\mathfrak{g}'$ is given by the above table.
The $S$-matrix of $\mathfrak{g}$, as defined in [Ka], does not act on the space of characters but, rather, is a map between different character spaces. Nonetheless, its image can be described in terms of the characters of the adjacent algebra $\mathfrak{g}'$. For more details, please refer to [Ka,\textsection 13.9]. In that section, Kac discusses a map $\alpha\mapsto \alpha'$ between the Cartan subalgebras of $\mathfrak{g}$ and $\mathfrak{g}'$ that is very similar to the map $\tau$ in this paper.

\begin{propo}
	Let $S(\mathfrak{g})$ be the $S$-matrix acting on the space of theta functions at level $k$ and $S(\mathfrak{g}^t)$ be the analogous $S$-matrix for $\mathfrak{g}^t$ and level $k+h^\vee-h$. Then
	$$S(\mathfrak{g})_{\lambda,\mu} = \left|\overset{\kern -.5em\circ}{Q^\vee}/M\right|^{1/2}S(\mathfrak{g}^t)_{\overset{\bullet}{\tau}(\lambda),\tau(\mu)+(h^\vee-h)\Lambda_0^t}$$
	$\lambda\in P^k,$ $\mu\in P_k.$
\end{propo}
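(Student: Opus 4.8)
The plan is to write both $S$-matrices in the closed form supplied by Kac and to match them entry by entry under $\tau$. Since $\mathfrak{g}^t$ is untwisted, Theorem \ref{untwisted-modular-action} gives, in the alternant normalization,
$$S(\mathfrak{g}^t)_{\nu,\xi}=i^{|\overset{\circ}{\Delta}{}^t_+|}\bigl|(M^t)^*/(k+h^\vee)M^t\bigr|^{-1/2}\sum_{w\in\overset{\circ}{W}{}^t}\epsilon(w)\,e^{-\frac{2\pi i(\overline{\nu}+\overline{\rho^t},\,w(\overline{\xi}+\overline{\rho^t}))^t}{(k+h^\vee-h)+h^{\vee t}}},$$
whereas $S(\mathfrak{g})$ is the twisted transform described in [Ka, \S13.9], whose action on a level-$k$ object of $\mathfrak{g}$ is expressed through the adjacent algebra $\mathfrak{g}'$ and is governed by the form $(\cdot,\cdot)$ and the finite Weyl group $\overset{\circ}{W}$ of $\mathfrak{g}$. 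The first numerical input I would record is $h^{\vee t}=h$: transposing the Cartan matrix interchanges the marks $a_i$ and the comarks $a^\vee_i$, so the affine dual Coxeter number of $\mathfrak{g}^t$ equals the Coxeter number of $\mathfrak{g}$. Hence the denominator for $\mathfrak{g}^t$ collapses to $(k+h^\vee-h)+h=k+h^\vee$, matching that of $\mathfrak{g}$ and explaining the level shift $h^\vee-h$ in the statement.

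Next I would match the exponents. The key observation is that for each of the four rows of the table the adjacent algebra and the transpose algebra share the same underlying finite Lie algebra, $\overset{\circ}{\mathfrak{g}'}=\overset{\circ}{\mathfrak{g}^t}$ (e.g.\ $A^{(2)}_{2\ell-1}$ has $\mathfrak{g}'=D^{(2)}_{\ell+1}$ and $\mathfrak{g}^t=B^{(1)}_\ell$, both with finite part $B_\ell$). Consequently the codomain of the \S13.9 transform is, after applying $\tau$, indexed by weights of $\mathfrak{g}^t$ already carrying the Weyl vector $\overline{\rho^t}$; this is precisely why the second index enters through the plain map $\tau$ corrected only by the level term $(h^\vee-h)\Lambda^t_0$ (note $\overline{\Lambda^t_0}=0$, so this term adjusts the level and leaves the finite part $\tau(\overline{\mu})$ untouched), while the first index enters through $\overset{\bullet}{\tau}$. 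Indeed $\overset{\bullet}{\tau}(\lambda)=\tau(\lambda+\rho)-\rho^t$ gives $\overline{\overset{\bullet}{\tau}(\lambda)}+\overline{\rho^t}=\tau(\overline{\lambda}+\overline{\rho})$, so $\overset{\bullet}{\tau}$ absorbs the domain shift $\overline{\rho}$ (recall $\tau(\rho)\neq\rho^t$, which forces the use of $\overset{\bullet}{\tau}$ rather than $\tau$). With both slots in place, the isometry $e^{(\cdot,\cdot)}=e^{(\tau(\cdot),\tau(\cdot))^t}$ together with the isomorphism $\overset{\circ}{W}\cong\overset{\circ}{W}{}^t$ --- both established in the proof that $Th_k\cong Th_k^t$ --- turns each summand of the $\mathfrak{g}$-sum into the corresponding summand of the $\mathfrak{g}^t$-sum, so the two Weyl sums coincide identically.

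It remains to reconcile the prefactors. The powers of $i$ agree because transposition preserves the number of positive roots, $|\overset{\circ}{\Delta}_+|=|\overset{\circ}{\Delta}{}^t_+|$. The only genuine discrepancy is between the normalizing factors $|M^*/(k+h^\vee)M|^{-1/2}$ for $\mathfrak{g}$ and $|(M^t)^*/(k+h^\vee)M^t|^{-1/2}$ for $\mathfrak{g}^t$: although $\tau$ identifies $M$ with $M^t$ isometrically, in the twisted setting one has $M=\overset{\circ}{Q}$, and the lattice controlling the twisted transform is paired against $\overset{\circ}{Q^\vee}$ rather than against $M$ itself. Transporting each of $M$, $M^*$ and $\overset{\circ}{Q^\vee}$ through $\tau$ and computing the resulting covolume ratio produces exactly the index $|\overset{\circ}{Q^\vee}/M|^{1/2}$. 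I expect this normalization computation to be the main obstacle, since it requires care with how $\tau$ transports the dual lattices and with the fact that the covolume ratio is $|\overset{\circ}{Q^\vee}/M|$ and not $1$; everything else is bookkeeping. Combining the matched Weyl sums, the equal signs, and this index factor yields the asserted identity.
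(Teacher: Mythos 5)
Your overall architecture coincides with the paper's proof: write both matrices in closed form, match the denominators via $h^{\vee t}=h$ (so $(k+h^\vee-h)+h^{\vee t}=k+h^\vee$), match the Weyl sums term by term using the isometry $e^{(\cdot,\cdot)}=e^{(\tau(\cdot),\tau(\cdot))^t}$ and $\overset{\circ}{W}\cong\overset{\circ}{W}{}^t$, absorb the first-slot $\rho$-shift via $\overline{\overset{\bullet}{\tau}(\lambda)}+\overline{\rho}{}^t=\tau(\overline{\lambda}+\overline{\rho})$, and recognize that the second-slot shift in the twisted transform is the adjacent algebra's Weyl vector, with $\tau(\overline{\rho}{}')=\overline{\rho}{}^t$ (your observation $\overset{\circ}{\mathfrak{g}}{}'\cong\overset{\circ}{\mathfrak{g}}{}^t$ is the right reason this works); the powers of $i$ agree since transposition preserves $|\overset{\circ}{\Delta}_+|$. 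All of this is exactly the comparison carried out in the paper.

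The genuine gap is your treatment of the prefactor $|\overset{\circ}{Q}{}^\vee/M|^{1/2}$, which is the only part of the statement that is not bookkeeping. You propose to extract it as a ``covolume ratio'' from a discrepancy between $|M^*/(k+h^\vee)M|^{-1/2}$ and $|(M^t)^*/(k+h^\vee)M^t|^{-1/2}$. But there is no discrepancy: as you yourself note, $\tau|_M$ is an isometric lattice isomorphism $M\cong M^t$, hence $\tau(M^*)=(M^t)^*$, the two indices are literally equal, and your proposed computation returns $1$, not the asserted index. In the paper the factor is not derived from any such comparison; it is already present in Kac's \S 13.9 formula for the twisted transform, which reads
$$S(\mathfrak{g})_{\lambda,\mu}=i^{|\overset{\circ}{\Delta}_+|}\,|M^*/(k+h^\vee)M|^{-1/2}\,|M'/M|^{1/2}\sum_{w\in \overset{\circ}{W}}\epsilon(w)\,e^{-\frac{2\pi i\,(w(\overline{\lambda}+\overline{\rho}),\,\overline{\mu}+\overline{\rho}{}')}{k+h^\vee}},$$
where $M'$ is the translation lattice of the adjacent algebra $\mathfrak{g}'$ and $M'=\overset{\circ}{Q}{}^\vee$; the index $|M'/M|^{1/2}=|\overset{\circ}{Q}{}^\vee/M|^{1/2}$ arises when the theta-function transformation over $M^*$ is regrouped into alternants of $\mathfrak{g}'$, i.e., it is intrinsic to the definition of $S(\mathfrak{g})$, not something transported through $\tau$. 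Since you explicitly flag this normalization as ``the main obstacle'' and leave it uncomputed, and since the route you sketch for it would fail, the proposal as written does not establish the stated factor, even though everything else is sound and agrees with the paper's argument.
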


\begin{proof}
	The result follows directly from the dictionary between the map given in [Ka] and the construction of the two maps $\tau$ and $\overset{\bullet}{\tau}$ in this paper. Since $\overline{\rho}^t=\tau(\overline{\rho}')$ and $M'=\overset{\kern -.5em\circ}{Q^\vee}$, comparing the two quantities below 
	$$S(\mathfrak{g})_{\lambda,\mu}=i^{|\overset{\circ}{\Delta}_+|}|M^*/(k+h^\vee)M|^{-1/2}|M'/M|^{1/2}\sum_{w\in \overset{\circ}{W}}\epsilon(w)e^{-\frac{2\pi i (w(\overline{\lambda}+\overline{\rho}),\overline{\mu}+\overline{\rho}')}{k+h^\vee}}$$
	$$S(\mathfrak{g}^t)_{\overset{\bullet}{\tau}(\lambda),\tau(\mu)+(h^\vee-h)\Lambda_0^t}=i^{|\overset{\circ}{\Delta^t}_+|}|\left(M^t\right)^*/(k+h^\vee)M^t|^{-1/2}\sum_{w\in \overset{\circ}{W}}\epsilon(w)e^{-\frac{2\pi i (w(\tau(\overline{\lambda}+\overline{\rho})),\tau(\overline{\mu})+\overline{\rho}^t)}{k+h^\vee}}$$
	gives the result.
\end{proof}

\bigskip

Theorem \ref{min-S-module} asserts that the space $V$ is equal to $Th_{k+h^\vee}^{t-}$. Notice that $Ch_{k+h^\vee-h}^t\cong Th_{k+h^\vee}^{t-}$ via the map $\chi^t_{\lambda}\mapsto \chi^t_{\lambda}A_{\rho^t}$. Using this isomorphism, we can study the action of $S$ on $Th_{k+h^\vee}^{t-}$ and prove the theorems \ref{min-S-module} and \ref{twisted-untwisted-isomorphism}.

\bigskip

\begin{proof}[Proof of \ref{min-S-module}]
	The space $V$ is a subspace of $Th_{k+h^\vee}^{t-}$. The action of $S$ on $Th_{k+h^\vee}^{t-}$ differs from its action on $Ch_{k+h^\vee-h}^t$ by a nonzero scalar, see Theorem \ref{untwisted-modular-action}. The $S$-matrix of $Ch_{k+h^\vee-h}^t$ has $S_{k\Lambda_0,,\lambda}=S_{\lambda,k\Lambda_0}\neq 0$ for all $\lambda \in P_k^{t+}$ (see Theorem \ref{untwisted-modular-action}) and so the coefficient of $A_{\rho^t}$ in the expansion of $A_{\lambda+\rho}|_S$ (with resect to the alternant basis) is nonzero. It follows that $A_{\rho^t}\in V^-$. Similarly, for any $\mu\in P_k^{t+}$ the coefficient of $A_{\mu+\rho^t}$ in the expansion of $A_{\rho^t}|_S$ is nonzero. It follows that $Th_{k+h^\vee}^{t-}\subseteq V$.
	
	Note that $\widetilde{\rho}=\rho^t$ and that the $S$ matrix has nonzero entries in the column corresponding to $A_{\widetilde{\rho}}$. Therefore, we can use the Verlinde formula to give $V$ the structure of a commutative associative algebra as discussed in section \ref{introduction}.
\end{proof}

\bigskip

Theorem \ref{twisted-untwisted-isomorphism} is a direct consequence of above proof. Using the Verlinde algebra $V_{k+h^\vee-h}^t$ of $\mathfrak{g}^t$ at level $k+h^\vee-h$, one can study the action of the modular group on $Ch_k(\mathfrak{g})$ directly. 

\bigskip

\begin{theorem}
	\label{gamma1r-morphism}
	Let $\mathfrak{g}$ be a twisted affine Lie algebra, $\lambda \in P^k$, and $X\in \Gamma_1(r)$ then
	$$\tau\left(A_{\lambda+\rho}|_{X}\right)=A^t_{\tau(\lambda+\rho)}|_{X}.$$
	Furthermore, there is a nonzero scalar $v(k,X)$ such that
	$$\overset{\bullet}{\tau}\left(\chi_{\lambda}|_{X}\right)=v(k,X)\overset{\bullet}{\tau}(\chi_{\lambda})|_{X}.$$
\end{theorem}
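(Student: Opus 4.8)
The plan is to prove both assertions by reducing the $\Gamma_1(r)$-action to a small generating set and then checking the intertwining generator-by-generator, using the two earlier propositions (the $\overset{\circ}{W}$-invariant identification $Th_k\cong Th_k^t$ and the comparison of $S$-matrices) as the only nontrivial inputs. Concretely, I would take as generators of $\Gamma_1(r)$ the translation $T$ and the element $u_{21}^r=\begin{pmatrix}1&0\\r&1\end{pmatrix}=ST^{-r}S^{-1}$ (together with $-I$ when $r=2$), so that it suffices to establish $\tau\bigl(A_{\lambda+\rho}|_X\bigr)=A^t_{\tau(\lambda+\rho)}|_X$ for $X\in\{T,\,u_{21}^r,\,-I\}$ and $\lambda\in P^k$. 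Since $\tau$ is already known to carry $A_{\lambda+\rho}$ to $A^t_{\tau(\lambda+\rho)}$, each case reduces to comparing how the given generator acts on the two alternants.

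For the first equation, when $X=T$ the action on an alternant is diagonal, $A_{\lambda+\rho}|_T=e^{\pi i(\lambda+\rho,\lambda+\rho)/(k+h^\vee)}A_{\lambda+\rho}$, the eigenvalue being fixed by the norm and the level because every Weyl-translate $\Theta_{w(\lambda+\rho)}$ carries the same phase. As $\tau$ is an isometry with $\tau(\delta)=\delta^t$, it preserves both $(\lambda+\rho,\lambda+\rho)$ and the level $k+h^\vee$, so the two $T$-eigenvalues coincide and the intertwining is exact. For $X=-I$ (relevant only when $r=2$) the action is charge conjugation $A_{\lambda+\rho}\mapsto A_{(\lambda+\rho)^*}$, which commutes with $\tau$ because $\tau$ is built from the root datum and hence intertwines the two diagram involutions; this again gives exact intertwining. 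The essential case is $X=u_{21}^r=ST^{-r}S^{-1}$, where I use the proposition comparing the two $S$-matrices, read as the relation $\tau\circ(\,\cdot\,|_S)=c\,(\,\cdot\,|_S)\circ\tau$ between the two $S$-maps with $c=\bigl|\overset{\kern -.5em\circ}{Q^\vee}/M\bigr|^{1/2}$, and correspondingly $\tau\circ(\,\cdot\,|_{S^{-1}})=c^{-1}(\,\cdot\,|_{S^{-1}})\circ\tau$. Composing along $ST^{-r}S^{-1}$ and inserting the (scalar $1$) $T$-step in the middle, the factors $c$ and $c^{-1}$ cancel, yielding $\tau\bigl(A_{\lambda+\rho}|_{u_{21}^r}\bigr)=A^t_{\tau(\lambda+\rho)}|_{u_{21}^r}$ with trivial scalar. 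Since $T$, $u_{21}^r$ (and $-I$) generate $\Gamma_1(r)$, the first equation follows for all $X\in\Gamma_1(r)$.

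For the second equation I would pass from alternants to characters through $\chi_\lambda=A_{\lambda+\rho}/A_\rho$ and $\overset{\bullet}{\tau}(\chi_\lambda)=A^t_{\tau(\lambda+\rho)}/A^t_{\rho^t}$. The raw modular pullback is an algebra map, and $A_\rho$ spans the one-dimensional level-$h^\vee$ alternant space, so $A_\rho|_X=\mu(X)s(X)A_\rho$ for a universal automorphy factor $\mu(X)$ and a vacuum scalar $s(X)$; the analogous statement holds for $A^t_{\rho^t}$ with scalar $s^t(X)$. Writing $A_{\lambda+\rho}|_X$ as its automorphy factor times an alternant-matrix $M^X$ and dividing, the automorphy factors combine into the level-$k$ factor appropriate to characters, so $\chi_\lambda|_X=s(X)^{-1}\sum_\nu M^X_{\lambda\nu}\chi_\nu$, and similarly for $\mathfrak{g}^t$. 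The first equation identifies $M^X$ with the corresponding $\overset{\bullet}{\tau}$-block of the $\mathfrak{g}^t$-matrix, and this block is genuinely invariant for $X\in\Gamma_1(r)$ by Theorem \ref{min-S-module}; applying $\overset{\bullet}{\tau}$ then produces exactly $\overset{\bullet}{\tau}(\chi_\lambda)|_X$ up to the $\lambda$-independent factor $s^t(X)/s(X)$, giving the claim with $v(k,X)=s^t(X)/s(X)$.

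The delicate point, as the text stresses, is that $S$ itself is not an endomorphism of the character space of the twisted $\mathfrak{g}$ but a map between different spaces; so $ST^{-r}S^{-1}$ must be interpreted as the honest $\Gamma_1(r)$-action, and the $S$-matrix proposition must be used to track where each intermediate step lands and to confirm that the composite returns to the image of $\tau$. Keeping the automorphy factors and the denominator $A_\rho$ straight — so that $v(k,X)=s^t(X)/s(X)$ emerges as a scalar independent of $\lambda$ while the matrix parts match exactly — is where the bookkeeping is heaviest, and the invariance of the relevant block under $\Gamma_1(r)$, guaranteed by Theorem \ref{min-S-module}, is what makes the restricted action agree with the full $SL(2,\Z)$-action on $\mathfrak{g}^t$.
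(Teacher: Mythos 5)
Your first identity is correct but reached by a detour, and one intermediate claim in it is misstated. The paper's own proof is a single observation: $\tau|_M\colon M\to M^t$ is an isometric isomorphism of lattices preserving levels, so $\tau$ induces an isomorphism $Th_{k+h^\vee}\cong Th^t_{k+h^\vee}$ that commutes with the \emph{entire} metaplectic action at once; since $\tau$ intertwines the finite Weyl groups, alternants go to alternants and $\tau\left(A_{\lambda+\rho}|_{X}\right)=A^t_{\tau(\lambda+\rho)}|_{X}$ holds for every $X$ with no generator-by-generator analysis. Your asserted relation $\tau\circ(\,\cdot\,|_S)=c\,(\,\cdot\,|_S)\circ\tau$ with $c=\left|\overset{\circ}{Q}{}^\vee/M\right|^{1/2}$ conflates the intrinsic action of $S$ on theta functions (which commutes with $\tau$ with scalar exactly $1$, for the lattice-isometry reason above) with Kac's \emph{normalized} twisted $S$-matrix, whose extra factor $|M'/M|^{1/2}$ is a bookkeeping convention relating different character spaces, not part of the theta action. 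In your composition $ST^{-r}S^{-1}$ the spurious $c$'s cancel, so the conclusion survives, but the single-$S$ claim as written is false.

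The genuine gap is in your second identity. You derive $A_\rho|_X=\mu(X)s(X)A_\rho$ from the assertion that ``$A_\rho$ spans the one-dimensional level-$h^\vee$ alternant space.'' The $\overset{\circ}{W}$-anti-invariant subspace of $Th_{h^\vee}$ is stable under the full metaplectic group, so if it were one-dimensional, $A_\rho$ would be an eigenvector for all of $SL(2,\Z)$ --- which is false precisely for the twisted algebras in question: the alternant basis is indexed by \emph{regular $W$-orbits on $P_{h^\vee}\pmod{h^\vee M+\C\delta}$}, and when $r>a_0$ (so $M=\overset{\circ}{Q}$ and $P^{h^\vee}\subsetneq P_{h^\vee}$) there are several such orbits; indeed the paper's $S$-matrix proposition shows $A_\rho|_S$ involves alternants attached to the adjacent algebra $\mathfrak{g}'$, not a multiple of $A_\rho$. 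So the one-dimensionality argument fails exactly where you need it, and if it worked it would contradict the paper's basic premise that $S$ moves the twisted character space out of itself. The fact actually required --- $A_\rho|_X\in\C A_\rho\setminus\{0\}$ and $A_{\rho^t}|_X\in\C A_{\rho^t}\setminus\{0\}$ for $X\in\Gamma_1(r)$, which also underwrites your expansion of $A_{\lambda+\rho}|_X$ in the alternants $A_{\mu+\rho}$, $\mu\in P^{k+}$ --- is a nontrivial input from [Ka] ($\Gamma_1(r)$-stability of the twisted character spaces), and is not supplied by Theorem \ref{min-S-module}, which only identifies the $SL(2,\Z)$-closure and says nothing about $\Gamma_1(r)$ preserving the $\tau$-image block. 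Once that input is granted, your bookkeeping giving $v(k,X)=s^t(X)/s(X)$ coincides with the paper's proof, which deduces the second identity from exactly these two eigenvector facts.
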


\begin{proof}
	$\tau$ is an isometry on the underlying Cartan subalgebras and induces an isomorphism between the spaces of theta functions $Th_{k+h^\vee}$ and $Th_{k+h^\vee}^t$. Therefore, the modular group action commutes with $\tau$. The second identity follows from the fact that $A_{\rho}|_{X}\in \C A_{\rho}\setminus\{0\}$ and $A_{\rho^t}|_{X}\in \C A_{\rho^t}\setminus\{0\}$.
\end{proof}

\bigskip

Let us investigate the structure of $V_k$ relative $Ch_k$. The grading structure (introduced in \ref{Verlinde-grading}) on $V_k(\mathfrak{g})$, where $\mathfrak{g}$ is a twisted affine Lie algebras different from type $A_{2\ell}^{(2)}$, is easy to describe 
$$V_k(A^{(2)}_{2\ell-1})=V_0\oplus V_1,\ \ V_k(D^{(2)}_{\ell+1})=V_0\oplus V_1$$
$$V_k(E^{(2)}_{6})=V_0,\ \ V_k(D^{(3)}_{4})=V_0.$$
Using the grading and comparing it to the map $\overset{\bullet}{\tau}$, it can be shown that the image of $Ch_k(\mathfrak{g})$ lies in $V_{\tau(\overline{\rho})-\overline{\rho}^t}$ for $\mathfrak{g}$ of type $A_{2\ell-1}^{(2)}$. The image of $Ch_k$ for type $D^{(2)}_{\ell+1}$ is generally not concentrated in a homogeneous component.

\bigskip

\begin{propo}
	If $\mathfrak{g}$ is of type $A_{2\ell-1}^{(2)}$, then $V_k=V_0\oplus V_1$ and $\overset{\bullet}{\tau}\left(Ch_k\right)=V_1.$
\end{propo}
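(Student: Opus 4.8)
The plan is to identify the image of the known injection $\overset{\bullet}{\tau}$ by computing the grading class of each $\overset{\bullet}{\tau}(\chi_\lambda)$ and then matching the two index sets. Since $\mathfrak{g}^t=B_\ell^{(1)}$ we have $\overset{\circ}{\mathfrak{g}}{}^t=B_\ell$, so $\mathcal{A}=\overset{\circ}{P}{}^t/\overset{\circ}{Q}{}^t\cong\Z_2$ with representatives $\{0,\overline{\Lambda}_\ell^t\}$. Working inside $V_k\cong V_{k+1}(\mathfrak{g}^t)$, whose underlying space is $Ch_{k+1}^t$ (Theorem \ref{twisted-untwisted-isomorphism}, where $k+h^\vee-h=k+1$ for this type), the grading of Theorem \ref{Verlinde-grading} gives $V_k=V_0\oplus V_1$, and a basis vector $\chi_\nu^t$ lies in $V_a$ exactly when the last Dynkin label $\langle\overline{\nu},\alpha_\ell^{\vee t}\rangle$ has parity $a$. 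As $\overset{\bullet}{\tau}$ is already a well-defined injection $Ch_k\hookrightarrow Ch_{k+1}^t$, it remains only to show that its image lands in $V_1$ and that it exhausts $V_1$.

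For the first, I would compute the Dynkin labels of the finite part $\tau(\overline{\lambda}+\overline{\rho})-\overline{\rho}^t$ of $\overset{\bullet}{\tau}(\lambda)$. Using $\tau(\overline{\Lambda}_i)=\frac{a_i^\vee}{a_i}\overline{\Lambda}_i^t$ and $\langle\overline{\rho}^t,\alpha_j^{\vee t}\rangle=1$ one obtains
$$\langle\tau(\overline{\lambda}+\overline{\rho})-\overline{\rho}^t,\alpha_j^{\vee t}\rangle=(m_j+1)\frac{a_j^\vee}{a_j}-1,\qquad m_j:=\langle\overline{\lambda},\alpha_j^\vee\rangle.$$
The marks and comarks of $A_{2\ell-1}^{(2)}$ satisfy $a_j^\vee/a_j=1$ for $j<\ell$ and $a_\ell^\vee/a_\ell=2$ at the node that $\tau$ carries to the spin node of $B_\ell$, so the label vector is $(m_1,\ldots,m_{\ell-1},2m_\ell+1)$. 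Because the weights indexing $Ch_k$ are dominant integral, their finite parts $\overline{\lambda}$ are dominant integral $C_\ell$-weights and each $m_j\in\Z_{\ge0}$; hence the last label $2m_\ell+1$ is odd and $\overset{\bullet}{\tau}(\chi_\lambda)\in V_1$. This identifies the class $\tau(\overline{\rho})-\overline{\rho}^t$ of the preceding discussion with the nontrivial class.

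For the second, I would observe that $(m_1,\ldots,m_\ell)\mapsto(m_1,\ldots,m_{\ell-1},2m_\ell+1)$ is a bijection from dominant integral $C_\ell$-weights onto class-$1$ dominant integral $B_\ell$-weights, and then check that it respects levels. With the comarks $(1,1,2,\ldots,2,2)$ of $A_{2\ell-1}^{(2)}$ and $(1,1,2,\ldots,2,1)$ of $B_\ell^{(1)}$, the level-$k$ constraint $m_1+2m_2+\cdots+2m_\ell\le k$ becomes, after the substitution $p_\ell=2m_\ell+1$, the inequality $p_1+2p_2+\cdots+2p_{\ell-1}+p_\ell\le k+1$, which is precisely the level-$(k+1)$ constraint for $B_\ell^{(1)}$. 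Thus $\overset{\bullet}{\tau}$ carries the basis of $Ch_k$ bijectively onto the set of $\chi_\nu^t$ with $\overline{\nu}$ in class $1$, i.e. onto the basis of $V_1$, and combined with the inclusion this yields $\overset{\bullet}{\tau}(Ch_k)=V_1$.

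The main obstacle is the numerical bookkeeping in this last step: one needs the marks and comarks of $A_{2\ell-1}^{(2)}$ and $B_\ell^{(1)}$ exactly right, and must verify that the single extra unit of level produced by the shifts $\rho,\rho^t$ (the passage from level $k$ to $k+1$) is exactly absorbed by the drop of the node-$\ell$ comark from $2$ to $1$ under transposition. This is the same ratio $a_\ell^\vee/a_\ell=2$ at the transposed spin node that forces the odd last label in the inclusion step and aligns the two level inequalities in the surjectivity step, so pinning this coincidence down is the crux of the argument.
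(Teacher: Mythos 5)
Your proof is correct, and it follows the same skeleton as the paper's own argument: both identify the grading class of the image from the shift $\tau(\overline{\rho})-\overline{\rho}^t=\overline{\Lambda}_\ell^t$ together with the description $\overset{\circ}{Q}{}^t=\langle\overline{\Lambda}_1^t,\ldots,\overline{\Lambda}_{\ell-1}^t,2\overline{\Lambda}_\ell^t\rangle$, i.e.\ the class in $\overset{\circ}{P}{}^t/\overset{\circ}{Q}{}^t\cong\Z_2$ is the parity of the spin-node label, which is exactly your computation that the last Dynkin label of $\tau(\overline{\lambda}+\overline{\rho})-\overline{\rho}^t$ equals $2m_\ell+1$. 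Where you diverge is in establishing the equality $\overset{\bullet}{\tau}(Ch_k)=V_1$ rather than just the inclusion: the paper disposes of this in one line by invoking the self-duality of the lattice $\overset{\circ}{Q}{}^t(A^{(2)}_{2\ell-1})$, whereas you verify it by hand, showing that $(m_1,\ldots,m_\ell)\mapsto(m_1,\ldots,m_{\ell-1},2m_\ell+1)$ is a bijection of index sets and that the comark bookkeeping --- $(1,1,2,\ldots,2,2)$ for $A^{(2)}_{2\ell-1}$ against $(1,1,2,\ldots,2,1)$ for $B^{(1)}_\ell$ --- converts the level-$k$ inequality $m_1+2m_2+\cdots+2m_\ell\le k$ exactly into the level-$(k+1)$ inequality, with the extra unit of level supplied by the $\rho$-shift being absorbed by the drop of the node-$\ell$ comark (your correct identification of $h^\vee-h=1$ for this type). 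Your version is more elementary and self-contained, and in effect substantiates the paper's rather terse appeal to self-duality by exhibiting the basis-to-basis bijection explicitly, at the cost of type-specific numerics; the paper's lattice-theoretic phrasing is shorter and indicates more conceptually why the index sets must match. All of your numerical inputs (the ratio $a_\ell^\vee/a_\ell=2$ at the transposed spin node, the marks and comarks, and the parity criterion for membership in $\overset{\circ}{Q}(B_\ell)$) check out.
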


\begin{proof}
	The first part is clear. For the second part, note that
	$$\overset{\circ}{Q}{}^t(A_{2\ell-1}^{(2)})=\overset{\circ}{Q}(B_{\ell}^{(1)})=\langle \Lambda_1^t,\ldots,\Lambda_{\ell-1}^t,2\Lambda_{\ell}^t\rangle \text{ and }\tau(\rho)-\rho^t=\Lambda_{\ell}^t,$$
	where $\langle,\rangle$ means `spanned by'. Therefore, $V_{\tau(\overline{\rho})-\overline{\rho}^t}=V_1$. The lattice $\overset{\kern -.2em\circ}{Q^t}(A_{2\ell-1}^{(2)})$ is self-dual which implies the equality
	$$\overset{\bullet}{\tau}\left(Ch_k\right)= V_{\tau(\overline{\rho})-\overline{\rho}^t}^t=V_1 .$$
\end{proof}

\section{Verlinde Algebras for $A^{(2)}_{2\ell}$}
\label{r=a_0=2 case}

The case when $\mathfrak{g}$ is of type $A^{(2)}_{2\ell}$ has fascinating features that differ substantially from the other cases. On one hand, at \emph{odd} integer level $k$ the space of characters has the structure of a fusion algebra that is isomorphic to the Verlinde algebra for $C_{\ell}^{(1)}$ at the level $\frac{k-1}{2}$ [Ho2]. (A proof is included here.) On the other hand, more interesting features reveal themselves when the level is an \emph{even} integer. Among them is the fact that the structure constants are no longer non-negative integers.

\bigskip

With definition \ref{twisted-verlinde-definition}, it is easy to check that the fusion rules are well-defined and define a commutative associative unital algebra with integral structure constants. Similar to the untwisted Verlinde algebras, one can define an algebraic structure on the representation ring of $\mathfrak{g}$ via a map from the representation ring of $\overset{\circ}{\mathfrak{g}}$. Unlike the untwisted case, the fusion rules can only be shown to be non-negative at odd level.

\begin{propo}
	\label{A-2-even-odd-level-iso}
	Let $\mathfrak{g}$ be of type $A^{(2)}_{2\ell}$ and fix the level $2k+1$, where $k\in \N,$ then the Verlinde algebra of $\mathfrak{g}$ is isomorphic to the Verlinde algebra of the untwisted affine Lie algebra of type $C_{\ell}^{(1)}$ at level $k$.
\end{propo}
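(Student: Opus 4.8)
The plan is to reduce everything to a comparison of $S$-matrices. Recall from the discussion preceding (\ref{verlinde-formula-intro}) that the structure constants produced by the Verlinde formula depend on the $S$-matrix only up to an overall nonzero scalar. Consequently, to exhibit an isomorphism $V_{2k+1}(A^{(2)}_{2\ell})\cong V_k(C_\ell^{(1)})$ it suffices to produce a bijection $\psi$ between the indexing sets $P^+_{2k+1}(A^{(2)}_{2\ell})\text{ mod }\C\delta$ and $P^+_k(C_\ell^{(1)})\text{ mod }\C\delta$, carrying the distinguished weight (the identity element) to the distinguished weight, together with a single nonzero constant $c$ such that $S_{\lambda\mu}=c\,S_{\psi(\lambda)\psi(\mu)}$ for all $\lambda,\mu$. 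Given such data the two families of fusion rules coincide termwise, and $\psi$ extends linearly to the desired algebra isomorphism (which, being induced by the $S$-matrices, also intertwines the $SL(2,\Z)$-actions up to scalar).

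First I would write out both $S$-matrices using the Kac--Peterson formula of Theorem \ref{untwisted-modular-action}, which applies to $A^{(2)}_{2\ell}$ as well as to the untwisted $C_\ell^{(1)}$. The decisive numerical input is the coincidence of shifted levels: for $A^{(2)}_{2\ell}$ one has $h^\vee=2\ell+1$, so the shifted level at level $2k+1$ is $(2k+1)+(2\ell+1)=2(k+\ell+1)$, which is exactly twice the shifted level $k+h^\vee(C_\ell)=k+\ell+1$ of $C_\ell^{(1)}$ at level $k$. Moreover the two finite Weyl groups agree: both $\overset{\circ}{W}$ are the hyperoctahedral group of rank $\ell$. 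These two facts are what make a termwise comparison of the alternating exponential sums possible.

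Next I would construct $\psi$ from an identification $\iota$ of the finite Cartan duals of the two algebras: a linear isomorphism intertwining the two $\overset{\circ}{W}$-actions under which the invariant form of $A^{(2)}_{2\ell}$ becomes twice that of $C_\ell$, the Weyl vector maps $\overline{\rho}\mapsto\overline{\rho}_{C_\ell}$, and the level-$(2k+1)$ dominant alcove maps bijectively onto the level-$k$ dominant alcove. Granting such an $\iota$, the exponent
$$-\frac{2\pi i\,(\overline{\lambda}+\overline{\rho},\,w(\overline{\mu}+\overline{\rho}))}{2(k+\ell+1)}$$
for $A^{(2)}_{2\ell}$ transforms, upon applying $\iota$ and using that the form doubles while the denominator doubles, into the corresponding exponent $-2\pi i(\iota(\overline{\lambda}+\overline{\rho}),\,w\,\iota(\overline{\mu}+\overline{\rho}))_{C_\ell}/(k+\ell+1)$ for $C_\ell^{(1)}$. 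Hence the $\overset{\circ}{W}$-alternating sums match entry for entry, while the remaining scalar prefactors in the Kac--Peterson formula differ only by a factor independent of $\lambda$ and $\mu$, supplying the constant $c$. Setting $\psi=\iota$ on weights then completes the comparison.

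The main obstacle is the simultaneous verification of all the properties required of $\iota$, and this is precisely where the odd-level hypothesis enters. Writing both root systems in standard $\{e_i\}$-coordinates, I would take $\iota$ to be the rescaling sending the roots of $A^{(2)}_{2\ell}$ to those of $C_\ell$, and then check the three conditions---$\overset{\circ}{W}$-equivariance, the factor $2$ on the form, and the alcove bijection together with the image of $\overline{\rho}$---by directly comparing the defining inequalities. The delicate point is that the doubling of the shifted level yields a clean bijection between the level-$(2k+1)$ and level-$k$ alcoves, with $\overline{\rho}$ correctly matched, only because $2k+1$ is odd; at even level the image of the alcove is displaced by a half-integral amount and no such bijection of weights exists. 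A dimension count already signals this: the number of level-$(2k+1)$ weights of $A^{(2)}_{2\ell}$ equals the number of level-$k$ weights of $C_\ell^{(1)}$, whereas at even level the count matches no integral untwisted level---consistent with the genuinely different even-level behavior analyzed in the remainder of the section.
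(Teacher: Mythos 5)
Your core argument is essentially the paper's own proof: the paper also compares the two Kac--Peterson $S$-matrices directly, using the isometries $\varphi:\alpha\mapsto\sqrt{2}\alpha$ and $\varphi^*:\alpha\mapsto\frac{1}{\sqrt{2}}\alpha$ to implement exactly your identification $\iota$ (the form of $A^{(2)}_{2\ell}$ is twice that of $C_\ell$, the shifted level $2k+2\ell+2$ is twice $k+\ell+1$, the finite Weyl groups coincide, and $\sqrt{2}M=\varphi(M')$, $\frac{1}{\sqrt{2}}M^*=\varphi^*(M'^*)$ make even the prefactors $|M^*/(2k+2\ell+2)M|^{-1/2}$ and $|M'^*/(k+\ell+1)M'|^{-1/2}$ equal, so the matrices agree on the nose up to a permutation, with your scalar $c=1$).

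One side remark in your closing paragraph is wrong, though it does not affect the proof. You claim that at even level the weight count ``matches no integral untwisted level'' and that no bijection of weights exists; in fact the paper proves the lemma $\overline{P}^{2n}_+=\overline{P}^{2n+1}_+$, so at level $2n$ the dominant weights of $A^{(2)}_{2\ell}$ are in label-preserving bijection with the level-$n$ dominant weights of $C^{(1)}_\ell$ (the condition is $\sum_i c_i\leq\lfloor m/2\rfloor$ in both parities). What actually fails at even level is the exponent matching: the shifted level $2n+2\ell+1$ is odd, so after halving the doubled form one would need $C^{(1)}_\ell$ at the half-integral shifted level $n+\ell+\frac{1}{2}$ --- this is the correct version of your ``half-integral displacement'' heuristic, whereas the dimension count gives no obstruction at all.
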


\begin{proof}
	It can be checked directly that the $S$-matrices of both affine Lie algebras are equal, up to conjugation by a permutation matrix. More explicitly, if $\mathfrak{g}'$ is of type $C_{\ell}^{(1)}$ with Cartan subalgebra $\mathfrak{h}'$, 
	$$2(\alpha_i^\vee,\alpha_j^\vee)=(\alpha_i^{\vee'},\alpha_j^{\vee'})'$$
	$$\frac{1}{2}(\overline{\Lambda}_i,\overline{\Lambda}_j)=(\overline{\Lambda}_i',\overline{\Lambda}_j')'$$
	for $1\leq i,j\leq \ell$. Therefore, we can define isometries
	$$\varphi:\overset{\circ}{\mathfrak{h}'}\to \overset{\circ}{\mathfrak{h}}$$
	$$\alpha\mapsto \sqrt{2}\alpha$$
	and the dual map
	$$\varphi^*:\overset{\circ}{\mathfrak{h}^{'*}}\to \overset{\circ}{\mathfrak{h}^*}$$
	$$\alpha\mapsto \frac{1}{\sqrt{2}}\alpha$$
	such that $\langle \varphi(\alpha),\varphi^*(\lambda)\rangle=\langle \alpha,\lambda\rangle$. Note that $\sqrt{2}M=\varphi(M')$ and $\frac{1}{\sqrt{2}}M^*=\varphi^*(M'^{*})$, therefore 
	$$|M'^*/(k+\ell+1)M'|=|M^*/(2k+2\ell+2)M|.$$
	The $S$-matrix of $\mathfrak{g}'$ at level $k$ is defined by
	$$S_{\lambda\mu}=i^{|\overset{\circ}{\Delta}_+|}|M'^*/(k+\ell+1)M'|^{-1/2}\sum_{w\in \overset{\circ}{W}}\epsilon(w)e^{-\frac{2\pi i (w(\overline{\lambda}+\overline{\rho}),\overline{\mu}+\overline{\rho})'}{k+\ell+1}}$$
	while the $S$-matrix of $\mathfrak{g}$ at level $2k+1$ is defined by
	$$S_{\lambda\mu}=i^{|\overset{\circ}{\Delta}_+|}|M^*/(2k+2\ell+2)M|^{-1/2}\sum_{w\in \overset{\circ}{W}}\epsilon(w)e^{-\frac{2\pi i (w(\overline{\lambda}+\overline{\rho}),\overline{\mu}+\overline{\rho})}{2k+2\ell+2}}.$$
	Since the structure constants of the Verlinde algebras are computed by the coefficients of the corresponding $S$-matrices, the algebras are isomorphic.
\end{proof}

In the cases where the level is even, one can observe that the fusion rules are often negative. Here is an example of this phenomenon. Let $\mathfrak{g}$ be of type $A_2^{(2)}$ and fix a level $2n,$ $n>0$. The underlying simple Lie algebra is isomorphic to $\mathfrak{sl}_2$. Set $\alpha=\alpha_1,$ $\alpha^\vee=\alpha_1^\vee, \overline{\Lambda}_1=\Lambda$ then
$$\theta=\alpha,\ \theta^\vee = \alpha^\vee,\ \text{and }\overline{\rho}=\alpha^\vee.$$
Note also that $h^\vee=3.$ The of dominant integral weights of level $2n$ project to
$$\overline{P}^{2n}_+=\{m\Lambda\}_{m=0}^{n}.$$
The set of dominant integral weights $P_+$ of $\mathfrak{sl}_2$ is indexed by $\N$ and the map between character spaces is
$$F_{2n}:\{\chi_m\}_{m\in P_+}\to \{\chi_{\lambda}\}_{\lambda\in P_+^{2n}}$$
$$\chi_{m}\mapsto \left\{\begin{matrix}
\chi_{m'\Lambda} & \text{ if }0\leq m'\leq n \text{ and }m\equiv m'\pmod{2n+h^\vee}\\
0 & \text{if }m\equiv -1\pmod{2n+h^\vee}\\
-\chi_{m'\Lambda} & \text{ if }0\leq m'\leq n \text{ and }m+m'+2\equiv 0\pmod{2n+h^\vee}
\end{matrix}\right. .$$
The proof of the above map follows from the fact that the affine Weyl group of type $A^{(2)}_{2\ell}$ is $S_2\ltimes \Z$ and so, for any $m\in \N$, there is a unique $m'$ modulo $2n+h^\vee$ satisfying the conditions in the piecewise definition of $F_{2n}$. 
More importantly, there is a canonical section 
$$S_{2n}:\{\chi_{\lambda}\}_{\lambda\in P_+^{2n}}\to\{\chi_{m}\}_{m\in P_+}$$
$$\chi_{m\Lambda}\mapsto \chi_{m}$$
and the fusion rules $N_{ab}^{c}$ are given by the coefficients of the irreducible character $\chi_{c}$ in
$$F_{2n}(S_{2n}(\chi_{a\Lambda})\otimes S_{2n}(\chi_{b\Lambda})).$$
Th product $S_{2n}(\chi_{a\Lambda})\otimes S_{2n}(\chi_{b\Lambda})$ is the direct sum of $\chi_{c\Lambda}$ where $|a-b|\leq c\leq a+b$ and $a+b+c\equiv 0\pmod{2}$. The tuples $(a,b,c)$ with the above property are known as \emph{admissible triples}. More explicitly, the tensor product of $\mathfrak{sl}_2$-modules is given by`
$$\chi_{a\Lambda}\otimes\chi_{b\Lambda}=\chi_{|a-b|\Lambda}\oplus \chi_{(|a-b|+2)\Lambda}\oplus\cdots\oplus \chi_{(a+b)\Lambda}.$$
Assume that $a\geq b$. Since $a+b\leq 2n$ we have two cases: $a+b\leq n$ or $n<a+b\leq 2n$. The above discussion proves the following.

\bigskip

\begin{propo}
	For $\mathfrak{g}$ of type $A_2^{(2)}$ at level $2n$, the product structure of $V_{2n}(\mathfrak{g})$ is explicitly given by
	$$\chi_{a\Lambda}\otimes \chi_{b\Lambda}=\sum_{0\leq i\leq \frac{n+b-a}{2}}\chi_{(a-b+2i)\Lambda}-\sum_{\frac{n+b-a}{2}< i\leq b}\chi_{(2n+1+b-2i-a)\Lambda}.$$
\end{propo}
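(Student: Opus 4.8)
The plan is to combine the explicit tensor product decomposition for $\mathfrak{sl}_2$ with the piecewise formula for the map $F_{2n}$ established just above, and to track which summands get reflected by the affine Weyl group and pick up a sign. First I would fix $a\geq b$ (as the statement implicitly assumes), apply the section $S_{2n}$ to write $S_{2n}(\chi_{a\Lambda})\otimes S_{2n}(\chi_{b\Lambda})$ as the $\mathfrak{sl}_2$-tensor product, which by the displayed Clebsch--Gordan rule is $\bigoplus_{j=0}^{b}\chi_{(a-b+2j)\Lambda}$. The indices appearing are $m=a-b+2j$ for $0\le j\le b$, so they run over $a-b,\ a-b+2,\ \dots,\ a+b$, all of the same parity and each with multiplicity one.

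Next I would apply $F_{2n}$ term by term. For each $m=a-b+2j$ I must decide which of the three branches of $F_{2n}$ it falls into, by reducing $m$ modulo $2n+h^\vee=2n+3$ and asking whether the reduced index lies in $[0,n]$ directly (giving $+\chi_{m'\Lambda}$), equals $-1$ mod $2n+3$ (giving $0$), or satisfies $m+m'+2\equiv 0$ (giving $-\chi_{m'\Lambda}$). Since $0\le a-b\le m\le a+b\le 2n$, every $m$ here lies in the range $[0,2n]$, so no term is killed (the $m\equiv-1$ case would require $m=2n+2$, which is out of range) and I only need to separate the unreflected terms from the reflected ones. The key computation is the cutoff: $m=a-b+2j$ stays fixed ($m'=m\le n$, positive branch) exactly when $a-b+2j\le n$, i.e. $j\le \frac{n-a+b}{2}$, matching the first sum's range $0\le i\le \frac{n+b-a}{2}$. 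When $j>\frac{n-a+b}{2}$, i.e. $n<m\le 2n$, the weight $m\Lambda$ lies outside the fundamental alcove and the unique affine reflection carrying it back is the reflection in the wall at $2n+1$ (since $\mathfrak{g}$ has $h^\vee=3$ but the relevant reflecting hyperplane for $A_2^{(2)}$ sits at level $2n+1$, reflecting $m\mapsto 2(2n+1)-m - ?$); I would compute $m'$ from the third branch relation $m+m'+2\equiv 0 \pmod{2n+3}$, giving $m'=2n+1-m=2n+1-(a-b+2j)$ and the sign $\epsilon=-1$. Re-indexing $j=i$ then reproduces exactly the second sum $\sum_{\frac{n+b-a}{2}<i\le b}\chi_{(2n+1+b-2i-a)\Lambda}$.

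The main obstacle I anticipate is making the reflection bookkeeping fully rigorous: I must verify that for every $m$ with $n<m\le 2n$ the reduced index $m'=2n+1-m$ genuinely lands in $[0,n]$ (it does, since $m\le 2n$ forces $m'\ge 1$, and $m>n$ forces $m'<n+1$) and that no intermediate $m$ lands precisely on the wall $m=n$ or $m=2n+1$ where the behavior could be singular. Using that the affine Weyl group of type $A^{(2)}_{2\ell}$ is $S_2\ltimes\Z$ (as noted in the paragraph above the proposition), each $m$ has a \emph{unique} representative $m'$ and a well-defined sign, so the case analysis is exhaustive and the multiplicities add without collision except possibly when a positive term and a reflected term produce the same $m'\Lambda$; I would check that the parity constraint $a+b+c\equiv 0\pmod 2$ together with the fixed range prevents any such overlap, so that the two sums are genuinely disjoint contributions. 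Once the cutoff $j=\frac{n+b-a}{2}$ and the reflected index $2n+1+b-2i-a$ are confirmed, the proposition follows by directly collecting the positive and negative terms, so the whole argument reduces to the careful but routine reduction-mod-$(2n+3)$ analysis of the $b+1$ summands.
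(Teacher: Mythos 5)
Your proposal is correct and follows essentially the same route as the paper: there the formula is likewise obtained by applying $F_{2n}$ term by term to the Clebsch--Gordan decomposition $\bigoplus_{i=0}^{b}\chi_{(a-b+2i)\Lambda}$ (with $a\geq b$), using the uniqueness of the reduced representative modulo $2n+h^\vee=2n+3$ coming from the affine Weyl group $S_2\ltimes\Z$ to split the summands into the unreflected range $m\leq n$ and the reflected range $n<m\leq 2n$, where $m'=2n+1-m$ with sign $-1$. Your auxiliary checks (no summand hits the zero branch $m\equiv -1\pmod{2n+3}$ since $a+b\leq 2n$, and the cutoff at $i=\frac{n+b-a}{2}$) are exactly the observations implicit in the paper's preceding discussion, so the two arguments coincide.
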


\bigskip

It follows that $N_{ab}^c<0$ whenever $(a,b,c)$ is admissible, $a+b>n$, and $c>n.$ In particular, $N_{n,n}^{1}=-1$ when $n$ is even and $N_{n,n}^{2}=-1$ when $n$ is odd. Later on, in section \ref{quotient-a2even}, we will give a more explicit description of the product structure of $V_{2n}(A_{2\ell}^{(2)})$ for any rank $\ell$. 

\bigskip

The following curious fact shows that when the level is $2n$ and $2n+1$, the underlying character spaces are equal while the Verlinde algebra structures are quite different. Moreover, I observed that, at even level, if the structure constants $N_{\lambda\mu}^{\nu}$ are replaced by their absolute values $|N_{\lambda\mu}^{\nu}|$, then the resulting algebra is \emph{also} associative and commutative.

\bigskip

\begin{lemma}
	Let $k\geq 1,$ then $\overline{P}^{2n}_+=\overline{P}^{2n+1}_+.$
\end{lemma}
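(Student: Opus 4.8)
The plan is to reduce the asserted equality of weight sets to an elementary parity observation about the level functional of $A_{2\ell}^{(2)}$. First I would record the coordinate description of $\overline{P}_+^{k}$. Writing a weight as $\lambda=\sum_{i=0}^{\ell}\lambda_i\Lambda_i$, dominant integrality means $\lambda_i\in\Z_{\ge 0}$ for all $i$, and the level is the comark pairing $k=\sum_{i=0}^{\ell}a_i^{\vee}\lambda_i$. In the labelling used here the comarks of $A_{2\ell}^{(2)}$ are $a_0^{\vee}=1$ and $a_i^{\vee}=2$ for $1\le i\le\ell$ (the rank-one instance is the data $h^\vee=3$, $a_1^\vee=2$ recorded in the $A_2^{(2)}$ example). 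Projecting to the finite weight $\overline{\lambda}=\sum_{i=1}^{\ell}\lambda_i\overline{\Lambda}_i$ and solving for the affine coordinate, a given $\overline{\lambda}\in\overline{P}^{+}$ lifts (uniquely modulo $\C\delta$) to a level-$k$ dominant weight precisely when $\lambda_0:=k-2\sum_{i=1}^{\ell}\lambda_i$ is a non-negative integer; since $a_0^{\vee}=1$ the integrality is automatic, so the only constraint is
$$2\sum_{i=1}^{\ell}\lambda_i\le k .$$

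With this description in hand the proof is immediate: the quantity $2\sum_{i=1}^{\ell}\lambda_i$ is always an \emph{even} non-negative integer, so the inequalities $2\sum_{i=1}^{\ell}\lambda_i\le 2n$ and $2\sum_{i=1}^{\ell}\lambda_i\le 2n+1$ are equivalent (both are equivalent to $\sum_{i=1}^{\ell}\lambda_i\le n$). Hence the defining conditions for level $2n$ and level $2n+1$ select exactly the same finite weights, giving $\overline{P}_+^{2n}=\overline{P}_+^{2n+1}$. As a consistency check this specializes in rank one to $\overline{P}_+^{2n}=\overline{P}_+^{2n+1}=\{m\Lambda\}_{m=0}^{n}$, matching the set displayed in the $A_2^{(2)}$ discussion; more globally, it is compatible with Proposition \ref{A-2-even-odd-level-iso}, since the count $\binom{n+\ell}{\ell}$ of weights with $\sum\lambda_i\le n$ agrees with the number of level-$n$ weights of $C_\ell^{(1)}$.

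The only genuine content, and the step I would treat most carefully, is establishing that every finite comark is even, i.e. $a_i^{\vee}=2$ for $1\le i\le\ell$, together with $a_0^{\vee}=1$. These two facts are exactly what make the level functional parity-insensitive between consecutive even and odd values, and they are special to $A_{2\ell}^{(2)}$ in this labelling: in the opposite labelling, where the affine node carries comark $2$ and a finite node carries comark $1$, the analogous condition would impose a parity requirement on $\sum_i\lambda_i$ that alternates with $k$, and the lemma would fail. I would therefore either cite the comark table directly or verify it from the Cartan matrix, after which no further computation is needed. (I would also read the hypothesis as $n\ge 1$, the displayed ``$k\ge 1$'' being a slip for the parameter $n$ occurring in $2n$ and $2n+1$.)
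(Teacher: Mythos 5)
Your proof is correct and takes essentially the same approach as the paper: both arguments rest on the comarks $a_0^\vee=1$ and $a_i^\vee=2$ ($1\le i\le\ell$) of $A_{2\ell}^{(2)}$, which make the lifting condition $2\sum_{i=1}^{\ell}\lambda_i\le k$ insensitive to the parity difference between $k=2n$ and $k=2n+1$. The paper merely packages this as two inclusions---adding $\Lambda_0$ for the easy direction, and observing that $\langle\lambda,K\rangle=c_0+2\sum_{i\ge 1}c_i=2n+1$ forces $c_0>0$ so that $\Lambda_0$ can be subtracted for the other---and you are also right that the hypothesis ``$k\ge 1$'' in the statement is a slip for the parameter $n$.
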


\begin{proof}
	It is a straightforward check to show that $\overline{P}^{2n}_+\subseteq \overline{P}^{2n+1}_+.$ Suppose that $\lambda\in P^{2n+1}_+$ and $\lambda=\sum_{i=0}^{\ell}c_i\Lambda_i,$ then $c_i\in \N$ for $0\leq i\leq \ell$ and
	$$\langle \lambda,K\rangle = \left\langle \lambda,\alpha_0^\vee+2\sum_{i=1}^{\ell}\alpha_i^\vee\right\rangle = 2n+1.$$
	This equation implies that $c_0>0$ and so we can define $\lambda'\in P_{2n}^+$ by $\lambda'=\lambda-\Lambda_0$. Therefore, the earlier inclusion and $\overline{P}^{2n}_+\supseteq \overline{P}^{2n+1}_+$ implies the lemma.
\end{proof}

\bigskip

Another different feature in this case is that the grading structure (as an algebra) only exists when the level is odd (since it exists in $V_k(C_{\ell}^{(1)})$). Nevertheless, when the level is even we can still define the same quotient $\mathcal{A}=\overset{\circ}{P}/\overset{\circ}{Q}\cong \Z_2$ and get the decomposition $V_k(\mathfrak{g})=V_0\bigoplus V_1$ \emph{as a vector space}. Empirically, we have observed the following phenomenon and conjecture that $V_k(\mathfrak{g})$ can be realized as a commutative associative algebra with nonnegative integral structure constants.

\begin{conjecture}
	Let $\mathfrak{g}$ be an affine Lie algebra of type $A_{2\ell}^{(2)}$ and fix the level $2n$ for $n\in \Z_{>0}$. For any $\lambda,\mu\in P_{2n}$, 
	$$\chi_{\lambda}\chi_{\mu}=\sum_{\nu\in P_{2n}^+\text{ mod }\C\delta} (-1)^{[\lambda]+[\mu]+[\nu]} |N_{\lambda\mu}^{\nu}|\chi_{\nu}$$
	in $V_{2n}(\mathfrak{g})$, where $[\phi]$ is the class of $\phi$ in $\mathcal{A}\cong\{0,1\}$. In particular, the structure constants of $V_{2n}(\mathfrak{g})$, with respect to the basis $\{\widetilde{\chi}_{\lambda}\}_{\lambda\in P_{2n}^+\text{ mod }\C\delta}$, where $\widetilde{\chi}_{\lambda}=(-1)^{[\lambda]}\chi_{\lambda}$, are nonnegative.
\end{conjecture}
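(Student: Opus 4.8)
The plan is to reduce the conjecture to a single parity statement about elements of the affine Weyl group and then to control that parity using the representation-ring description of $V_{2n}(\mathfrak{g})$ together with the folding algorithm. Writing $\widetilde{\chi}_{\lambda}=(-1)^{[\lambda]}\chi_{\lambda}$, the asserted identity is equivalent to the claim that the sign of every structure constant is dictated by the grading,
$$\operatorname{sgn}\left(N_{\lambda\mu}^{\nu}\right)=(-1)^{[\lambda]+[\mu]+[\nu]}\qquad\text{with}\qquad \big|N_{\lambda\mu}^{\nu}\big|=\#\{\text{contributions}\},$$
so that no cancellation occurs among the terms computing $N_{\lambda\mu}^{\nu}$. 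I would establish both halves simultaneously. First I would expand $N_{\lambda\mu}^{\nu}$ via the maps $S_{2n}$ and $F_{2n}$: since $\chi_{\lambda}\cdot\chi_{\mu}=F_{2n}(\overset{\circ}{\chi}_{\overline{\lambda}}\otimes\overset{\circ}{\chi}_{\overline{\mu}})$ and $\overset{\circ}{\chi}_{\overline{\lambda}}\otimes\overset{\circ}{\chi}_{\overline{\mu}}=\sum_{\nu_0}c_{\overline{\lambda}\,\overline{\mu}}^{\,\nu_0}\overset{\circ}{\chi}_{\nu_0}$ with $c_{\overline{\lambda}\,\overline{\mu}}^{\,\nu_0}\in\Z_{\ge0}$, one gets
$$N_{\lambda\mu}^{\nu}=\sum_{\nu_0}c_{\overline{\lambda}\,\overline{\mu}}^{\,\nu_0}\,\epsilon(w_{\nu_0}),$$
summed over the dominant $\nu_0$ that fold to $\nu$, where $w_{\nu_0}$ is the unique element of the affine Weyl group $W_{aff}$ appearing in $F_{2n}$ carrying $\nu_0+\overline{\rho}$ into the fundamental alcove.

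The first, easy, step is the grading input: every weight of $V_{\overline{\lambda}}\otimes V_{\overline{\mu}}$ is $\equiv\overline{\lambda}+\overline{\mu}\pmod{\overset{\circ}{Q}}$, so each highest weight $\nu_0$ of a summand satisfies $[\nu_0]=[\lambda]+[\mu]$ in $\mathcal{A}\cong\Z_2$. The heart of the matter is then to evaluate $\epsilon(w_{\nu_0})$. I would introduce two characters $\epsilon,\gamma:W_{aff}\to\{\pm1\}$, where $\epsilon$ is the length character and $\gamma(w)=(-1)^{[\beta_w]}$ records the class modulo $\overset{\circ}{Q}$ of the translation part $\beta_w$ of $w$; the map $\gamma$ is a homomorphism because $\overset{\circ}{W}$ acts trivially on $\mathcal{A}$, so translation parts of a product add modulo $\overset{\circ}{Q}$. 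From $w_{\nu_0}(\nu_0+\overline{\rho})=\nu+\overline{\rho}$ and triviality of $\overset{\circ}{W}$ on the grading one gets $[\beta_{w_{\nu_0}}]=[\nu_0]+[\nu]$, hence
$$\gamma(w_{\nu_0})=(-1)^{[\nu_0]+[\nu]}=(-1)^{[\lambda]+[\mu]+[\nu]}.$$
Thus the conjecture is exactly the statement that the product homomorphism $\delta=\epsilon\gamma$ satisfies $\delta(w_{\nu_0})=+1$ for every folding element occurring above. This single statement delivers both halves at once: since $\gamma(w_{\nu_0})$ depends only on the classes $[\lambda],[\mu],[\nu]$, once $\delta(w_{\nu_0})=1$ for all contributing $\nu_0$ the values $\epsilon(w_{\nu_0})=\gamma(w_{\nu_0})$ coincide, the signed sum has no cancellation, and its common sign is the predicted $(-1)^{[\lambda]+[\mu]+[\nu]}$.

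On the Coxeter generators one has $\delta(s_i)=\epsilon(s_i)\gamma(s_i)=(-1)(+1)=-1$ for the finite reflections $i=1,\dots,\ell$, since these translate by $0$; whereas the twisted affine reflection $s_0$ reverses the $\Z_2$-grading (as one checks directly in rank one, where $s_0$ sends $c\mapsto 2n+1-c$, flipping the parity of $c$), giving $\delta(s_0)=(-1)(-1)=+1$. Consequently $\delta(w)=(-1)^{\#\{\text{finite simple reflections in }w\}}$, and the conjecture becomes the assertion that each folding element $w_{\nu_0}$ uses an \emph{even} number of finite simple reflections.

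The main obstacle is precisely this parity claim. Because $\nu_0+\overline{\rho}$ is already strictly $\overset{\circ}{W}$-dominant, the finite reflections enter $w_{\nu_0}$ only through the successive interaction of reflections across the affine wall with the walls of the dominant cone, and I expect to control this via the explicit alcove-folding algorithm developed in Section~\ref{quotient-a2even} for moving weights of $2kA_1$ into $kA_1$. Concretely, I would track the finite-reflection parity along each step of that algorithm and show it returns to even exactly when $\nu_0+\overline{\rho}$ reaches the fundamental alcove; equivalently, I would derive the closed product formula for $V_{2n}(A_{2\ell}^{(2)})$ promised there and read off the sign $(-1)^{[\lambda]+[\mu]+[\nu]}$ of each coefficient directly, as is already transparent in the rank-one Proposition above. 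The one global input that must be secured before this bookkeeping can close is the uniform computation $\delta(s_0)=+1$, i.e.\ that the twisted affine reflection reverses the grading in every rank; I anticipate this is the delicate point, as it is the feature that distinguishes the even-level $A_{2\ell}^{(2)}$ case from all the non-negative cases and is responsible for the negative structure constants in the first place.
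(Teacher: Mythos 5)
You should know at the outset that the paper does not prove this statement: it is stated and labelled as a \emph{conjecture}, supported only by machine computation, by the explicit rank-one product formula for $A_2^{(2)}$ at level $2n$ (which verifies the sign pattern for $\ell=1$, since the positive and negative sums there contribute to weights of opposite parity and hence never cancel), and by the alcove-folding machinery of section \ref{quotient-a2even}. So there is no proof in the paper to match, and your proposal must be judged as an attempted proof of an open statement. Judged that way, it is a reduction, not a proof, and you say so yourself. Your reformulation is correct as far as it goes: writing each folding element as $w=t_{\beta}\overline{w}$, the character $\gamma(w)=(-1)^{[\beta]}$ is a well-defined homomorphism precisely because $\overset{\circ}{W}\bigl(\tfrac{1}{2}\alpha_{\ell}+\overset{\circ}{Q}\bigr)=\tfrac{1}{2}\alpha_{\ell}+\overset{\circ}{Q}$, and the identity $[\beta_{w_{\nu_0}}]=[\nu_0]+[\nu]$ together with $[\nu_0]=[\lambda]+[\mu]$ converts the conjecture into the claim $\delta(w_{\nu_0})=+1$ for $\delta=\epsilon\gamma$. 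Moreover, the point you single out as the delicate global input, $\delta(s_0)=+1$ in every rank, is in fact already settled by the paper: the corollary in section \ref{quotient-a2even} computes $w_i=t_{\alpha_i^\vee+\cdots+\alpha_{\ell}^\vee}s_{i-1}\cdots s_{\theta}$ and shows each $w_i$ ($i>0$) shifts the $\Z_2$-class by $1$; taking $i=1$ gives $\gamma(s_0)=-1$, hence $\delta(s_0)=+1$. So your worry is misplaced there — that step is not where the difficulty lives.

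The genuine gap is the parity claim you defer to ``bookkeeping'': that every folding element $w_I=w_{i_m}\cdots w_{i_1}$ actually arising from a dominant weight of $\overset{\circ}{\chi}_{\overline{\lambda}}\overset{\circ}{\chi}_{\overline{\mu}}$ (with $\lambda,\mu$ of level $2n$) contains an even number of finite simple reflections, i.e.\ $\sum_{j}(i_j-1)\equiv 0 \pmod 2$. Note that $\delta$ is \emph{not} identically $+1$ on the affine Weyl group — for instance a pure translation $t_{\beta}$ with $[\beta]=1$ (in rank one, $m\mapsto m+(2n+3)j$ with $j$ odd) has $\delta=-1$ — so the claim is a geometric statement about which group elements can occur as foldings of weights in the bounded region reachable from level-$2n$ tensor products, not a group-theoretic triviality. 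Combining the paper's theorem (sign $(-1)^{i_1+\cdots+i_m}$) with its corollary (grading shift $m \bmod 2$, and only under the hypothesis that all $i_j>0$, a case restriction you would also need to remove), this parity claim is \emph{equivalent} to the conjecture. Your proposal therefore reconstructs, essentially verbatim, the reduction the paper itself already provides, and stops exactly where the paper stops: no argument is given that tracks the finite-reflection parity through the recursive algorithm, and nothing in your sketch suggests how to rule out contributing foldings with $\sum_j(i_j-1)$ odd. (A smaller soft spot: your ``no cancellation'' assertion $|N_{\lambda\mu}^{\nu}|=\#\{\text{contributions}\}$ is strictly stronger than the stated conjecture; it would follow from the parity claim, but you should flag that you are proving more than is asked, not less.) As it stands, you have an accurate and well-organized reformulation of the open problem, not a proof of it.
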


\bigskip

\section{Applications}
\label{applications section}
\subsection{Quotients of $V_k(\mathfrak{g})$}
\label{quotient-verlinde}
The definition of Verlinde algebras for the various types of affine Lie algebras gives an explicit way of computing the structure constants of these algebras. In a recent paper by J. Hong [Ho2], the author attaches a fusion ring to all affine Lie algebras and he expects that the corresponding structure constants are all non-negative. Here we show that Hong's fusion algebras can be realized as \emph{quotients} of the Verlinde algebras presented here. With this realization, we can answer some questions posed in [Ho2]. In the case of $A_{2\ell}^{(2)}$, the Verlinde algebras and Hong's fusion algebras are isomorphic, proving that the structure constants are sometimes negative (specifically, when the level is even they are negative in all known cases). In the remaining cases, computations show that the structure constants are also sometimes negative (depending on the level and type).

First, let us describe the quotients of the Verlinde algebras to be considered. Recall the map 
$$F_k:\mathcal{R}(\overset{\circ}{\mathfrak{g}})\to Ch_k(\mathfrak{g})$$
given in section \ref{r=1 section} where $\mathfrak{g}$ is an untwisted affine Lie algebra of rank $\ell$. This map depends on the affine Weyl group 
$$\overset{\circ}{W}\ltimes \overset{\circ}{Q}{}^\vee$$
(where, as usual, $\overset{\circ}{Q}{}^\vee$ is identified with a lattice in $\mathfrak{h}^*$ using the invariant bilinear form.) Consider, instead, the group $W':=\overset{\circ}{W}\ltimes \overset{\circ}{Q}$. When $\mathfrak{g}$ has only 1 root length, $W=W'$ and so we may assume that $\mathfrak{g}$ is of type $B_{\ell}^{(1)}, C_{\ell}^{(1)},F_{4}^{(1)},$ or $G_{2}^{(1)}$. The group $W'$ is generated by $s_{\alpha_i'}$ where $\alpha_i'=\alpha_i$ for $i=1.\ldots,\ell$ and $\alpha_0'=\delta-\theta_s,$ where $\theta_s$ is the highest short root of $\overset{\circ}{\mathfrak{g}}$. The $\alpha_i'$ give a root system (that is equivalent to that of $\mathfrak{g}^{t}{}'$) and there are coresponding notions for fundamental weights $\Lambda_0',\Lambda_1',\ldots,$ the element $\rho'=\sum_{i=0}^{\ell}\Lambda_i'$, the weight lattice $P'=\bigoplus_{i=0}^{\ell}\Z \Lambda_i'$, the level $k$ dominant weights $P'^{+}_k$, the space of `primed' level $k$ characters $Ch_k'(\mathfrak{g})=\bigoplus_{\lambda\in P_k'^+\text{ mod }\C\delta}\C A_{\lambda'+\rho'}/A_{\rho'}$, etc. The corresponding dual Coxeter number $h^\vee{}'$ coincides with the Coxeter number $h$. Note that $\overline{\rho}'=\overline{\rho}$. 

The map $F_{k+h-h^\vee}$ uses the action of $\overset{\circ}{W}\ltimes (k+h)\overset{\circ}{Q}{}^\vee$ in its definition. Using the `primed' version of the affine Weyl group $W_{aff}'^{k+h}=\overset{\circ}{W}\ltimes (k+h)\overset{\circ}{Q}{}$, define the map
$$F_k':\mathcal{R}(\overset{\circ}{\mathfrak{g}})\to Ch_k'(\mathfrak{g})$$
$$\overset{\circ}{\chi}_{\overline{\lambda}}\mapsto \left\{\begin{matrix}
\epsilon(w)\chi_{k\Lambda_0'+w(\overline{\lambda}+\overline{\rho})-\overline{\rho}} &\text{ if there exists }w\in W_{aff}'^{k+h},\ w(\overline{\lambda}+\overline{\rho})\in P_{k+h}'^{aff\ ++}\\
0 & \text{ otherwise}
\end{matrix}\right. .$$
Since $\overset{\circ}{Q}{}^\vee \subseteq \overset{\circ}{Q}$, $W_{aff}^{k+h}\subseteq W_{aff}'^{k+h}$ and $F_k'$ factors through $V_{k+h-h^\vee}(\mathfrak{g})$, i.e., there is a map
$$G_k:V_{k+h-h^\vee}(\mathfrak{g})\to Ch_k'(\mathfrak{g})$$
such that $F_k'=G_k\circ F_{k+h-h^\vee}$. The following theorem appears in [Ho2].

\bigskip

\begin{theorem}(Hong)
	\label{hong-homo}
	$F_k'$ is a homomorphism.
\end{theorem}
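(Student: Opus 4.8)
The plan is to recognize that $F_k'$ is nothing other than the ordinary Walton map of Section \ref{r=1 section}, but built from the primed affine data $\{\alpha_i'\}$, and then to reproduce the argument behind Walton's theorem. Since $\overline{\rho}'=\overline{\rho}$ and the primed dual Coxeter number coincides with $h$, the objects $P_k'^+$, $Ch_k'(\mathfrak{g})$, $W_{aff}'^{k+h}=\overset{\circ}{W}\ltimes(k+h)\overset{\circ}{Q}$, and the canonical section $S_k'\colon\chi_{\lambda'}\mapsto\overset{\circ}{\chi}_{\overline{\lambda}}$ are exactly the Section \ref{r=1 section} objects for the untwisted affine algebra attached to the $\alpha_i'$, and $F_k'$ is the corresponding reduction map. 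In particular Theorem \ref{untwisted-modular-action} (Kac--Peterson) and the Verlinde formula apply to this primed algebra, and the product $\star$ on $Ch_k'(\mathfrak{g})$ is the one its $S$-matrix produces.

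The conceptual heart is to exhibit $Ch_k'(\mathfrak{g})$ as a quotient of $\mathcal{R}(\overset{\circ}{\mathfrak{g}})$ by a vanishing ideal. By Theorem \ref{untwisted-modular-action} the columns of the primed $S$-matrix are character evaluations: writing $t_{\mu'}=\exp\!\left(-2\pi i(\overline{\mu}+\overline{\rho})/(k+h)\right)$ for $\mu'$ regular dominant, one has $S_{\lambda'\mu'}=c_{\lambda'}\,\overset{\circ}{\chi}_{\overline{\mu}}(t_{\lambda'})$. The orthogonality $S^{-1}=\overline{S}$ together with the Verlinde formula says precisely that the evaluation map $\mathrm{ev}\colon\mathcal{R}(\overset{\circ}{\mathfrak{g}})\to\bigoplus_{\mu'}\C$, $x\mapsto\big(x(t_{\mu'})\big)_{\mu'}$, carries the tensor product to the pointwise product and identifies $Ch_k'(\mathfrak{g})$, with $\star$, with the image of $\mathrm{ev}$. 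Because $\ker(\mathrm{ev})$ is the ideal of virtual characters vanishing at every $t_{\mu'}$, the induced map is automatically a ring homomorphism; everything then reduces to matching this induced map with $F_k'$.

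The step I expect to be the main obstacle is exactly this last identification, namely $F_k'=\mathrm{ev}$ under the isomorphism $\mathrm{image}(\mathrm{ev})\cong Ch_k'(\mathfrak{g})$. When $\overline{\lambda}$ already lies in the level-$k$ alcove this is immediate: $F_k'(\overset{\circ}{\chi}_{\overline{\lambda}})=\chi_{\lambda'}$ and $S_k'(\chi_{\lambda'})=\overset{\circ}{\chi}_{\overline{\lambda}}$, so Walton's product formula gives $F_k'(\overset{\circ}{\chi}_{\overline{\lambda}})\star F_k'(\overset{\circ}{\chi}_{\overline{\mu}})=\chi_{\lambda'}\star\chi_{\mu'}=F_k'(\overset{\circ}{\chi}_{\overline{\lambda}}\otimes\overset{\circ}{\chi}_{\overline{\mu}})$. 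For general dominant $\overline{\nu}$ I must show that the dot-action reduction defining $F_k'$ preserves evaluations, i.e.\ the affine reflection symmetry $\overset{\circ}{\chi}_{\overline{\nu}}(t_{\mu'})=\epsilon(w)\,\overset{\circ}{\chi}_{w\cdot\overline{\nu}}(t_{\mu'})$ for $w\in W_{aff}'^{k+h}$, both sides vanishing when $\overline{\nu}+\overline{\rho}$ meets a wall. This is where the primed lattice $\overset{\circ}{Q}$ (rather than $\overset{\circ}{Q}{}^\vee$) and the denominator $k+h$ enter: the identity follows from the Weyl character formula for $\overset{\circ}{\chi}$ together with the invariance of the exponentials $e^{-2\pi i(\beta,\,\overline{\mu}+\overline{\rho})/(k+h)}$ under translation of $\beta$ by $(k+h)\overset{\circ}{Q}$ and under $\overset{\circ}{W}$, exactly as in the Kac--Walton reduction recalled in the proof of Theorem \ref{Verlinde-grading}.

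Granting this symmetry, $F_k'$ and $\mathrm{ev}$ agree on the basis $\{\overset{\circ}{\chi}_{\overline{\nu}}\}$ of $\mathcal{R}(\overset{\circ}{\mathfrak{g}})$, so $F_k'$ inherits the homomorphism property from $\mathrm{ev}$, completing the proof. Equivalently, one may phrase the entire argument as showing that $\ker F_k'$ is an ideal and then using the alcove case above to identify the quotient product with $\star$; this reformulation is also consistent with the factorization $F_k'=G_k\circ F_{k+h-h^\vee}$ noted above, where $F_{k+h-h^\vee}$ is already a homomorphism by Walton's theorem for $\mathfrak{g}$ and $G_k$ implements the further reduction by the additional reflection $s_{\alpha_0'}$.
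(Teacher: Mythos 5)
The paper does not actually prove this statement; it is quoted from [Ho2], so your attempt can only be measured against what a correct proof needs. Your overall instinct (realize $\ker F_k'$ as the ideal of virtual characters vanishing at a finite set of torus points, via affine antisymmetry of Weyl characters) is indeed the standard route and close to Hong's. But your setup contains a genuine error: the primed data is \emph{not} the data of an untwisted affine algebra. Since $\alpha_0'=\delta-\theta_s$ with $\theta_s$ the highest \emph{short} root, the $\alpha_i'$ form the root system of the \emph{twisted} algebra $\mathfrak{g}^{t}{}'$ (the paper says this explicitly in section \ref{quotient-verlinde}), i.e.\ exactly the case $r>a_0$ in which, as this paper stresses throughout, the character space is not $SL(2,\Z)$-stable, Theorem \ref{untwisted-modular-action} does not apply, and there is no unitary $S$-matrix on $Ch_k'(\mathfrak{g})$ to feed into the Verlinde formula. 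Your phrase ``the product $\star$ on $Ch_k'(\mathfrak{g})$ is the one its $S$-matrix produces'' is therefore unfounded, and in fact circular: the algebra structure on $Ch_k'(\mathfrak{g})$ is \emph{defined} by pushing the product forward along $F_k'$, and the content of Hong's theorem is precisely that this is well defined, i.e.\ that $\ker F_k'$ is an ideal.

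The same error surfaces concretely in your key antisymmetry step. Since $\mathfrak{g}$ has two root lengths, the normalized form gives $(\alpha_i,\overline{\Lambda}_i)=\tfrac{1}{r}\notin\Z$ for a short simple root $\alpha_i$, so for generic $\mu'\in P_k'^{+}$ one has $(\beta,\overline{\mu}+\overline{\rho})\notin\Z$ for $\beta\in\overset{\circ}{Q}$; hence $e^{-2\pi i(\beta,\overline{\mu}+\overline{\rho})}\neq 1$ and the numerator of the Weyl character formula is \emph{not} antisymmetric under translations by $(k+h)\overset{\circ}{Q}$ at your points $t_{\mu'}=\exp\left(-2\pi i(\overline{\mu}+\overline{\rho})/(k+h)\right)$. (E.g.\ in type $B_\ell$, with $\overset{\circ}{Q}=\Z^\ell$, one needs $\overline{\mu}+\overline{\rho}\in\Z^\ell$, which forces $\overline{\mu}$ into the spinor class.) The valid evaluation points are those with $\overline{\mu}+\overline{\rho}$ in the dual lattice $\nu(\overset{\circ}{P}{}^\vee)$ of $\overset{\circ}{Q}$ --- a set not indexed by $P_k'^{+}$ --- and even after fixing this you would still owe the completeness/nondegeneracy of these evaluation functionals on the alcove basis (the invertibility of the relevant rectangular $S$-matrix, which in this paper is the substance of the proposition relating $S(\mathfrak{g})$ to $S(\mathfrak{g}^t)$, and in [Ho2] is a separate key lemma). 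Finally, your closing ``equivalent'' reformulation via $F_k'=G_k\circ F_{k+h-h^\vee}$ begs the question: multiplicativity of $G_k$ is Theorem \ref{quotient-verlinde}, which the paper \emph{deduces from} \ref{hong-homo}, not the other way around.
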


\bigskip

Recall the correspondence $\mathfrak{g}\leftrightarrow\mathfrak{g}'$ for the twisted affine algebras given in the table in section \ref{r>a_0 case}. Note that the underlying simple Lie algebra of $\mathfrak{g}'$ is isomorphic to the underlying simple Lie algebra of $\mathfrak{g}^t$. Identify their Cartan subalgebras $\overset{\circ}{\mathfrak{h}}{}'=\overset{\circ}{\mathfrak{h}}{}^t$ and notice that since the $\theta$ element of $\mathfrak{g}'$ is the highest short root, $\alpha_0\in \mathfrak{h}'{}^*$ and $(\alpha_0^t)'\in \mathfrak{h}^t{}^*$ can also be identified. This identification induces a natural bijection $P'_k\cong (P_k^t)'$. Finally, note that $h'^\vee=(h^{\vee t})'=h^t$. All of this shows that there is a natural isomorphism $\chi_{\lambda}\mapsto \chi_{\lambda}$ between $Ch_k(\mathfrak{g}')$ and $Ch_k'(\mathfrak{g}^t)$. Using this natural isomorphism, we have the following result.

\bigskip

\begin{theorem}
	\label{quotient-verlinde}
	$G_k$ is a surjective homomorphism. In particular, if $\mathfrak{g}$ is a twisted affine Lie algebra that is not of type $A_{2\ell}^{(2)}$, then $Ch_k(\mathfrak{g}')=G_k(V_k(\mathfrak{g}))$ has the structure of a commutative associative algebra.
\end{theorem}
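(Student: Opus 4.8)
The plan is to obtain everything from the factorization $F_k'=G_k\circ F_{k+h^\vee-h}$ recorded just above the statement, by transporting the homomorphism property of the two outer maps onto $G_k$ through a diagram chase, and then reading off the algebra structure on $Ch_k(\mathfrak{g}')$ as a homomorphic image. I would work entirely in the untwisted picture attached to $\mathfrak{g}^t$, where the domain is $V_{k+h^\vee-h}(\mathfrak{g}^t)$ and the codomain is $Ch_k'(\mathfrak{g}^t)$, and only at the very end invoke the isomorphisms $V_k(\mathfrak{g})\cong V_{k+h^\vee-h}(\mathfrak{g}^t)$ of Theorem \ref{twisted-untwisted-isomorphism} and $Ch_k(\mathfrak{g}')\cong Ch_k'(\mathfrak{g}^t)$ (the natural identification $\chi_\lambda\mapsto\chi_\lambda$ established in the preceding paragraph) to rephrase the conclusion as $G_k\colon V_k(\mathfrak{g})\to Ch_k(\mathfrak{g}')$.

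The first ingredient is that $F_{k+h^\vee-h}$ is surjective: for each basis vector $\chi_\lambda$ with $\lambda\in P^+_{k+h^\vee-h}$, the $w=1$ branch of the definition of $F$ gives $F_{k+h^\vee-h}(\overset{\circ}{\chi}_{\overline{\lambda}})=\chi_\lambda$, so every basis element lies in the image. The second ingredient is Hong's Theorem \ref{hong-homo}, which makes $F_k'$ an algebra homomorphism on all of $\mathcal{R}(\overset{\circ}{\mathfrak{g}}{}^t)$. With these in hand, $G_k$ is multiplicative by the following chase. Given $v,v'\in V_{k+h^\vee-h}(\mathfrak{g}^t)$, write $v=F_{k+h^\vee-h}(\overset{\circ}{\chi}_{\overline{\lambda}})$ and $v'=F_{k+h^\vee-h}(\overset{\circ}{\chi}_{\overline{\mu}})$ with $\lambda,\mu\in P^+_{k+h^\vee-h}$. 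Walton's product formula, applied at these top-level weights, gives $v\cdot v'=F_{k+h^\vee-h}(\overset{\circ}{\chi}_{\overline{\lambda}}\otimes\overset{\circ}{\chi}_{\overline{\mu}})$. Applying $G_k$ and using linearity together with the factorization $F_k'=G_k\circ F_{k+h^\vee-h}$ term-by-term in the $\mathcal{R}(\overset{\circ}{\mathfrak{g}}{}^t)$-decomposition of $\overset{\circ}{\chi}_{\overline{\lambda}}\otimes\overset{\circ}{\chi}_{\overline{\mu}}$ yields $G_k(v\cdot v')=F_k'(\overset{\circ}{\chi}_{\overline{\lambda}}\otimes\overset{\circ}{\chi}_{\overline{\mu}})=F_k'(\overset{\circ}{\chi}_{\overline{\lambda}})F_k'(\overset{\circ}{\chi}_{\overline{\mu}})=G_k(v)G_k(v')$, the middle equality being Hong's theorem. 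Note that this argument needs only the surjectivity of $F_{k+h^\vee-h}$ and Walton's formula for top-level weights, not the full homomorphism property of $F_{k+h^\vee-h}$.

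It remains to record surjectivity and the unit, and then to draw the final conclusion. Surjectivity of $G_k$ is immediate from $\operatorname{im}G_k\supseteq\operatorname{im}(G_k\circ F_{k+h^\vee-h})=\operatorname{im}F_k'$, together with the fact that $F_k'$ is already onto $Ch_k'(\mathfrak{g}^t)$ (again by the $w=1$ branch, $F_k'(\overset{\circ}{\chi}_{\overline{\lambda}})=\chi_{k\Lambda_0'+\overline{\lambda}}$ for $\lambda\in P_k'^+$). The identity is preserved because both $F$-maps send the trivial character to the respective identity element. Since $V_k(\mathfrak{g})$ is commutative and associative by Definition \ref{twisted-verlinde-definition} and the Verlinde formula, and a homomorphic image of a commutative associative unital algebra is again such, the transported structure endows $Ch_k(\mathfrak{g}')=G_k(V_k(\mathfrak{g}))$ with a commutative associative algebra structure. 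The main point requiring care is not the chase itself, which is formal, but the bookkeeping of levels and spaces: one must confirm that the level shift $k+h^\vee-h$ and the natural isomorphisms $\overset{\circ}{\mathfrak{h}}{}'=\overset{\circ}{\mathfrak{h}}{}^t$, $h'^\vee=h^t$, and $P_k'\cong(P_k^t)'$ align so that the domain of $G_k$ is genuinely $V_k(\mathfrak{g})$ and its codomain is genuinely $Ch_k(\mathfrak{g}')$; once these identifications are pinned down, the homomorphism and surjectivity follow as above.
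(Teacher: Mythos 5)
Your proposal is correct and takes essentially the same route as the paper's own proof: surjectivity of $G_k$ is deduced from surjectivity of $F_{k+h-h^\vee}$, multiplicativity is obtained on basis elements via Walton's product formula together with Hong's Theorem \ref{hong-homo} and the factorization $F_k'=G_k\circ F_{k+h-h^\vee}$, and the conclusion for $Ch_k(\mathfrak{g}')$ follows from the identifications $V_k(\mathfrak{g})=V_{k+h^\vee-h}(\mathfrak{g}^t)$ and $Ch_k'(\mathfrak{g}^t)\cong Ch_k(\mathfrak{g}')$. Your extra bookkeeping (the unit, the remark that only top-level Walton products and surjectivity of $F$ are needed, and the level-shift check $h^\vee=h^t$, $h=h^{\vee t}$) only makes explicit what the paper leaves implicit.
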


\begin{proof}
	Let $\mathfrak{g}$ be an untwisted affine algebra and $k$ a positive integer. Since $F_{k+h-h^\vee}$ is surjective, then so is $G_k$. For any $\chi_{\lambda},\chi_{\mu}\in V_{k+h-h^\vee}(\mathfrak{g})$, recall that $\chi_{\lambda}\chi_{\mu}=F_{k+h-h^\vee}(\overset{\circ}{\chi}_{\overline{\lambda}}\overset{\circ}{\chi}_{\overline{\mu}})$. Therefore, by 	\ref{hong-homo},
	$$G_k(\chi_{\lambda}\chi_{\mu})=G_k(F_{k+h-h^\vee}(\overset{\circ}{\chi}_{\overline{\lambda}}\overset{\circ}{\chi}_{\overline{\mu}}))=F_k'(\overset{\circ}{\chi}_{\overline{\lambda}})F_k'(\overset{\circ}{\chi}_{\overline{\mu}})=G_k(\chi_{\lambda})G_k(\chi_{\mu}).$$
	In particular, since $V_{k}(\mathfrak{g})=V_{k+h^\vee-h}(\mathfrak{g}^t)$ (notice the crucial shift of level) and $h^\vee=h^t, h=h^{\vee t}$, the discussion in the preceding paragraph proves that the quotient algebra $Ch'_k(\mathfrak{g}^t)$ is naturally isomorphic to $Ch_k(\mathfrak{g}')$ and so the latter can be given the same algebra structure.
\end{proof}

Let $\mathfrak{g}$ be a twisted affine Kac-Moody Lie algebra that is not of type $A_{2\ell}^{(2)}$ and let $k$ a positive integer, then Hong's fusion algebra $R_k(\mathfrak{g})$ is defined to be the vector space 
$$R_k(\mathfrak{g})=\bigoplus_{\lambda\in P_k\text{ mod }\C\delta}\C \overset{\circ}{\chi}_{\overline{\lambda}}$$
with structure constants
$$c_{\lambda\mu}^{\nu}=\sum_{w\in W_{k}^\dagger}\epsilon(w) N_{\overline{\lambda},\overline{\mu}}^{\overline{\nu}}$$
where $W_{k}^\dagger$ is the set of minimal representations of the left cosets of $\overset{\circ}{W}$ in $W_{aff}^{k+h^\vee}$ and $N_{\overline{\lambda},\overline{\mu}}^{\overline{\nu}}$ is the coefficient of $\overset{\circ}{\chi}_{\overline{\nu}}$ in $\overset{\circ}{\chi}_{\overline{\lambda}}\overset{\circ}{\chi}_{\overline{\mu}}$. When $\mathfrak{g}$ is of type $A_{2\ell}^{(2)}$, we use the same definition of $R_k(\mathfrak{g})$ except that we use the affine Weyl group $\overset{\circ}{W}\ltimes (k+h^\vee)\overset{\circ}{Q}{}^\vee$. See [Ho2] for more details. Now, we prove that $R_k(\mathfrak{g}')$ is a quotient of $V_k(\mathfrak{g})$ and give an example of how to compute the structure constants $c_{\lambda\mu}^{\nu}$ from the fusion rules $N_{\lambda\mu}^{\nu}$. 

\bigskip

\begin{propo}
	Let $\mathfrak{g}$ be a twisted affine Lie algebra that is not of type $A_{2\ell}^{(2)}$. The fusion algebra $R_k(\mathfrak{g}')$ is isomorphic to a quotient of $V_{k}(\mathfrak{g}).$ More precisely, $R_k(\mathfrak{g}')\cong G_k(V_{k}(\mathfrak{g}))$. When $\mathfrak{g}$ is of type $A_{2\ell}^{(2)}$, then $R_k(\mathfrak{g})\cong V_k(\mathfrak{g})$.
\end{propo}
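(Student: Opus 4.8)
The plan is to split the statement into two assertions and treat them separately, since the two cases rely on different machinery already assembled in the excerpt.

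For the generic case, where $\mathfrak{g}$ is twisted but not of type $A_{2\ell}^{(2)}$, I would proceed as follows. First I would invoke Theorem \ref{quotient-verlinde}, which already gives that $G_k:V_k(\mathfrak{g})\to Ch_k(\mathfrak{g}')$ is a surjective homomorphism of algebras, so that $Ch_k(\mathfrak{g}')\cong V_k(\mathfrak{g})/\ker G_k$ carries the quotient algebra structure. It therefore remains only to identify this quotient algebra $Ch_k(\mathfrak{g}')$, as an algebra, with Hong's fusion algebra $R_k(\mathfrak{g}')$. The key step is to compare structure constants directly. On the one hand, the product in $Ch_k(\mathfrak{g}')$ is transported, via the natural isomorphism $Ch_k'(\mathfrak{g}^t)\cong Ch_k(\mathfrak{g}')$ established just before Theorem \ref{quotient-verlinde}, from the map $F_k'$: that is, $\chi_\lambda\cdot\chi_\mu = F_k'(\overset{\circ}{\chi}_{\overline\lambda}\overset{\circ}{\chi}_{\overline\mu})$. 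On the other hand, Hong's structure constants are $c_{\lambda\mu}^\nu = \sum_{w\in W_k^\dagger}\epsilon(w)N_{\overline\lambda\overline\mu}^{\overline\nu}$. I would unwind the definition of $F_k'$ through the affine Weyl group $W_{aff}'^{k+h}$ acting on the alcove $P_{k+h}'^{aff\ ++}$ and show that the folding-with-sign procedure it encodes — pushing each weight appearing in the tensor product $\overset{\circ}{\chi}_{\overline\lambda}\overset{\circ}{\chi}_{\overline\mu}$ back into the fundamental alcove with the sign $\epsilon(w)$ — is exactly the sum over minimal coset representatives $W_k^\dagger$ that defines $c_{\lambda\mu}^\nu$. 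Here one uses that the primed root data of $\mathfrak{g}$ matches the root data of $\mathfrak{g}^t$ (noted in the excerpt), together with the level identities $h'^\vee = h^t$ and the bijection $P_k'\cong (P_k^t)'$, so that the two affine Weyl groups and their fundamental domains coincide.

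For the $A_{2\ell}^{(2)}$ case, the claim is the stronger $R_k(\mathfrak{g})\cong V_k(\mathfrak{g})$ (an isomorphism, not merely a quotient). Here I would argue that no proper quotient is taken: since $\mathfrak{g}$ is self-transpose in this case (as remarked in the introduction) and $M = \overset{\circ}{Q}$, the lattice $\overset{\circ}{Q}{}^\vee$ used in the affine Weyl group for this type coincides with the lattice used to define $V_k(\mathfrak{g})$ itself, so the map $G_k$ degenerates into an isomorphism. Concretely, the definition of $R_k(\mathfrak{g})$ for type $A_{2\ell}^{(2)}$ uses the affine Weyl group $\overset{\circ}{W}\ltimes(k+h^\vee)\overset{\circ}{Q}{}^\vee$, which is precisely the affine Weyl group built into the Verlinde algebra $V_k(\mathfrak{g})$ (via the description of the product as $\chi_\lambda\chi_\mu = F_k(S_k(\chi_\lambda)\otimes S_k(\chi_\mu))$ with the alcove form of $F_k$). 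I would verify that the folding map $F_k$ and Hong's sign-sum over $W_k^\dagger$ produce identical structure constants, and that both index sets are $P_k \bmod \C\delta$, so that the identity $\chi_\lambda\mapsto\chi_\lambda$ is an algebra isomorphism.

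The main obstacle I expect is the bookkeeping in the generic case: matching the sign-and-fold prescription hidden inside $F_k'$ (which is phrased via a unique alcove representative $w\in W_{aff}'^{k+h}$ with $w(\overline\lambda+\overline\rho)\in P_{k+h}'^{aff\ ++}$) against Hong's prescription phrased via the full set $W_k^\dagger$ of minimal coset representatives. The reconciliation rests on the Racah–Speiser/Kac–Walton formalism already used in the proof of Theorem \ref{Verlinde-grading}: each weight $\overline\phi$ in the support of $\overset{\circ}{\chi}_{\overline\lambda}\overset{\circ}{\chi}_{\overline\mu}$ is either killed (when it lies on an alcove wall) or contributes $\epsilon(w)$ for the unique $w$ returning it to the interior of the fundamental alcove, and summing these contributions over all $\overline\phi$ with image $\overline\nu$ reproduces $c_{\lambda\mu}^\nu$ term by term. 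The delicate point is confirming that the shift of level by $h^\vee - h$ (so that $V_k(\mathfrak{g}) = V_{k+h^\vee-h}(\mathfrak{g}^t)$, as in Theorem \ref{twisted-untwisted-isomorphism}) aligns correctly with the level $k+h^\vee$ appearing in Hong's $W_k^\dagger$ for $\mathfrak{g}'$; tracking these level shifts through the transpose and adjacent-algebra correspondences is where care is most needed.
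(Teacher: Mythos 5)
Your proposal is correct and takes essentially the same route as the paper: invoke Theorem \ref{quotient-verlinde} for the quotient algebra structure, identify the coefficient of $\chi_{\nu}$ in $F_k'(\overset{\circ}{\chi}_{\overline{\lambda}}\overset{\circ}{\chi}_{\overline{\mu}})$ with Hong's $c_{\lambda\mu}^{\nu}$ by matching the alcove-folding elements of the primed affine Weyl group with the minimal coset representatives $W_k^\dagger$, and in type $A_{2\ell}^{(2)}$ observe that Hong's modified affine Weyl group $\overset{\circ}{W}\ltimes(k+h^\vee)\overset{\circ}{Q}{}^\vee$ is exactly the one built into $F_k$, so the structure constants agree outright. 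One small slip that does not affect your argument: for $A_{2\ell}^{(2)}$ one has $r=a_0$, hence $M=\nu(\overset{\circ}{Q}{}^\vee)$ rather than $M=\overset{\circ}{Q}$ (the latter is the $r>a_0$ case), but your operative claim about the coincidence of the two affine Weyl groups stands on the paper's explicit definitions.
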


\begin{proof}
	First suppose that $\mathfrak{g}$ is a twisted affine algebra but is not of type $A_{2\ell}^{(2)}$. As vector spaces, $Ch_k(\mathfrak{g}')\cong R_k(\mathfrak{g}')$. Since for any $\lambda \in P_k',$ $w(\overline{\lambda}+\overline{\rho})-\overline{\rho}\in \overset{\circ}{P}{}'{}^+$ if and only if $w\in W_k^\dagger$, the coefficient of $\chi_{\nu}\in Ch_k(\mathfrak{g}')$ in $F_k'(\overset{\circ}{\chi}_{\overline{\lambda}}\overset{\circ}{\chi}_{\overline{\mu}})$ is $c_{\lambda\mu}^{\nu}$. The result then follows from \ref{quotient-verlinde}.\\
	\\
	When $\mathfrak{g}$ is of type $A_{2\ell}^{(2)}$, the underlying vector spaces are similarly isomorphic and the homomorphism $F_k$ gives the exact same structure constants $c_{\lambda\mu}^{\nu}$.
\end{proof}

\bigskip

\begin{corollary}
	Let $\mathfrak{g}$ be a twisted affine Lie algebra, then the structure constants $c_{\lambda\mu}^{\nu}$ of $R_k(\mathfrak{g}')$ are given by
	$$c_{\lambda\mu}^{\nu}=\sum_{w\in X_{\nu}}\sum_{\phi\in P'^{k+}\text{ mod }\C\delta}\frac{S_{\lambda\phi}S_{\mu\phi}(S^{-1})_{\nu\phi}}{S_{k\Lambda_0,\phi}},$$
	where $X_\nu=\{w\in W_k^\dagger:(w(\overline{\nu}+\overline{\rho})-\overline{\rho},\theta^t)\leq k+h^\vee-h\}$.
\end{corollary}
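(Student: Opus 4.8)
The final statement is a corollary that expresses the structure constants $c_{\lambda\mu}^{\nu}$ of Hong's fusion algebra $R_k(\mathfrak{g}')$ as a double sum: an outer sum over a set $X_\nu$ of affine Weyl group elements, and an inner sum which is precisely the Verlinde formula for the fusion rules $N_{\lambda\mu}^{\phi}$ of $V_k(\mathfrak{g})$ (equivalently $V_{k+h^\vee-h}(\mathfrak{g}^t)$, via Theorem \ref{twisted-untwisted-isomorphism}). The plan is to combine two facts already established: (1) by the proposition immediately preceding, $R_k(\mathfrak{g}')\cong G_k(V_k(\mathfrak{g}))$, so the structure constants of $R_k(\mathfrak{g}')$ are obtained by applying the quotient map $G_k$ to products computed in $V_k(\mathfrak{g})$; and (2) the product in $V_k(\mathfrak{g})$ is governed by the Verlinde formula, so the coefficient of $\chi_\phi$ in $\chi_\lambda\chi_\mu$ is exactly the inner sum $\sum_{\phi}S_{\lambda\phi}S_{\mu\phi}(S^{-1})_{\nu\phi}/S_{k\Lambda_0,\phi}$.

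First I would expand $\chi_\lambda\chi_\mu$ in $V_k(\mathfrak{g})$ using the Verlinde formula, writing it as $\sum_{\phi\in P'^{k+}\ \text{mod }\C\delta} N_{\lambda\mu}^{\phi}\,\chi_\phi$ where $N_{\lambda\mu}^{\phi}$ is the inner sum. Then I would apply $G_k$ to both sides. Since $G_k$ is the algebra homomorphism from Theorem \ref{quotient-verlinde} realizing the quotient, $G_k(\chi_\lambda\chi_\mu)=G_k(\chi_\lambda)G_k(\chi_\mu)$ gives the product in $R_k(\mathfrak{g}')$, and by linearity $G_k(\chi_\lambda\chi_\mu)=\sum_{\phi}N_{\lambda\mu}^{\phi}\,G_k(\chi_\phi)$. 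The content of the corollary is then to track how $G_k$ acts on a basis element $\chi_\phi$: from the definition of $F_k'$ and the factorization $F_k'=G_k\circ F_{k+h-h^\vee}$, the map $G_k$ reflects each weight $\phi$ into the fundamental alcove using the larger affine Weyl group $W_{aff}'^{k+h}=\overset{\circ}{W}\ltimes(k+h)\overset{\circ}{Q}$, producing signed contributions $\epsilon(w)\chi_\nu$ whenever $w(\overline{\phi}+\overline{\rho})$ lands in the interior alcove. Collecting the coefficient of a fixed $\chi_\nu$ amounts to summing $\epsilon(w)N_{\lambda\mu}^{\phi}$ over all $(w,\phi)$ pairs with $w(\overline{\phi}+\overline{\rho})-\overline{\rho}=\overline{\nu}$.

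The key reindexing step is to recognize that this collection of pairs is parametrized precisely by $X_\nu=\{w\in W_k^\dagger:(w(\overline{\nu}+\overline{\rho})-\overline{\rho},\theta^t)\leq k+h^\vee-h\}$, the minimal coset representatives $w$ in $W_k^\dagger$ whose action on $\nu$ brings its $\theta^t$-height within the level bound $k+h^\vee-h$ of the ambient algebra $V_{k+h^\vee-h}(\mathfrak{g}^t)=V_k(\mathfrak{g})$. Here I would invoke the characterization from the preceding proposition's proof, namely that $w(\overline{\lambda}+\overline{\rho})-\overline{\rho}\in\overset{\circ}{P}{}'{}^+$ iff $w\in W_k^\dagger$, together with the level constraint defining the allowable domain, to pair each admissible $w$ with the unique $\phi$ (modulo $\C\delta$) satisfying $\phi=w^{-1}(\nu+\rho)-\rho$. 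Substituting this identification into the double sum and using that $\epsilon(w)$ is absorbed consistently yields the stated formula.

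The main obstacle I anticipate is the careful bookkeeping between the two affine Weyl groups and the level shift. One must reconcile the group $W_{aff}'^{k+h}$ used to define $F_k'$ (acting at the untwisted level $k$ on $\mathfrak{g}'$) with the identification $V_k(\mathfrak{g})=V_{k+h^\vee-h}(\mathfrak{g}^t)$, and verify that the condition $(w(\overline{\nu}+\overline{\rho})-\overline{\rho},\theta^t)\leq k+h^\vee-h$ correctly isolates exactly those alcove reflections contributing to the coefficient of $\chi_\nu$ under $G_k$, with matching signs and no double-counting. Making precise the claim that each relevant $\phi$ corresponds to a unique element of $X_\nu$ (so that the inner Verlinde sum over $\phi$ can be pulled inside the outer sum over $w$) is the delicate point, but it follows from the uniqueness of minimal coset representatives and the fact that the affine Weyl group action on regular weights is free.
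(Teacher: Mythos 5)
Your proposal is correct and follows essentially the same route as the paper: the corollary is stated there without a separate proof, being an immediate consequence of the preceding proposition ($R_k(\mathfrak{g}')\cong G_k(V_k(\mathfrak{g}))$, with $c_{\lambda\mu}^{\nu}$ read off as the coefficient of $\chi_{\nu}$ in $F_k'(\overset{\circ}{\chi}_{\overline{\lambda}}\overset{\circ}{\chi}_{\overline{\mu}})$) combined with the Verlinde formula for the structure constants of $V_k(\mathfrak{g})$ and the tracking of how $G_k$ reflects basis characters into the smaller alcove, which is exactly your argument. One cosmetic caveat: given the paper's definition of $X_\nu$, the weight paired with $w\in X_\nu$ is $\phi=w(\overline{\nu}+\overline{\rho})-\overline{\rho}$ rather than $w^{-1}(\nu+\rho)-\rho$ as you wrote, but since $\epsilon(w)=\epsilon(w^{-1})$ and inversion bijects the relevant coset representatives, this does not affect the validity of the reindexing.
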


\bigskip

\begin{example}
	The left multiplication matrices of $G_k(V_k(\mathfrak{g}))$ are given by `truncating and folding' the left multiplication matrices of $V_k(\mathfrak{g})$. More precisely, one removes all columns corresponding to weights $\lambda$ such that $\langle \lambda+\rho,\theta^\vee\rangle\geq k+h^\vee$ (here $\theta^\vee$ is the highest coroot of $\overset{\circ}{\mathfrak{g}}$), removes all rows corresponding to $\lambda$ such that $\langle \lambda+\rho,\theta^\vee\rangle$ is an integral multiple of  $k+h^\vee$, and then adds all rows that are in the same orbit under the \emph{dot action} $w\cdot \lambda=w(\lambda+\rho)-\rho$ with appropriate sign $\epsilon(w)$. In particular, consider the data from example \ref{B3-example} where the weights are
	$$0,\overline{\Lambda}_1,2\overline{\Lambda}_1,\overline{\Lambda}_2,2\overline{\Lambda}_3,$$
	$$\overline{\Lambda}_3,\overline{\Lambda}_1+\overline{\Lambda}_3.$$
	In this case, $\mathfrak{g}$ is of type $A_{5}^{(2)}$ and $k=1$.
	The only $\overline{\lambda}+\overline{\rho}$, for $\overline{\lambda}$ from the list above, satisfying $\langle \lambda+\rho,\theta^\vee\rangle< k+h^\vee=1+6$ are $0$ and $\overline{\Lambda}_3$. The weights $\overline{\Lambda}_1+\overline{\rho}$, $\overline{\Lambda}_2+\overline{\rho},$ and $2\overline{\Lambda}_3$ satisfy $\langle \lambda+\rho,\theta^\vee\rangle=7$ and so are thrown out. Finally, adding the remaining rows (all zeros) we get
	$$L_1'=\begin{pmatrix}
		1	&	0\\
		0	&	1
	\end{pmatrix} \text{ and }L_2'=\begin{pmatrix}
	0	&	1\\
	1	&	0
	\end{pmatrix}.$$
\end{example}
 
\bigskip

\subsubsection{More Negative Structure Constants}

In the remaining twisted cases, negative structure constants also appear in a pattern very similar to the appearance of negative structure constants for the Verlinde algebra of $A^{(2)}_{2\ell}.$ Unlike the conjectured state of affairs for $A^{(2)}_{2\ell},$ experimental data shows that it is not possible to `twist' the basis by a character in order to make the structure constants non-negative. Nonetheless, a grading structure plays a role in determining when the structure constants are non-negative. 

Let $\mathfrak{g}$ be a twisted affine Lie algebra that is not of type $A^{(2)}_{2\ell}.$ The algebra $R(\mathfrak{g})$ defined in the previous section does not inherit the algebra grading of $V_k(\mathfrak{g}').$ Instead, there is a \emph{vector space grading} indexed by $\overset{\circ}{P}/\overset{\circ}{Q}$ analogous to the untwisted case, i.e., 
$$R(\mathfrak{g})=\bigoplus_{a\in \mathcal{A}}R(\mathfrak{g})_a$$
where $R(\mathfrak{g})_a=\text{span}\{\chi_{\lambda}:\lambda\in P_k^+\text{ and } \overline{\lambda}\in a\}.$ Similar to the $A^{(2)}_{2\ell}$ case, this vector space grading seems to play a role (albeit different) in determining the non-negativity of the structure constants of $R(\mathfrak{g}).$ The structure constants $N_{\lambda\mu}^{\nu}$ of $R(\mathfrak{g})$ can be observed to behave according to the following $2/3$'s rule.

\bigskip

\begin{conjecture}
	Let $k$ be an even positive integer and $\mathfrak{g}$ be a twisted affine Lie algebra that is not of type $A^{(2)}_{2\ell}.$ If $\lambda,\mu,\nu\in P_k^+$ and at least 2 out of   $\overline{\lambda},\overline{\mu},\overline{\nu}$ lie in $\overset{\circ}{Q},$ then $N_{\lambda\mu}^{\nu}\geq 0,$ where $N_{\lambda\mu}^{\nu}$ are the structure constants of $R(\mathfrak{g})$ with respect to the basis $\{\chi_\lambda\}_{\lambda\in P_k^+}.$
\end{conjecture}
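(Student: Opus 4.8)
The plan is to write each structure constant of $R(\mathfrak{g})$ as a signed sum of \emph{non-negative} untwisted fusion rules and then prove that, under the hypothesis, the negative terms drop out. By the realization $R_k(\mathfrak{g}')\cong G_k(V_k(\mathfrak{g}))$ (Theorem \ref{quotient-verlinde}) combined with the level shift $V_k(\mathfrak{g})=V_{k+h^\vee-h}(\mathfrak{g}^t)$ (Theorem \ref{twisted-untwisted-isomorphism}), the constants of $R(\mathfrak{g})$ are obtained by applying the folding homomorphism $G_k$ to the fusion rules $N_{\lambda\mu}^{\sigma}$ of the \emph{untwisted} Verlinde algebra $V_{k+h^\vee-h}(\mathfrak{g}^t)$; these $N$ are non-negative integers by the concluding remark of Section \ref{r=1 section} ([Hu1]). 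The ``truncate and fold'' recipe of the Example, together with the $\epsilon(w)$-equivariance of the $S$-matrix under the dot action (its numerator is the anti-invariant alternant of Theorem \ref{untwisted-modular-action}), packages this as
\[
c_{\lambda\mu}^{\nu}=\sum_{w\in X_\nu}\epsilon(w)\,N_{\lambda\mu}^{w\cdot\nu},\qquad \epsilon(w)=\pm1.
\]
Thus $c_{\lambda\mu}^\nu$ is a difference of non-negative integers, and the whole difficulty is to rule out the contributions with $\epsilon(w)=-1$.

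Next I would use the algebra grading to localize the problem. By Theorem \ref{Verlinde-grading}, $N_{\lambda\mu}^{w\cdot\nu}=0$ unless $\overline{\lambda}+\overline{\mu}=\overline{w\cdot\nu}$ in $\overset{\circ}{P}/\overset{\circ}{Q}$, and since the elements of $W_{aff}'^{k+h}$ translate by $(k+h)\overset{\circ}{Q}$ and have linear part in $\overset{\circ}{W}$, their dot-action fixes the class in $\overset{\circ}{P}/\overset{\circ}{Q}$; hence every surviving $w\cdot\nu$ has class $\overline{\nu}$. Transporting this constraint through the isometries $\tau$ and $\overset{\bullet}{\tau}$ into the labels of $R(\mathfrak{g})$, the hypothesis that two of $\overline{\lambda},\overline{\mu},\overline{\nu}$ lie in $\overset{\circ}{Q}$, together with the relation $\overline{\lambda}+\overline{\mu}=\overline{\nu}$, forces all three into $\overset{\circ}{Q}$ (if the relation fails, $c_{\lambda\mu}^\nu$ already vanishes). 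The statement is therefore reduced to the identity component: all of $\overline{\lambda},\overline{\mu},\overline{\nu}$ in $\overset{\circ}{Q}$.

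The heart of the argument is to show that on this component, and for \emph{even} $k$, every nonzero summand has $\epsilon(w)=+1$. The nontrivial elements of $X_\nu$ fold across the extra walls of $W_{aff}'^{k+h}=\overset{\circ}{W}\ltimes(k+h)\overset{\circ}{Q}$ relative to $W_{aff}^{k+h}=\overset{\circ}{W}\ltimes(k+h)\overset{\circ}{Q}{}^\vee$, namely the affine hyperplanes $(\,\cdot\,,\theta_s)\in(k+h)\Z$, where $\theta_s$ is the highest short root. A factor $-1$ is produced precisely when a Racah--Speiser weight $\overline{\lambda}+\overline{\phi}+\overline{\rho}$ (with $\overline{\phi}\in P(\overline{\mu})$, as in the proof of Theorem \ref{Verlinde-grading}) must be reflected back, i.e.\ when it \emph{strictly} overshoots such a wall. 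I would show that when $\overline{\lambda},\overline{\mu}\in\overset{\circ}{Q}$ and $k$ is even, the pairing $(\overline{\lambda}+\overline{\phi}+\overline{\rho},\theta_s^\vee)$ is pinned to a parity and range for which strict overshoot is impossible: the weight either stays in the small alcove (sign $+1$) or lands exactly on a wall, where it is non-regular and its summand vanishes. Making the overshoot bound explicit is where I would deploy the recursive alcove-reduction algorithm of Section \ref{quotient-a2even} (which carries weights of $2kA_1$ into $kA_1$), using evenness of $k$ and membership in $\overset{\circ}{Q}$ to control the decisive parity.

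I expect this parity/overshoot analysis to be the main obstacle, and it will most likely require a type-by-type verification for $A_{2\ell-1}^{(2)}, D_{\ell+1}^{(2)}, E_6^{(2)}$, and $D_4^{(3)}$, since $\theta_s$ and the index $[\overset{\circ}{P}:\overset{\circ}{Q}]$ differ (in the self-dual cases $E_6^{(2)}$ and $D_4^{(3)}$ one must still check that no short-root wall intervenes strictly). In parallel I would pursue the more conceptual route of identifying $c_{\lambda\mu}^\nu$ with the trace of the order-$r$ diagram automorphism on the fusion space $\mathcal{V}_{\lambda\mu\nu}$ attached to $\mathfrak{g}'$: when two of the three weights are automorphism-invariant (exactly the $\overset{\circ}{Q}$ condition), one expects the induced action to permute a natural basis of $\mathcal{V}_{\lambda\mu\nu}$, so that its trace is a non-negative integer. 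Such an interpretation would explain the $2/3$ pattern uniformly and clarify why type $A_{2\ell}^{(2)}$---where no two weights can be made simultaneously invariant---is the genuine exception to non-negativity.
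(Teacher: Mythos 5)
First, a point of order: the paper does not prove this statement --- it is stated as a \emph{conjecture} (the ``$2/3$'s rule''), supported only by machine computation, so there is no proof of record to compare yours against. Your attempt must therefore stand on its own, and it does not: it is a research plan whose decisive step is announced rather than executed. The entire burden of the claim is carried by the sentence ``I would show that \dots\ the pairing $(\overline{\lambda}+\overline{\phi}+\overline{\rho},\theta_s^\vee)$ is pinned to a parity and range for which strict overshoot is impossible,'' and no argument is given for this parity claim in any type. Moreover, the one technical tool you invoke for it, the alcove-reduction algorithm of Section \ref{quotient-a2even}, is constructed in the paper specifically for the pair $C^{(1)}_{\ell}\to A^{(2)}_{2\ell}$ (carrying $2kA_1$ into $kA_1$ for the type-$C$ alcove); nothing in the paper justifies transplanting it to the $B$, $F$, or $G$ foldings relevant here, where the wall geometry and the index $[\overset{\circ}{P}:\overset{\circ}{Q}]$ are different.

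There is also a concrete error earlier in the reduction. You claim that the grading forces $c_{\lambda\mu}^{\nu}=0$ whenever $\overline{\lambda}+\overline{\mu}\neq\overline{\nu}$ in $\overset{\circ}{P}/\overset{\circ}{Q}$, so that the hypothesis ``two of three classes in $\overset{\circ}{Q}$'' collapses to the identity component. That would make the grading an \emph{algebra} grading on $R(\mathfrak{g})$, which the paper explicitly denies in the paragraph introducing the conjecture: $R(\mathfrak{g})$ carries only a \emph{vector space} grading. The slip occurs when you ``transport the constraint through $\tau$'': $\tau$ interchanges roots and coroots ($\alpha_i\mapsto\alpha_i^{\vee t}$), so the class constraint actually preserved by the folding group $\overset{\circ}{W}\ltimes(k+h)\overset{\circ}{Q}$ lives modulo the image lattice $\tau(\overset{\circ}{Q})$, not modulo the root lattice in which the labels of $R(\mathfrak{g})$ are graded; these differ precisely in the multiply-laced types at issue, and whether the finer classes are respected depends on the parity of $k$ --- which is exactly why the conjecture is restricted to even $k$ and phrased as a $2/3$ rule rather than as class-additivity. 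Indeed, the analogous explicit computation in type $A^{(2)}_{2}$ (Section \ref{r=a_0=2 case}) shows nonzero constants occurring precisely \emph{off} the additive pattern, and those are the negative ones; by analogy the mixed patterns (two classes in $\overset{\circ}{Q}$, one outside) are non-vacuous cases of the conjecture, and your plan never treats them. Your closing suggestion --- interpreting $c_{\lambda\mu}^{\nu}$ as the trace of the diagram automorphism on a conformal-block space and arguing it permutes a basis --- is the right conceptual frame (it is the setting of [Ho1], [Ho2]), but the existence of a canonical permuted basis when two weights are fixed is again an unproven hope, not an argument. As it stands, the proposal contains no complete proof of any case of the conjecture.
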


\bigskip

\subsection{The $A^{(2)}_{2\ell}$ Quotient Structure}
\label{quotient-a2even}

In the case where $\mathfrak{g}$ is of type $A^{(2)}_{2\ell}$, the Verlinde algebra $V_{k}(\mathfrak{g})$ can be realized as a quotient of a Verlinde algebra $V_{k'}(C^{(1)}_{\ell})$. In this section, we use the affine Weyl groups in question to explicitly describe the quotient $V_k(\mathfrak{g})$. We then explain the relationship between the grading structures of these vector spaces.  

\bigskip

\begin{lemma}
	Let $\mathfrak{g}$ be of type $A^{(2)}_{2\ell}$ and fix a positive integer $k$. Let $\widetilde{\mathfrak{g}}$ be an affine algebra of type $C^{(1)}_{\ell}$, then there exist a surjective homomorphism
	$$F:V_{k+\ell}(\widetilde{\mathfrak{g}})\to V_k(\mathfrak{g})$$
	given by $F(\chi_{\lambda})=F_k\overset{\circ}{\chi}_{\overline{\lambda}}$, where $F_k$ is the homomorphism defined at the end of section \ref{r=1 section}.
\end{lemma}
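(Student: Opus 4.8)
The plan is to construct the map $F$ explicitly and verify it is a well-defined surjective algebra homomorphism by exploiting the two descriptions of the Verlinde algebra product already established in the excerpt: the Verlinde-formula description and the representation-ring description via $F_k$. First I would recall from Proposition~\ref{A-2-even-odd-level-iso} and the surrounding discussion that $V_k(A^{(2)}_{2\ell})$ is built on the same underlying simple Lie algebra data as $C^{(1)}_{\ell}$ (indeed the underlying finite Lie algebra of $A^{(2)}_{2\ell}$ is $B_\ell$, whose transpose/partner is $C_\ell$), so that $\mathcal{R}(\overset{\circ}{\mathfrak{g}})$ can be identified for both sides and the assignment $\chi_\lambda \mapsto F_k\overset{\circ}{\chi}_{\overline{\lambda}}$ makes sense. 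The key structural input is the factorization already used in the proof of Theorem~\ref{quotient-verlinde}: the product in $V_{k+\ell}(\widetilde{\mathfrak g})$ is computed as $\chi_\lambda\chi_\mu = \widetilde{F}(\overset{\circ}{\chi}_{\overline\lambda}\overset{\circ}{\chi}_{\overline\mu})$, where $\widetilde{F}$ is the representation-ring map for $\widetilde{\mathfrak g}=C^{(1)}_\ell$, while the product in $V_k(\mathfrak g)$ is computed via the map $F_k$ attached to $A^{(2)}_{2\ell}$.

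The main steps, in order, are as follows. First, I would check that $F$ is well-defined on the basis $\{\chi_\lambda\}$ of $V_{k+\ell}(\widetilde{\mathfrak g})$, i.e.\ that $F_k\overset{\circ}{\chi}_{\overline\lambda}$ lands in $V_k(\mathfrak g)$ for every $\lambda\in P^+_{k+\ell}(\widetilde{\mathfrak g})$; this amounts to comparing the fundamental alcoves/level shifts of the two affine Weyl groups $\overset{\circ}{W}\ltimes (k+\ell+h^{\vee}_{C})\overset{\circ}{Q}{}^\vee$ and the one governing $F_k$ for $A^{(2)}_{2\ell}$ at level $k$, and is essentially the level-matching $k+\ell \leftrightarrow k$ encoded in the statement (note $h^\vee=\ell+1$ for $C_\ell$ versus the relevant shift for $A^{(2)}_{2\ell}$). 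Second, I would prove surjectivity: since $F_k:\mathcal{R}(\overset{\circ}{\mathfrak g})\to Ch_k(A^{(2)}_{2\ell})$ is already surjective (it sends $\overset{\circ}{\chi}_{\overline\lambda}$ for $\overline\lambda$ a level-$k$ dominant weight to $\pm\chi_\nu$ and in particular hits every basis character), and every such $\overline\lambda$ arises as $\overline\mu$ for some $\mu\in P^+_{k+\ell}(\widetilde{\mathfrak g})$, the image of $F$ is all of $V_k(\mathfrak g)$. Third, I would verify the homomorphism property by the same commuting-square argument as in Theorem~\ref{quotient-verlinde}: compute
$$
F(\chi_\lambda\chi_\mu)=F\bigl(\widetilde{F}(\overset{\circ}{\chi}_{\overline\lambda}\overset{\circ}{\chi}_{\overline\mu})\bigr)=F_k\bigl(\overset{\circ}{\chi}_{\overline\lambda}\overset{\circ}{\chi}_{\overline\mu}\bigr)=F_k(\overset{\circ}{\chi}_{\overline\lambda})\,F_k(\overset{\circ}{\chi}_{\overline\mu})=F(\chi_\lambda)F(\chi_\mu),
$$
where the middle equality is the content of an ``$F\circ\widetilde{F}=F_k$'' compatibility and the penultimate equality uses that $F_k$ intertwines the tensor product with the Verlinde product on $V_k(\mathfrak g)$ (the $A^{(2)}_{2\ell}$ analogue of Walton's theorem, established in Section~\ref{r=a_0=2 case}).

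The hard part will be the second equality in that chain, namely establishing the factorization $F_k = F\circ \widetilde{F}$ at the level of representation rings, i.e.\ that truncating to the larger affine Weyl group (or larger lattice) for $\widetilde{\mathfrak g}$ and then applying $F$ agrees with the single map $F_k$ for $\mathfrak g$. This is exactly the containment-of-affine-Weyl-groups phenomenon exploited in Section~\ref{quotient-verlinde} ($W_{aff}\subseteq W_{aff}'$), now reappearing because the lattice $\overset{\circ}{Q}{}^\vee$ relevant to $C^{(1)}_\ell$ at level $k+\ell$ sits inside the lattice controlling $A^{(2)}_{2\ell}$ at level $k$ after the isometry $\varphi$ of Proposition~\ref{A-2-even-odd-level-iso} is applied. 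I would make this precise by tracking a weight $\overline\nu\in P(\overline\mu)$ through both folding procedures and checking that the dot-action orbit representatives and signs $\epsilon(w)$ coincide; once that bookkeeping is done, well-definedness, surjectivity, and the homomorphism property all follow formally, with no genuinely new analytic input beyond the $S$-matrix comparison already recorded in Proposition~\ref{A-2-even-odd-level-iso}.
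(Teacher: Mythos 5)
Your proposal takes essentially the same route as the paper: realize both Verlinde algebras as quotients of the common representation ring $\mathcal{R}(\overset{\circ}{\mathfrak{g}})$ by dot actions of affine Weyl groups at the \emph{same} shifted level $(k+\ell)+(\ell+1)=k+2\ell+1=k+h^\vee(A^{(2)}_{2\ell})$, and reduce the homomorphism property to the containment of the group attached to $C^{(1)}_\ell$ inside the one attached to $A^{(2)}_{2\ell}$. Two corrections: in the paper's (Kac's) conventions the underlying simple Lie algebra of $A^{(2)}_{2\ell}$ is $C_\ell$ itself, not $B_\ell$, so the two representation rings are literally identified and no transpose or partner is involved — this is what makes the formula $F(\chi_\lambda)=F_k\overset{\circ}{\chi}_{\overline{\lambda}}$ parse directly. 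Moreover, the ``hard part'' you isolate becomes formal once one records the single lattice identity $\widetilde{\nu}(\overset{\circ}{Q}{}^\vee)=2\nu(\overset{\circ}{Q}{}^\vee)\subset \overset{\circ}{Q}\subset \nu(\overset{\circ}{Q}{}^\vee)$, where $\nu,\widetilde{\nu}$ are the maps induced by the normalized forms of $A^{(2)}_{2\ell}$ and $C^{(1)}_\ell$ on the shared Cartan subalgebra: the subgroup containment $\overset{\circ}{W}\ltimes(k+2\ell+1)\widetilde{\nu}(\overset{\circ}{Q}{}^\vee)\subseteq \overset{\circ}{W}\ltimes(k+2\ell+1)\nu(\overset{\circ}{Q}{}^\vee)$ immediately forces $F_k$ to factor through $V_{k+\ell}(\widetilde{\mathfrak{g}})$, so your proposed weight-by-weight tracking of folding representatives and signs is unnecessary.
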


\begin{proof}
	The underlying simple Lie algebras of $\mathfrak{g}$ and $\widetilde{\mathfrak{g}}$ coincide and so we will identify their Cartan subalgebras and their duals. The map $F$ is clearly well-defined as a linear map. To show that it is a homomorphism, it is enough to show that the homomorphism $F_k$ factors through $V_{k+\ell}(\widetilde{\mathfrak{g}})$. Indeed, $V_{k+\ell}(\widetilde{\mathfrak{g}})$ is the quotient of the character ring $\mathcal{R}(\overset{\circ}{\mathfrak{g}})$ by the dot action of $\overset{\circ}{W}\ltimes (k+2\ell+1)\widetilde{\nu}(\overset{\circ}{Q}{}^\vee)$ where $\widetilde{\nu}$ is the map $\mathfrak{h}\to \mathfrak{h}^*$ induced by the normalized bilinear form of $\widetilde{\mathfrak{g}}$. If $\nu$ is the normalized bilinear form of $\mathfrak{g}$, then note that 
	$$\widetilde{\nu}(\overset{\circ}{Q}{}^\vee)=2\nu(\overset{\circ}{Q}{}^\vee)\subset \overset{\circ}{Q}\subset \nu(\overset{\circ}{Q}{}^\vee).$$
	Since $V_{k}(\mathfrak{g})$ is the quotient of $\mathcal{R}(\overset{\circ}{\mathfrak{g}})$ by the dot action of $\overset{\circ}{W}\ltimes (k+2\ell+1)\nu(\overset{\circ}{Q}{}^\vee)$, it follows that $F_k$ factors through $V_{k+\ell}(\widetilde{\mathfrak{g}})$. 
\end{proof}

\bigskip

\begin{remark}
	Let $A_1$ be the fundamental alcove of the affine Weyl group. Note that the fundamental domain of $\overset{\circ}{W}\ltimes (k+2\ell+1)\nu(\overset{\circ}{Q}{}^\vee)$ is $(k+2\ell+1)A_{1}$ and the fundamental domain of $\overset{\circ}{W}\ltimes (k+2\ell+1)\widetilde{\nu}(\overset{\circ}{Q}{}^\vee)$ is $2(k+2\ell+1)A_{1}$. We will soon present an algorithm mapping elements from $2(k+2\ell+1)A_{1}$ into $(k+2\ell+1)A_{1}$. 
\end{remark}

\bigskip

Let $\mathfrak{g}$ be of type $A^{(2)}_{2\ell}$ as before. For the next result, let us use the notation $w_0:=id$ and
$$w_i:=s_{i-1}s_{i-2}\cdots s_0$$
for $0< i\leq \ell$ where $s_i\in W$ are the Coxeter generators of the Weyl group $W$ of $\mathfrak{g}$. For any list of integers $I=(i_1,\ldots,i_m)$, also define
$$w_I=w_{i_m}w_{i_{m-1}}\cdots w_{i_1}.$$
In the proof of this next theorem, we define an explicit recursive algorithm defining the quotient indicated by the previous lemma.

\begin{theorem}
	Let $\mathfrak{g}$ be of type $A^{(2)}_{2\ell}$, fix a positive integer $k$, and let $\widetilde{\mathfrak{g}}$ be an affine algebra of type $C^{(1)}_{\ell}$. If $\lambda$ is a dominant weight of $\widetilde{\mathfrak{g}}$ of level $k+\ell$ such that $\overline{\lambda}+\overline{\rho}$ has trivial stabilizer with respect to the affine Weyl group $\overset{\circ}{W}\ltimes \nu(\overset{\circ}{Q}{}^\vee)$, then there is a weight $\mu$ of $\mathfrak{g}$ of level $k$ and a sequence of nonnegative integers $I=(i_1,\ldots,i_m)$, where $m$ satisfies $1\leq m\leq k+2\ell+1$, such that
	$$\overline{\mu}+\overline{\rho}=w_I(\overline{\lambda}+\overline{\rho}).$$
	In particular, for the homomorphism given in the previous lemma and $\lambda$ as above,
	$$F(\chi_{\lambda})=(-1)^{i_1+\cdots + i_m} F_k(\overset{\circ}{\chi}_{w_I\cdot \overline{\lambda}}).$$
\end{theorem}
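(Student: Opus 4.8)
The plan is to produce an explicit reduction algorithm moving a weight from the larger fundamental domain $2(k+2\ell+1)A_1$ into the smaller one $(k+2\ell+1)A_1$, tracking the sign accumulated along the way. Write $N=k+2\ell+1$ and set $\xi_0=\overline{\lambda}+\overline{\rho}$; by hypothesis $\xi_0$ lies in the interior of $2NA_1$ and has trivial stabilizer under $\overset{\circ}{W}\ltimes \nu(\overset{\circ}{Q}{}^\vee)$. The goal is to apply a controlled sequence of the building blocks $w_i=s_{i-1}s_{i-2}\cdots s_0$ (where $s_0$ is the affine reflection of $\mathfrak{g}$, reflecting across the hyperplane $(\,\cdot\,,\theta)=N$) so that the image eventually satisfies $(\xi,\theta)\le N$, i.e.\ lands in $NA_1$. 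The key observation is that each $w_i$ is designed to decrease a suitable height-type statistic while keeping the point in the closure of the dominant chamber, so that the process terminates after at most $N$ steps; this is exactly the bound $1\le m\le N$ asserted.

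First I would make precise the statistic being reduced. Since $s_0$ reflects across $(\,\cdot\,,\theta)=N$, applying $s_0$ to a point with $(\xi,\theta)>N$ produces a point on the other side but typically leaves the Weyl chamber, so the subsequent $s_1,\ldots,s_{i-1}$ in $w_i$ serve to restore dominance. I would verify, by a direct computation using the explicit root data of $C_\ell$ (the underlying finite type of both $\mathfrak{g}$ and $\widetilde{\mathfrak g}$), that for an appropriate choice of index $i$ (determined by which wall $\xi$ crosses), the element $w_i$ sends $\xi$ to a dominant weight $\xi'=w_i(\xi)$ with strictly smaller value of $(\xi',\theta)$, and that $\xi'$ remains in $2NA_1$. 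Iterating, I obtain $\xi_j=w_{i_j}(\xi_{j-1})$ with $(\xi_j,\theta)$ strictly decreasing, hence the recursion halts; setting $\mu+\rho$ equal to the terminal $\xi_m$ (which now lies in $NA_1$ and is dominant), and $I=(i_1,\ldots,i_m)$, gives $\overline{\mu}+\overline{\rho}=w_I(\xi_0)$ as required. The trivial-stabilizer hypothesis guarantees that no $\xi_j$ ever lands on a reflecting wall, so each step is genuinely strict and the chosen index $i_j$ is unambiguous.

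For the sign formula, recall from the definition of $F_k$ at the end of Section \ref{r=1 section} that $\overset{\circ}{\chi}_{\overline{\lambda}}$ maps to $\epsilon(w)\chi_{\,k\Lambda_0+w\cdot\overline{\lambda}}$ where $w$ is the unique affine Weyl element carrying $\overline{\lambda}+\overline{\rho}$ into $P^{aff\ ++}_{k+h^\vee}$. Here the element realizing $F(\chi_\lambda)=F_k(\overset{\circ}{\chi}_{\overline\lambda})$ is precisely $w_I$, and its sign is $\epsilon(w_I)=\prod_j \epsilon(w_{i_j})$. Since $w_i=s_{i-1}\cdots s_0$ is a product of $i$ simple reflections, $\epsilon(w_i)=(-1)^i$, so $\epsilon(w_I)=(-1)^{i_1+\cdots+i_m}$, which yields the stated identity $F(\chi_\lambda)=(-1)^{i_1+\cdots+i_m}F_k(\overset{\circ}{\chi}_{w_I\cdot\overline\lambda})$.

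The main obstacle I anticipate is the combinatorial verification that a single block $w_i$ (rather than an unpredictable word in the $s_j$) always suffices to both restore dominance and strictly decrease $(\,\cdot\,,\theta)$, together with establishing the uniform bound $m\le N$ on the number of blocks. This requires a careful case analysis of how the point $\xi$ meets the walls of $2NA_1$ and a monotonicity argument for the statistic under each $w_i$; the delicate point is ensuring that crossing the affine wall via $s_0$ and then re-entering the dominant chamber does not inadvertently push $(\,\cdot\,,\theta)$ back up. I would handle this by identifying $i$ as the largest index for which the relevant coordinate of $\xi$ exceeds its allowed bound, and showing geometrically that $w_i$ performs a single ``fold'' of the alcove structure, which both pins down the bound and makes the termination transparent.
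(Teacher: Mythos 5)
Your proposal is correct and takes essentially the same route as the paper's proof: the paper runs exactly this algorithm with the blocks $w_i=s_{i-1}\cdots s_0$, computing in explicit $C_\ell$ coordinates (writing $\overline{\lambda}+\overline{\rho}=\sum_i c_i\overline{\Lambda}_i$ and $b_1=c_1+\cdots+c_\ell$) that after $s_0$ the only negative coordinate is $k+h^\vee-2b_1+c_1$ on $\overline{\Lambda}_1$, that successive applications of $s_1,s_2,\ldots$ shift the negativity rightward until the running sum $k+h^\vee-2b_1+c_1+\cdots+c_j$ turns positive (which occurs for some $j\leq\ell$ since $b_1<k+h^\vee$, with regularity ruling out wall cases), and that the resulting dominant weight pairs with $\theta$ to give $2(k+h^\vee-b_1)$ or $2(b_1-c_1)$, strictly less than $2b_1$. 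The wall-crossing verification you flag as the main obstacle is precisely this short computation in the paper, and the termination bound $m\leq k+2\ell+1$ and the sign $\epsilon(w_I)=(-1)^{i_1+\cdots+i_m}$ (since $w_i$ is a product of $i$ simple reflections) come out exactly as in your sketch.
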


\begin{proof}
	Let $k$ be as in the statement and recall that $h^\vee=2\ell+1$. Also recall that $\theta=2\alpha_1+\cdots +2\alpha_{\ell-1}+\alpha_{\ell}$ and $\theta^\vee=\alpha_1^\vee+\cdots +\alpha_{\ell}^\vee$. We identify $W\cong \overset{\circ}{W}\ltimes (k+h^\vee)\nu(\overset{\circ}{Q}{}^\vee)$. The Weyl group $W$ of $\mathfrak{g}$ contains the subgroup $\overset{\circ}{W}\ltimes 2(k+h^\vee)\nu(\overset{\circ}{Q}{}^\vee)$ and maps every dominant weight $\overline{\lambda}\in \overset{\circ}{P}{}^+$ to a unique dominant weight $\overline{\mu}$ such that $\langle \overline{\mu},\theta^\vee\rangle =\frac{1}{2}(\overline{\mu},\theta)\leq k+h^\vee$. Therefore, for the purpose of describing $F$, we only need to consider dominant weights $\overline{\mu}$ such that $(\overline{\mu}+\overline{\rho},\theta)< 2(k+h^\vee)$.
		
	Let $\lambda\in P^+_{k+\ell}(\widetilde{\mathfrak{g}})$ be a \emph{regular} dominant weight, i.e., a weight such that $\overline{\lambda}+\overline{\rho}$ has trivial stabilizer with respect to $\overset{\circ}{W}\ltimes (k+h^\vee)\nu(\overset{\circ}{Q}{}^\vee)$, then there is a unique element $w_\lambda\in \overset{\circ}{W}\ltimes (k+h^\vee)\nu(\overset{\circ}{Q}{}^\vee)$ such that $w_{\lambda}(\overline{\lambda}+\overline{\rho})\in \overset{\circ}{P}{}^{+}$, and $(w_{\lambda}(\overline{\lambda}+\overline{\rho}),\theta)< k+h^\vee$. If $(\overline{\lambda}+\overline{\rho},\theta)< k+h^\vee$, then we can take $w_{\lambda}=w_0=id$.
	
	Suppose that $k+h^\vee< (\overline{\lambda}+\overline{\rho},\theta)<2(k+h^\vee)$. Write 
	$$\overline{\lambda}+\overline{\rho}=\sum_{i=1}^{\ell}b_i\alpha_i^\vee=\sum_{i=1}^{\ell}c_i\overline{\Lambda}_i$$
	(where $\alpha_i^\vee$ is mapped into $\overset{\circ}{\mathfrak{h}}{}^*$ via $\nu$) such that $b_i>0, c_i>0$ for $i=1,\ldots,\ell$, then $k+h^\vee< 2b_1<2(k+h^\vee)$ since $\theta=2\overline{\Lambda}_1$. It is straightforward to check that $b_1=c_1+\cdots + c_{\ell}$. Applying the affine generator $s_0$, we get
	\begin{align*}
	s_0(\overline{\lambda}+\overline{\rho})&=s_{\theta}(\overline{\lambda}+\overline{\rho})+(k+h^\vee)\theta^\vee\\
	&=\overline{\lambda}+\overline{\rho}-\langle\overline{\lambda}+\overline{\rho},\theta^\vee\rangle \theta+(k+h^\vee)\theta^\vee\\
	&=\overline{\lambda}+\overline{\rho}+(k+h^\vee-2b_1)\theta^\vee\\
	&=(k+h^\vee-2b_1+c_1)\overline{\Lambda}_{1}+\sum_{i=2}^{\ell}c_i\overline{\Lambda}_{i}.
	\end{align*}
	It follows that $(s_0(\overline{\lambda}+\overline{\rho}),\theta)=2(k+h^\vee-b_1)<k+h^\vee$ and, therefore, if $k+h^\vee-2b_1+c_1>0$, then $w_{\lambda}=w_{1}=s_0$. 
	
	Suppose that $k+h^\vee-2b_1+c_1<0$.	Compute the following weights
	\begin{align*}
	s_1s_0(\overline{\lambda}+\overline{\rho})=&-(k+h^\vee-2b_1+c_1)\overline{\Lambda}_{1}+(k+h^\vee-2b_1+c_1+c_2)\overline{\Lambda}_2+\cdots+c_{\ell}\overline{\Lambda}_\ell\\
	s_2s_1s_0(\overline{\lambda}+\overline{\rho})=&c_2\overline{\Lambda}_{1}-(k+h^\vee-2b_1+c_1+c_2)\overline{\Lambda}_2+(k+h^\vee-2b_1+c_1+c_2+c_3)\overline{\Lambda}_3+\cdots\\
	s_3s_2s_1s_0(\overline{\lambda}+\overline{\rho})=&c_2\overline{\Lambda}_{1}+c_3\overline{\Lambda}_2-(k+h^\vee-2b_1+c_1+c_2+c_3)\overline{\Lambda}_3+\cdots\\
	s_{\ell-1}\cdots s_1s_0(\overline{\lambda}+\overline{\rho})=&c_2\overline{\Lambda}_{1}+c_3\overline{\Lambda}_2+\cdots+c_{\ell-1}\overline{\Lambda}_{\ell-2}\\
	&-(k+h^\vee-2b_1+c_1+\cdots +c_{\ell-1})\overline{\Lambda}_{\ell-1}
	+(k+h^\vee-b_1)\overline{\Lambda}_{\ell}.
	\end{align*}
	If we set $\overline{\lambda}_i=s_{i-1}s_{i-2}\cdots s_0(\overline{\lambda}+\overline{\rho})=w_i(\overline{\lambda}+\overline{\rho})$, then it must be the case that $\overline{\lambda}_{i_1}\in \overset{\circ}{P}{}^{+}$ for some $i_1$ such that $0\leq i_1\leq \ell$. In fact, $\overline{\lambda}_j$ is a regular dominant weight as soon as $2b_1-\sum_{i=1}^{j}c_i\leq k+h^\vee$ and, by assumption, $b_1=2b_1-\sum_{i=1}^{\ell}c_i<k+h^\vee$. Moreover, for $i>1$, $(\overline{\lambda}_i,\theta)=2(b_1-c_1)<2b_1$. If the element $w_{i_1}(\overline{\lambda}+\overline{\rho})$ satisfies $(w_{i_1}(\overline{\lambda}+\overline{\rho}),\theta)<k+h^\vee$ then take $w_{\lambda}=w_{i_1}$ and we are done. Otherwise, we may continue in this fashion, producing a sequence
	$$(\overline{\lambda}+\overline{\rho},\theta)>(w_{i_1}(\overline{\lambda}+\overline{\rho}),\theta)>(w_{i_2}w_{i_1}(\overline{\lambda}+\overline{\rho}),\theta)>\cdots$$
	which must eventually lead to $(w_I(\overline{\lambda}+\overline{\rho}),\theta)<k+h^\vee$ in at most $k+h^\vee$ steps for some finite sequence $I=(i_1,\ldots,i_m)$.
\end{proof}

\bigskip

\begin{corollary}
	Let $\mathfrak{g}$, $k$, $\widetilde{\mathfrak{g}}$, $\lambda$, $I$, $m$ etc. be as in the previous theorem. Recall that $V=V_k(\mathfrak{g})$ has a $\Z_2\cong \overset{\circ}{P}/\overset{\circ}{Q}$ grading as a vector space. If $k$ is even, then 
	$$F(\chi_{\lambda})\in V_{m+[\lambda] \text{ mod }2}$$
	if all $i_j>0$ for $1\leq j\leq m$, where $[\lambda]\in\Z_2$ is the class of $\lambda$ in $\overset{\circ}{P}/\overset{\circ}{Q}$.
\end{corollary}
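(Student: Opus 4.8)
The plan is to track the class in $\mathcal{A}=\overset{\circ}{P}/\overset{\circ}{Q}\cong\Z_2$ of the output weight through the $m$ steps of the algorithm in the previous theorem. By that theorem, $F(\chi_{\lambda})=(-1)^{i_1+\cdots+i_m}F_k(\overset{\circ}{\chi}_{w_I\cdot\overline{\lambda}})$, and since $w_I\cdot\overline{\lambda}=\overline{\mu}$ is dominant (and regular), $F_k(\overset{\circ}{\chi}_{w_I\cdot\overline{\lambda}})$ is, up to the sign $\epsilon(w_I)$, the single character $\chi_{\mu}$. Because the homogeneous components $V_a$ are subspaces, this scalar is irrelevant and $F(\chi_{\lambda})\in V_{[\overline{\mu}]}$. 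The whole problem therefore reduces to computing $[\overline{\mu}]\in\Z_2$, where $\overline{\mu}+\overline{\rho}=w_I(\overline{\lambda}+\overline{\rho})$.

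First I would reduce to the action of a single affine reflection on $\mathcal{A}$. Recall from the proof of Theorem \ref{Verlinde-grading} that $\overset{\circ}{W}$ acts trivially on $\overset{\circ}{P}/\overset{\circ}{Q}$, so each finite reflection $s_1,\ldots,s_\ell$ preserves the class of a weight mod $\overset{\circ}{Q}$. Writing $w_i=s_{i-1}\cdots s_1 s_0$ for $0<i\leq\ell$, each such factor contributes, modulo the class-preserving finite reflections, exactly one application of $s_0$. Under the hypothesis that all $i_j>0$, the word $w_I=w_{i_m}\cdots w_{i_1}$ therefore applies $s_0$ precisely $m$ times.

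Next I would pin down the class by which one $s_0$ shifts a weight. From the computation in the previous theorem, $s_0 x=s_\theta x+(k+h^\vee)\theta^\vee$ with $\nu(\theta^\vee)=\overline{\Lambda}_1$; since $s_\theta$ is a finite reflection it preserves the class, so $s_0$ shifts the class by $[(k+h^\vee)\overline{\Lambda}_1]=(k+h^\vee)[\overline{\Lambda}_1]$ in $\mathcal{A}$. For $\overset{\circ}{\mathfrak{g}}$ of type $C_\ell$ the class $[\overline{\Lambda}_1]$ is the nontrivial generator of $\overset{\circ}{P}/\overset{\circ}{Q}\cong\Z_2$. Because $h^\vee=2\ell+1$, when $k$ is even the integer $k+h^\vee$ is odd, so each $s_0$ shifts the class by the generator of $\Z_2$.

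Finally I would assemble these observations: applying $s_0$ a total of $m$ times gives $[w_I(\overline{\lambda}+\overline{\rho})]=[\overline{\lambda}+\overline{\rho}]+m$ in $\Z_2$, whence $[\overline{\mu}]=[w_I(\overline{\lambda}+\overline{\rho})]-[\overline{\rho}]=[\overline{\lambda}]+m=[\lambda]+m\pmod 2$, exactly the asserted component. The main obstacle, and the only place the hypotheses ($k$ even and all $i_j>0$) enter, is the bookkeeping in the middle steps: correctly extracting $\nu(\theta^\vee)=\overline{\Lambda}_1$ from the previous proof so as to identify the per-reflection shift as $(k+h^\vee)[\overline{\Lambda}_1]$, and verifying that each factor $w_{i_j}$ with $i_j>0$ contributes precisely one copy of $s_0$. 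If some $i_j$ equaled $0$ then $w_{i_j}=\mathrm{id}$ would contribute no affine reflection, the count of $s_0$'s would drop below $m$, and the parity formula would fail.
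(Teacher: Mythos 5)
Your proof is correct and follows essentially the same route as the paper: both arguments split each affine Weyl element into a class-preserving finite part and a translation part, identify the per-step class shift as $(k+h^\vee)[\overline{\Lambda}_1]$ (nontrivial in $\overset{\circ}{P}/\overset{\circ}{Q}$ since $k+h^\vee=k+2\ell+1$ is odd for $k$ even), and count one such shift per factor $w_{i_j}$ with $i_j>0$. The only cosmetic difference is bookkeeping: the paper records the shift per factor via the identity $w_i=t_{\alpha_i^\vee+\cdots+\alpha_\ell^\vee}\,s_{i-1}\cdots s_\theta$, whereas you extract the single $s_0$ inside each $w_{i_j}$ and track its translation part directly, which amounts to the same computation.
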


\begin{proof}
	The lattice $\nu(\overset{\circ}{Q}{}^\vee)$ is spanned by $\alpha_1,\ldots,\alpha_{\ell-1},\frac{1}{2}\alpha_{\ell}$, $2\nu(\overset{\circ}{Q}{}^\vee)\subseteq \overset{\circ}{Q}$, and 
	$$\overset{\circ}{W}\left(\frac{1}{2}\alpha_{\ell}+\overset{\circ}{Q}\right)=\frac{1}{2}\alpha_{\ell}+\overset{\circ}{Q}.$$
	We can express 
	$$w_i=s_{i-1}\cdots s_0 = t_{s_{i-1}\cdots s_1\theta^\vee} s_{i-1}\cdots s_1 s_\theta$$
	$$=t_{\alpha_i^\vee+\cdots+\alpha_{\ell}^\vee } s_{i-1}\cdots s_\theta$$
	for any $i>0,$ and so $w_i\overline{\lambda}$ is in class $[\lambda]+1$. The result follows immediately.
\end{proof}

\bigskip

\subsection{Congruence Group Action}

As a another application, let $\mathfrak{g}$ be a twisted affine Lie algebra of type $X_N^{(r)}$ and $k$ a positive integer, then $Ch_k(\mathfrak{g})$ is a module for the congruence subgroup $\Gamma_1(r)$ (see \ref{theta-fns-characters}) [Ka].

\bigskip

\begin{propo}
	\label{u21-action-prop}
	Let $\mathfrak{g}$ be as above and $\lambda\in P^k$, then
	$$\chi_{\lambda}|_{u_{21}^r}=v_k\sum_{\mu\in P_k\text{ mod }\C\delta,\nu\in P^k\text{ mod }\C\delta}e^{-2r\pi i m_{\overline{\mu}}}(S^t_{0\mu})^2\overset{\circ}{\chi}_{\overline{\lambda}}{}^t\left(e^{-\frac{2\pi i (\overline{\mu}+\overline{\rho})}{k+h^\vee}}\right)\overset{\circ}{\chi}_{\overline{\nu}}{}^t\left(e^{\frac{2\pi i (\overline{\mu}+\overline{\rho})}{k+h^\vee}}\right)\chi_{\nu}$$
	where $v_k$ is a nonzero scalar depending only on $k$ and $\mathfrak{g}$. In particular, when $r=1$
	$$\chi_{\lambda}|_{u_{21}}=\sum_{\mu\in P_k\text{ mod }\C\delta}e^{2\pi i (m_{\overline{\lambda}}+m_{\overline{\mu}})}S_{k\Lambda_0,,\mu}\overset{\circ}{\chi}_{\overline{\lambda}}\left(e^{-\frac{2\pi i (\overline{\mu}+\overline{\rho})}{k+h^\vee}}\right)\chi_{\mu}.$$
\end{propo}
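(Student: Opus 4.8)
The plan is to reduce everything to the transpose algebra $\mathfrak{g}^{t}$, on whose character space the full modular group acts, and there to use the factorization $u_{21}^{r}=ST^{-r}S^{-1}$ in $SL(2,\Z)$ (one checks $ST^{-1}S^{-1}=u_{21}$ by multiplying the defining matrices, and then takes $r$-th powers, the interior $S^{-1}S$ factors telescoping). First I would apply Theorem \ref{gamma1r-morphism}: since $u_{21}^{r}\in\Gamma_1(r)$, the map $\overset{\bullet}{\tau}$ intertwines its action up to a nonzero scalar,
$$\overset{\bullet}{\tau}\bigl(\chi_{\lambda}|_{u_{21}^{r}}\bigr)=v(k,u_{21}^{r})\,\chi^{t}_{\overset{\bullet}{\tau}(\lambda)}|_{u_{21}^{r}}.$$
In every twisted case $\mathfrak{g}^{t}$ is of type $X_N^{(1)}$ or $A^{(2)}_{2\ell}$, so by the Kac--Peterson theorem (\ref{untwisted-modular-action}) both $S$ and $T$ act on $Ch^{t}_{k+h^\vee-h}$; hence the right-hand side can be computed directly and transported back by $\overset{\bullet}{\tau}^{-1}$ on the image $\overset{\bullet}{\tau}(Ch_k)$, which contains $\chi_{\lambda}|_{u_{21}^{r}}$ because $\Gamma_1(r)$ preserves $Ch_k$.

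Next I would expand $\chi^{t}_{\overset{\bullet}{\tau}(\lambda)}|_{ST^{-r}S^{-1}}$ as a triple product. Using $\chi^{t}|_{S}$ and the diagonal $\chi^{t}|_{T^{-r}}$ from Theorem \ref{untwisted-modular-action}, together with $(S^{t})^{-1}=\overline{S^{t}}$ and the symmetry of $S^{t}$, the coefficient of $\chi^{t}_{\overset{\bullet}{\tau}(\nu)}$ is a single sum over an intermediate index $\mu$:
$$\sum_{\mu}S^{t}_{\overset{\bullet}{\tau}(\lambda),\mu}\,e^{-2\pi i r\,m^{t}_{\overline{\mu}}}\,\overline{S^{t}_{\overset{\bullet}{\tau}(\nu),\mu}}.$$
Substituting the symmetric Kac--Peterson form $S^{t}_{a,\mu}=S^{t}_{0\mu}\,\overset{\circ}{\chi}_{\overline{a}}{}^{t}\bigl(e^{-2\pi i(\overline{\mu}+\overline{\rho})/(k+h^\vee)}\bigr)$, and using that $S^{t}_{0\mu}$ is real while complex conjugation replaces the evaluation point by $e^{+2\pi i(\overline{\mu}+\overline{\rho})/(k+h^\vee)}$, the two $S^{t}$-factors collapse to $(S^{t}_{0\mu})^{2}$ times $\overset{\circ}{\chi}_{\overline{\lambda}}{}^{t}$ at $e^{-2\pi i(\overline{\mu}+\overline{\rho})/(k+h^\vee)}$ and $\overset{\circ}{\chi}_{\overline{\nu}}{}^{t}$ at $e^{+2\pi i(\overline{\mu}+\overline{\rho})/(k+h^\vee)}$. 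Transporting back by $\overset{\bullet}{\tau}^{-1}$ and absorbing $v(k,u_{21}^{r})$ and the $S$-matrix normalization $|\overset{\kern -.5em\circ}{Q^\vee}/M|^{1/2}$ into one scalar $v_k\neq 0$ gives the claimed double sum; the $\mathfrak{g}^{t}$-terms indexed outside $\overset{\bullet}{\tau}(P^k)$ must drop out, since the left-hand side lies in $\overset{\bullet}{\tau}(Ch_k)$.

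For $r=1$ the algebra is untwisted, so $\mathfrak{g}^{t}=\mathfrak{g}$, $h^\vee=h$, no transfer is needed and $v_k=1$. Here I would use the sharper \emph{exact} relation $u_{21}=TST$ (again a one-line matrix check), which, because the characters $\chi_{\lambda}$ carry a genuine---not merely projective---$SL(2,\Z)$-representation, forces the operator identity realizing $u_{21}$ as $TST$. Since $T$ is diagonal with entries $e^{2\pi i m_{\overline{\lambda}}}$, this collapses the double sum to $\sum_{\mu}e^{2\pi i(m_{\overline{\lambda}}+m_{\overline{\mu}})}S_{\lambda\mu}\chi_{\mu}$, and substituting $S_{\lambda\mu}=S_{k\Lambda_0,\mu}\,\overset{\circ}{\chi}_{\overline{\lambda}}\bigl(e^{-2\pi i(\overline{\mu}+\overline{\rho})/(k+h^\vee)}\bigr)$ yields the stated $r=1$ formula.

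The hard part is not the modular algebra but the bookkeeping forced by $\tau$. I must check that the denominators read $k+h^\vee$, which uses $h^{\vee t}=h$ and $h^{t}=h^\vee$ so that $(k+h^\vee-h)+h^{\vee t}=k+h^\vee$; that the intermediate index $\mu$, a priori ranging over $P^{t+}_{k+h^\vee-h}\bmod\C\delta$, corresponds exactly to the set $P_k\bmod\C\delta$ in the statement; that the modular anomaly $m^{t}$ and $\overline{\rho}^{t}$ transport so that the $\rho$-shift built into $\overset{\bullet}{\tau}$ gives $\overline{\overset{\bullet}{\tau}(\mu)}+\overline{\rho}^{t}=\tau(\overline{\mu}+\overline{\rho})$ (whence $m^{t}_{\overline{\mu}}=m_{\overline{\mu}}$ under the identification); and that the highest-weight indices $\overline{\lambda},\overline{\nu}$ of the $\overset{\circ}{\mathfrak{g}}{}^{t}$-characters are the ones obtained through $\overset{\circ}{\mathfrak{h}}{}^{*}\cong\overset{\circ}{\mathfrak{h}}{}^{t*}$. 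Each of these is routine given the isometry properties of $\tau$ established earlier, but making all of them simultaneously consistent---so that the transported expression is literally the displayed one, and so that $v_k\neq 0$---is where the care lies.
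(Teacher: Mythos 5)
Your proposal is correct and takes essentially the same route as the paper: the paper's own proof consists precisely of citing Theorem \ref{gamma1r-morphism} together with the identities $u_{21}^r=Su_{12}^rS^{-1}$ and $u_{21}=u_{12}^{-1}Su_{12}^{-1}$, which (since $u_{12}=T^{-1}$) are exactly your factorizations $u_{21}^r=ST^{-r}S^{-1}$ and $u_{21}=TST$, followed by the Kac--Peterson expansion you carry out. The only difference is one of detail: the paper leaves the triple-product computation and the $\tau$-bookkeeping implicit, while you have written them out.
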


\begin{proof}
	These follow directly from \ref{gamma1r-morphism} and the identities $u_{21}^r=Su_{12}^rS^{-1}$ and $u_{21}=u_{12}^{-1}Su_{12}^{-1}$, respectively.
\end{proof}

\bigskip

\begin{corollary}
	\label{u21-identity}
	Let $\overset{\circ}{\mathfrak{g}}$ be a finite dimensional simple Lie algebra with weight lattice $\overset{\circ}{P}$ and $\overset{\circ}{\chi}_{\lambda}$ be the character of the irreducible finite dimensional module of highest weight $\lambda\in \overset{\circ}{P}{}^+,$ then, for any $\lambda,\mu\in \overset{\circ}{P}{}^+,$
	$$\sum_{\nu\in \overset{\circ}{P}}e^{-2\pi i m_{\nu}}(S_{k\Lambda_0,\nu})^2\overset{\circ}{\chi}_{\lambda}\left(e^{-\frac{2\pi i (\nu+\rho)}{k+h^\vee}}\right)\overset{\circ}{\chi}_{\mu}\left(e^{\frac{2\pi i (\nu+\rho)}{k+h^\vee}}\right)=e^{2\pi i (m_{\lambda}+m_{\mu})}S_{k\Lambda_0,\mu}\overset{\circ}{\chi}_{\lambda}\left(e^{-\frac{2\pi i (\mu+\rho)}{k+h^\vee}}\right),$$
	where $m_{\phi}$ is defined the same way as the modular anomaly is for affine algebras and $S_{k\Lambda_0,\phi}=|M^*/(k+h^\vee)M|^{-1/2}\prod_{\alpha\in \overset{\circ}{\Delta}_+}2\sin\frac{\pi(\phi+\rho,\alpha)}{k+h^\vee}$.
\end{corollary}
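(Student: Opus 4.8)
The plan is to read off the identity as the equality of the two expressions for $\chi_{\lambda}|_{u_{21}}$ furnished by Proposition \ref{u21-action-prop}, specialized to the untwisted case $r=1$. Every finite dimensional simple Lie algebra $\overset{\circ}{\mathfrak{g}}$ occurs as the underlying simple Lie algebra of an untwisted affine Lie algebra $\mathfrak{g}$ of type $X_N^{(1)}$, for which $r=1$. In that case $\mathfrak{g}^t=\mathfrak{g}$, so the transpose decorations in the general $u_{21}^r$ formula collapse ($\overset{\circ}{\chi}{}^t=\overset{\circ}{\chi}$, $S^t=S$, $h^\vee=h$), and both displayed formulas of the proposition compute the one vector $\chi_{\lambda}|_{u_{21}}$. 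Comparing them is therefore all that is required.

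Concretely, I would write out the coefficient of $\chi_{\nu}$ in $\chi_{\lambda}|_{u_{21}}$ in two ways. From the general formula at $r=1$ this coefficient is
\[
v_k\sum_{\mu} e^{-2\pi i m_{\overline{\mu}}}\,(S_{k\Lambda_0,\mu})^2\,\overset{\circ}{\chi}_{\overline{\lambda}}\!\left(e^{-\frac{2\pi i(\overline{\mu}+\overline{\rho})}{k+h^\vee}}\right)\overset{\circ}{\chi}_{\overline{\nu}}\!\left(e^{\frac{2\pi i(\overline{\mu}+\overline{\rho})}{k+h^\vee}}\right),
\]
while the explicit $u_{21}$ formula (whose sum I reindex so that the output weight is $\nu$) gives the coefficient
\[
e^{2\pi i(m_{\overline{\lambda}}+m_{\overline{\nu}})}\,S_{k\Lambda_0,\nu}\,\overset{\circ}{\chi}_{\overline{\lambda}}\!\left(e^{-\frac{2\pi i(\overline{\nu}+\overline{\rho})}{k+h^\vee}}\right).
\]
Equating the two and then renaming the internal summation index $\overline{\mu}\mapsto\nu$, the output weight $\overline{\nu}\mapsto\mu$, and $\overline{\lambda}\mapsto\lambda$ reproduces the asserted identity verbatim; this relabelling is legitimate because $\overline{\lambda},\overline{\nu}$ range over all of $\overset{\circ}{P}{}^+$ and both sides depend only on these finite parts, and because the modular anomaly $m_{\overline{\mu}}$ is a function of $\overline{\mu}$ alone, so it matches $m_{\nu}$ after relabelling.

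The content of the argument lies in two pieces of bookkeeping. First, one must reconcile the index sets: the proposition sums over $\mu\in P_k\bmod\C\delta$ whereas the corollary writes $\sum_{\nu\in\overset{\circ}{P}}$. These agree once one checks that the summand is invariant under $\nu\mapsto\nu+(k+h^\vee)M$ (with $M=\overset{\circ}{Q}{}^\vee$ here), since $(S_{k\Lambda_0,\nu})^2=|M^*/(k+h^\vee)M|^{-1}\prod_{\alpha\in\overset{\circ}{\Delta}_+}4\sin^2\frac{\pi(\nu+\rho,\alpha)}{k+h^\vee}$ and the two character evaluations are all periodic under this lattice, so the nominal sum over $\overset{\circ}{P}$ is really the finite sum over $\overset{\circ}{P}/(k+h^\vee)M$; moreover $(S_{k\Lambda_0,\nu})^2$ vanishes whenever $(\nu+\rho,\alpha)\in(k+h^\vee)\Z$, so singular $\nu$ drop out and only regular representatives contribute. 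Second, since the corollary carries no prefactor, one must confirm that $v_k=1$ at $r=1$, which I would verify by tracing the scalar through the identity $u_{21}=u_{12}^{-1}Su_{12}^{-1}$ used in the proof of Proposition \ref{u21-action-prop}. The main obstacle is thus not any hard estimate but this careful matching of normalizations and index sets; once it is done, the identity is an immediate consequence of comparing coefficients.
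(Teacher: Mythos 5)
Your proposal is correct and is exactly the paper's (implicit) argument: the corollary is stated without separate proof precisely because it amounts to equating the coefficient of each basis character in the two expressions for $\chi_{\lambda}|_{u_{21}}$ given in Proposition \ref{u21-action-prop} at $r=1$, where the transpose decorations collapse. Your additional bookkeeping --- checking periodicity of the summand under $(k+h^\vee)M$ so that the sum over $\overset{\circ}{P}$ is effectively finite, noting the vanishing of $(S_{k\Lambda_0,\nu})^2$ at singular weights, and tracing the scalar $v_k$ through $u_{21}=u_{12}^{-1}Su_{12}^{-1}$ --- is more careful than the paper itself, which glosses over these normalizations.
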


\bigskip

The evaluations the characters of $\overset{\circ}{\mathfrak{g}}{}^t$ in \ref{u21-action-prop} can be replaced by multiples of the characters of $\overset{\circ}{\mathfrak{g}}$. It is worth that noting that the action of $S$ on untwisted Verlinde algebras gives the strikingly similar identity
$$\sum_{\nu\in \overset{\circ}{P}}(S_{k\Lambda_0,\nu})^2\overset{\circ}{\chi}_{\lambda}\left(e^{-\frac{2\pi i (\nu+\rho)}{k+h^\vee}}\right)\overset{\circ}{\chi}_{\mu}\left(e^{\frac{2\pi i (\nu+\rho)}{k+h^\vee}}\right)=\delta_{\lambda\mu}.$$
Indeed, the above transformation property is used to define a nondegenerate Hermitian form on the fusion algebras in other works such as [B], [Ho2]. Pursuing this idea further, one can say more about the above identity in \ref{u21-action-prop}. Let us adopt the notation
$$\left\langle\overset{\circ}{\chi}_{\lambda},\overset{\circ}{\chi}_{\mu}\right\rangle_r=\sum_{\nu\in P_k^{+}\text{ mod }\C\delta}e^{-2r\pi i m_{\overline{\nu}}}(S^t_{0\nu})^2\overset{\circ}{\chi}_{\overline{\lambda}}{}^t\left(e^{-\frac{2\pi i (\overline{\nu}+\overline{\rho})}{k+h^\vee}}\right)\overset{\circ}{\chi}_{\overline{\mu}}{}^t\left(e^{\frac{2\pi i (\overline{\nu}+\overline{\rho})}{k+h^\vee}}\right).$$

\begin{theorem}
	\label{u21-partial-character-identity}
	Let $\mathfrak{g}$ be of type $X_N^{(r)}$, $k$ be a positive integer, and $\lambda,\mu\in P^{k+}$, then there exists a $\beta=\beta(\mathfrak{g},k)\in \overset{\circ}{\mathfrak{h}}{}^*$ and a nonzero scalar $v=v(\mathfrak{g},k)$ such that
	$$\chi_{\lambda}|_{u_{21}^r}=ve^{-\frac{\pi i |\beta|^2}{r(k+h^\vee)}}\sum_{(\mu,w)\in P(\mathfrak{g},k,\lambda)\text{ mod }\C\delta}{\varepsilon(w)e^{\frac{\pi i}{r(k+h^\vee)}|\overline{\mu}+\overline{\rho}-\overline{w(\lambda+\rho)}|^2}\chi_{\mu}}$$
	where the sum is over 
	$$P(\mathfrak{g},k,\lambda)=\{(\mu,w)\in P^{k+}\times \overset{\circ}{W}|\ \overline{\mu}+\overline{\rho}-\beta=w(\overline{\lambda}+\alpha) (\text{mod }(k+h^\vee)M)  \text{ where }\alpha\in rM^*\}.$$
	In particular,
	$$\left\langle\overset{\circ}{\chi}_{\lambda},\overset{\circ}{\chi}_{\mu}\right\rangle_r=\left\{\begin{matrix}
	ve^{\frac{\pi i}{r(k+h^\vee)}(|\overline{\mu}+\overline{\rho}|^2+|\overline{\lambda}+\overline{\rho}|^2-|\beta|^2)}\sum_{w}\varepsilon(w)e^{-\frac{2\pi i}{r(k+h^\vee)}(\overline{\mu}+\overline{\rho},w(\overline{\lambda}+\overline{\rho}))}, & \text{ if }\mu+\rho-\beta\in P^{k++}\\
	0, & \text{otherwise}
	\end{matrix}\right.$$
	where the sum is over $w\in \overset{\circ}{W}$ such that $\overline{\mu}+\overline{\rho}-w(\overline{\lambda}+\overline{\rho})-\beta\in rM^*=r\overline{P}_k$. 
\end{theorem}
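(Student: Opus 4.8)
The plan is to start from the explicit formula of Proposition~\ref{u21-action-prop} and evaluate its inner summation over $\nu$, which is exactly the pairing $\langle\overset{\circ}{\chi}_{\lambda},\overset{\circ}{\chi}_{\mu}\rangle_r$, as a finite Gauss sum. Write $n=k+h^\vee$. First I would reduce the combination $(S^t_{0\nu})^2\,\overset{\circ}{\chi}^t_{\overline{\lambda}}(e^{-2\pi i(\overline{\nu}+\overline{\rho})/n})\,\overset{\circ}{\chi}^t_{\overline{\mu}}(e^{+2\pi i(\overline{\nu}+\overline{\rho})/n})$ to an alternating sum of pure exponentials. Using the product/alternant expression for $S^t_{0\nu}=S^t_{k\Lambda_0,\nu}$ from Theorem~\ref{untwisted-modular-action} together with the Weyl character formula $\overset{\circ}{\chi}^t_{\overline{\lambda}}=A^t_{\overline{\lambda}+\overline{\rho}}/A^t_{\overline{\rho}}$, the two factors of $S^t_{0\nu}$ cancel the two Weyl denominators, leaving (up to a global power of $i$ and the reality bookkeeping that turns the $+$--factor into its conjugate)
\[
(S^t_{0\nu})^2\,\overset{\circ}{\chi}^t_{\overline{\lambda}}(\cdots)\,\overset{\circ}{\chi}^t_{\overline{\mu}}(\cdots)=\frac{(-1)^{|\overset{\circ}{\Delta}_+|}}{|(M^t)^*/nM^t|}\sum_{w_1,w_2\in\overset{\circ}{W}}\epsilon(w_1w_2)\,e^{-\frac{2\pi i}{n}\left(w_1(\overline{\lambda}+\overline{\rho})-w_2(\overline{\mu}+\overline{\rho}),\,\overline{\nu}+\overline{\rho}\right)} .
\]

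Next I would record that the modular anomaly is quadratic, $m_{\overline{\nu}}=\frac{|\overline{\nu}+\overline{\rho}|^2}{2n}+c$ for a constant $c=c(\mathfrak{g},k)$, so that $e^{-2r\pi i m_{\overline{\nu}}}$ is the Gaussian $e^{-\frac{r\pi i}{n}|\overline{\nu}+\overline{\rho}|^2}$ up to the constant $e^{-2r\pi i c}$. Substituting and interchanging the order of summation, the coefficient of each pair $(w_1,w_2)$ becomes the finite lattice Gauss sum $\sum_{\nu}e^{-\frac{r\pi i}{n}|\overline{\nu}+\overline{\rho}|^2}e^{-\frac{2\pi i}{n}(\beta_{w_1,w_2},\,\overline{\nu}+\overline{\rho})}$, where $\beta_{w_1,w_2}=w_1(\overline{\lambda}+\overline{\rho})-w_2(\overline{\mu}+\overline{\rho})$ and $\nu$ runs over the finite index set of Proposition~\ref{u21-action-prop}, a union of cosets in $(M^t)^*/nM^t$. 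Since the Gaussian is $\overset{\circ}{W}$--invariant and the index set is $\overset{\circ}{W}$--stable, I would fold the double Weyl sum into a single one by equivariance, absorbing $w_2$ into the summation variable and setting $w=w_2^{-1}w_1$, so that the relevant argument is $\overline{\mu}+\overline{\rho}-w(\overline{\lambda}+\overline{\rho})$ up to sign.

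The crux is the evaluation of this Gauss sum, which I would carry out by completing the square and invoking reciprocity (Milgram/Poisson summation) for the finite quadratic form $x\mapsto \tfrac{r}{n}|x|^2$ on $(M^t)^*/nM^t$ --- this is exactly the Weil-representation content of the identity $u_{21}^r=Su_{12}^rS^{-1}$ used to prove Proposition~\ref{u21-action-prop}, with $S$ realised as a finite Fourier transform and $u_{12}^r=T^r$ as Gaussian multiplication. Reciprocity flips the sign of the quadratic form, turning $e^{-\frac{r\pi i}{n}|\cdots|^2}$ into $e^{+\frac{\pi i}{rn}|\cdots|^2}$; it produces a nonzero constant $v$ and the phase $e^{-\pi i|\beta|^2/(rn)}$ governed by the characteristic $\beta=\beta(\mathfrak{g},k)$ of the quadratic refinement; and it imposes the dual-lattice selection condition $\overline{\mu}+\overline{\rho}-w(\overline{\lambda}+\overline{\rho})-\beta\in rM^*$. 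Rewriting this as $\overline{\mu}+\overline{\rho}-\beta\equiv w(\overline{\lambda}+\alpha)\pmod{nM}$ with $\alpha\in rM^*$ matches the index set $P(\mathfrak{g},k,\lambda)$ verbatim, and collecting the surviving terms with signs $\epsilon(w)$ yields the first displayed formula. I expect this reciprocity step --- pinning down the precise finite lattice, the powers of $i$ and signs $\epsilon(w)$, and the values of $\beta$ and $v$ --- to be the main obstacle; everything preceding it is formal manipulation.

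Finally, for the ``In particular'' statement I would read off the coefficient of $\chi_\mu$, which by Proposition~\ref{u21-action-prop} equals $v_k\langle\overset{\circ}{\chi}_{\lambda},\overset{\circ}{\chi}_{\mu}\rangle_r$. Using that each $w\in\overset{\circ}{W}$ is an isometry, I expand $|\overline{\mu}+\overline{\rho}-w(\overline{\lambda}+\overline{\rho})|^2=|\overline{\mu}+\overline{\rho}|^2+|\overline{\lambda}+\overline{\rho}|^2-2(\overline{\mu}+\overline{\rho},w(\overline{\lambda}+\overline{\rho}))$ inside the exponential; this factors out the common Gaussian $e^{\frac{\pi i}{rn}(|\overline{\mu}+\overline{\rho}|^2+|\overline{\lambda}+\overline{\rho}|^2-|\beta|^2)}$ and leaves $\sum_{w}\epsilon(w)e^{-\frac{2\pi i}{rn}(\overline{\mu}+\overline{\rho},w(\overline{\lambda}+\overline{\rho}))}$ over the $w$ obeying $\overline{\mu}+\overline{\rho}-w(\overline{\lambda}+\overline{\rho})-\beta\in rM^*=r\overline{P}_k$. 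When no such $w$ places $\mu+\rho-\beta$ in the regular dominant chamber $P^{k++}$ the index set is empty and the pairing vanishes, giving the stated dichotomy.
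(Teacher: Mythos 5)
Your proposal is correct in outline but takes a genuinely different route from the paper. The paper's proof never touches Proposition~\ref{u21-action-prop} for the first display: it simply \emph{cites} the Kac--Peterson transformation rule from [KP,\ \S 3], which already gives $\Theta_{\lambda}|_{u_{21}^r}=v\sum_{\alpha}e^{\pi i k^{-1}(\alpha,r\alpha+2\beta)}\Theta_{\lambda+r\alpha+\beta}$ with the characteristic $\beta$ and scalar $v$ supplied, applies this termwise to the alternant $A_{\lambda+\rho}=\sum_{w\in\overset{\circ}{W}}\epsilon(w)\Theta_{w(\lambda+\rho)}$, and reads off the coefficient of $\Theta_{\mu+\rho}$; the norm identity $(w(\alpha),w(\alpha)+2\beta)=|\overline{\mu}+\overline{\rho}-w(\overline{\lambda}+\overline{\rho})|^2-|\beta|^2$ then converts the phases into the stated form, and the ``in particular'' clause follows by comparing with Proposition~\ref{u21-action-prop}. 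You instead run the derivation in the opposite direction: starting from Proposition~\ref{u21-action-prop}, expanding $(S^t_{0\nu})^2$ against the Weyl denominators, unfolding the double Weyl sum (valid, since singular points are killed by alternation), and evaluating the resulting lattice Gauss sum by Milgram/Poisson reciprocity --- which is precisely how the cited [KP] transformation law is itself proved. So the arithmetic crux you flag as ``the main obstacle'' (pinning down $v$, $\beta$, and the dual-lattice selection condition $\overline{\mu}+\overline{\rho}-w(\overline{\lambda}+\overline{\rho})-\beta\in rM^*$) is exactly the datum the paper imports for free by citation; your route buys self-containedness and makes the Weil-representation mechanism explicit, at the cost of rederiving known bookkeeping and of making the first display logically depend on Proposition~\ref{u21-action-prop}, whereas in the paper it is the theta-level statement that is primary and the pairing identity that is derived. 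Both arguments are sound relative to the same external input; neither has a gap the other lacks.
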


\begin{proof}
	By the transformation rules given in [KP,\textsection 3], level $k$ theta functions satisfy
	$$\Theta_{\lambda}|_{u_{21}^r}=v\sum_{\alpha\in M^*,\ r\alpha\pmod{kM}}{e^{\pi i k^{-1}(\alpha,r\alpha+2\beta)}\Theta_{\lambda+r\alpha+\beta}}$$
	for some scalar $v\in \C$ such that $|v|=|((k+h^\vee)M+rM^*)/(k+h^\vee)M|^{-1/2}$ and $\beta\in \overset{\circ}{\mathfrak{h}}{}^*$ such that
	$$kr|\alpha|^2\equiv 2(\alpha,\beta)\ (\text{mod }2\Z)$$
	for all $\alpha\in \frac{1}{r}M\cap \frac{1}{k}M^*$. Applying this to $A_{\lambda+\rho}=\sum_{w\in \overset{\circ}{W}}\epsilon(w)\Theta_{w(\lambda+\rho)}$ and finding the coefficient of $\Theta_{\mu+\rho}$ in the expansion of $A_{\lambda+\rho}|_{u_{21}^r}$ for $\mu\in P^{k+}$ gives the first result. Note that for $(\mu,w)\in P(\mathfrak{g},k,\lambda)$, $w(\alpha) = \overline{\mu}+\overline{\rho}-\beta-w(\overline{\lambda}+\overline{\rho})\in rM^*$ and so 
	$$(w(\alpha),w(\alpha)+2\beta)=|\overline{\mu}+\overline{\rho}-w(\overline{\lambda}+\overline{\rho})|^2-|\beta|^2.$$ 
	The second identity in the theorem statement follows immediately from the first and \ref{u21-action-prop}.
\end{proof}

\bigskip

Notice that when $r=1$, one has that $\beta=0$ and $\left\langle\overset{\circ}{\chi}_{\lambda},\overset{\circ}{\chi}_{\mu}\right\rangle_1$ is a scalar multiple of $\overset{\circ}{\chi}_{\overline{\lambda}}$, matching proposition \ref{u21-identity}. Indeed, $\left\langle\overset{\circ}{\chi}_{\lambda},\overset{\circ}{\chi}_{\mu}\right\rangle_r$ is in general a multiple of a \emph{summand} of the character $\overset{\circ}{\chi}_{\overline{\lambda}}$ evaluated at $\frac{2\pi i(\overline{\mu}+\overline{\rho})}{r(k+h^\vee)}$. Specifically, the summand is
$$\sum_{\substack{w\in\overset{\circ}{W}:\\ \overline{\mu}+\overline{\rho}-w(\overline{\lambda}+\overline{\rho})-\beta\in rM^*}}\epsilon(w) e^{w(\overline{\lambda}+\overline{\rho})}.$$

\section{Appendix}
\label{affine-algebra-prelims}
\label{notations-conventions}
\subsection{Preliminaries}
This section very briefly reminds the reader about the theory of affine Kac-Moody Lie algebras. For greater detail, the reader is encouraged to consult [Ka]. For the sake of simplicity, affine Kac-Moody Lie algebras will often be referred to as affine Lie algebras or affine algebras.

Let $A=(a_{ij})_{0\leq i,j\leq \ell}$ be an irreducible affine (generalized) Cartan matrix and $(\mathfrak{h},\Pi,\Pi^\vee)$ be a realization of $A,$ with
$$\Pi=\{\alpha_0,\alpha_1,\ldots,\alpha_\ell\}\subset \mathfrak{h}^* \text{ and } \Pi^\vee=\{\alpha_0^\vee,\alpha_1^\vee,\ldots,\alpha_\ell^\vee\} \subset \mathfrak{h}.$$ 
Let $(\overset{\circ}{\mathfrak{h}},\overset{\circ}{\Pi},\overset{\circ}{\Pi}{}^\vee)$ be a realization of the associated finite root system whose Cartan matrix is $\overset{\circ}{A}=(a_{ij})_{1\leq i,j\leq \ell}.$ In particular,  
$$\overset{\circ}{\Pi}=\{\alpha_1,\ldots,\alpha_\ell\}\subset \overset{\circ}{\mathfrak{h}}{}^* \text{ and }\overset{\circ}{\Pi}{}^\vee=\{\alpha_1^\vee,\ldots,\alpha_\ell^\vee\}\subset \overset{\circ}{\mathfrak{h}}.$$ The vector spaces $\mathfrak{h}$ and $\mathfrak{h}^*$ have bases $\Pi^\vee \cup \{d\}$ and $\Pi \cup \{\Lambda_0\}$, resp., where $\langle\Lambda_0,\alpha_i^\vee\rangle = \delta_{0i}$ and $\langle \Lambda_0,d\rangle =0$.
The affine Kac-Moody Lie algebra $\mathfrak{g}=\mathfrak{g}(A)$ is generated by $\mathfrak{h}$ and $e_i,f_i$, $0\leq i\leq \ell,$ subject to the Chevalley-Serre relations. The Lie algebra $\overset{\circ}{\mathfrak{g}}$ is the Lie subalgebra of $\mathfrak{g}$ generated by $\overset{\circ}{\mathfrak{h}}$ and the generators $e_i,f_i$, $1\leq i\leq \ell$, along with their corresponding Chevalley-Serre relations. The algebra $\overset{\circ}{\mathfrak{g}}$ is a finite dimensional simple Lie algebra and is referred to as the \emph{underlying simple Lie algebra} of $\mathfrak{g}$. The transpose $A^t$ has the realization $(\mathfrak{h},\Pi^\vee,\Pi)$ and has associated affine Kac-Moody algebra $\mathfrak{g}^t:=\mathfrak{g}(A^t)$ called the \emph{transpose algebra} of $\mathfrak{g}(A).$ 

The irreducible affine Kac-Moody algebras are classified (up to isomorphism) by the affine Dynkin diagrams given in [Ka, Ch. 4]. More precisely, there are three tables of affine algebras: $\text{Aff } 1, \text{Aff } 2,$ and $\text{Aff } 3$. 

If an affine algebra is isomorphic to one from table $\text{Aff } r$, it is said to be of \emph{type} $X_N^{(r)}$ for $X\in\{A,B,C,D,E,F,G\}$ and $N\in \Z_{>0}$. The affine algebras in $\text{Aff } 1$ are called \emph{untwisted affine Lie algebras} while those in $\text{Aff } 2$ and $\text{Aff } 3$ are called \emph{twisted affine Lie algebras}. For any affine Lie algebra $\mathfrak{g}$, the Dynkin diagram of its underlying simple Lie algebra $\overset{\circ}{\mathfrak{g}}$ can be obtained from the affine Dynkin diagram of $\mathfrak{g}$ by removing the $0$ node (and all edges attached to the 0 node).

For an affine irreducible Cartan matrix $A,$ let $D(A)$ denote its Dynkin diagram. The numerical \emph{labels} of $D(A)$ are $a_0,a_1,\ldots,a_\ell$ and can be found in the aforementioned tables. The \emph{dual labels} $a_0^\vee,a_1^\vee,\ldots,a_\ell^\vee$ of $D(A)$ are the labels of $D(A^t).$ The \emph{Coxeter number} $h=\sum_{i=0}^{\ell}{a_i}$ and \emph{dual Coxeter number} $h^\vee=\sum_{i=0}^{\ell}{a_i^\vee}$ play very important roles in the theory.

Using these labels and the Cartan matrix $A=(a_{ij})_{0\leq i,j\leq \ell}$, define the following symmetric nondegenerate bilinear forms on $\mathfrak{h}$ and $\mathfrak{h}^*$,
$$(\alpha_i^\vee,\alpha_j^\vee)=a_ja_j^{\vee -1}a_{ij},\ (\alpha_k^\vee,d)=a_0\delta_{0k},  \text{ and } (d,d)=0\  (i,j,k=0,\ldots,\ell)$$
$$(\alpha_i,\alpha_j)=a_i^{\vee}a_i^{-1}a_{ij},\ (\alpha_k,\Lambda_0)=a_0^{-1}\delta_{0k},  \text{ and } (\Lambda_0,\Lambda_0)=0\ (i,j,k=0,\ldots,\ell),$$
respectively. The bilinear form on $\mathfrak{h}$ induces an isomorphism 
$$\nu:\mathfrak{h}\to \mathfrak{h}^*$$
$$\nu(\alpha_i^{\vee})=a_ia_i^{\vee -1}\alpha_i,\ \nu(d)=a_0\Lambda_0$$
which will be used to identify both vector spaces (with little to no warning). We will also be using the map $\alpha\mapsto\alpha^\vee$ where $\alpha$ is a nonisotropic element of $\mathfrak{g}^*$ and  $\alpha^\vee=\frac{2}{(\alpha,\alpha)}\nu^{-1}(\alpha).$ Define the important isotropic elements
$$\delta=\sum_{i=0}^{\ell}{a_i\alpha_i}\in \mathfrak{h}^*\text{ and } K=\sum_{i=0}^{\ell}{a_i^\vee \alpha_i^\vee}\in\mathfrak{h}.$$
The element $K$ is called the \emph{canonical central element} of $\mathfrak{g}$. Also define the element $\theta=\delta-a_0\alpha_0$, which is a dominant root of $\overset{\circ}{\mathfrak{g}}$. With these new elements define the new bases
$$\{\alpha_1,\ldots,\alpha_\ell,\Lambda_0,\delta\}\subset \mathfrak{h}^*$$
$$\{\alpha_1^\vee,\ldots,\alpha_\ell^\vee,d,K\}\subset \mathfrak{h}$$
and the projection map
$$\pi:\mathfrak{h}^*\to \overset{\circ}{\mathfrak{h}}{}^*$$
$$\alpha\mapsto\overline{\alpha}$$
with respect to the new basis. 

\bigskip  

The \emph{Weyl group} $W$ of $\mathfrak{g}$ is the group of orthogonal transformations on $\mathfrak{h}^*$ generated by the reflections
$$s_{i}(\lambda)=\lambda-\langle \lambda,\alpha_i^\vee\rangle \alpha_i$$
for $i=0,\ldots,\ell$. Using the isomorphism $\nu$, $W$ also acts by orthogonal transformations on $\mathfrak{h}$. Let $\overset{\circ}{W}$ be the subgroup generated by $s_i,$ $1\leq i\leq \ell,$ then $\overset{\circ}{W}$ is the Weyl group of the underlying simple Lie algebra $\overset{\circ}{\mathfrak{g}}$. Moreover, the Weyl group $W$ can be decomposed as the semidirect product 
$$W=\overset{\circ}{W}\ltimes T\cong \overset{\circ}{W}\ltimes M$$
where $M$ is the lattice spanned by the set $\overset{\circ}{W}(a_0^{-1}\theta)$, $T$ is the abelian group of translations by elements of $M$ (acting on $\mathfrak{h}^*$), and $\overset{\circ}{W}$ acts on $M$ in the natural way. The elements of $T$ are written as $t_\alpha,$ $\alpha\in M$. For $\mathfrak{g}$ of type $X_N^{(r)}$, in the case $r=a_0$ the lattice $M=\nu(\overset{\circ}{Q}{}^\vee)$ and in the case $r>a_0$, $M=\overset{\circ}{Q}$ (see the next subsection for the definitions of relevant lattices).

\bigskip

\subsection{Integrable Highest Weight Modules}
\label{integrable-highest}

Throughout this work, we will be dealing with the category of integrable highest weight modules of level $k$, where $k$ is a positive integer. This category is endowed with the usual direct sum structure. (Unfortunately, the usual tensor product does not preserve the level.) In order to describe these modules, let us very rapidly recall the various lattices and sets that feature in the representation theory. 

Let $\mathfrak{g}$ be an affine Kac-Moody algebra. Recall that the \emph{root lattice} $Q$ is the lattice in $\mathfrak{h}^*$ generated by the simple roots $\alpha_0,\ldots,\alpha_\ell.$ The \emph{coroot lattice} $Q^\vee$ is the lattice in $\mathfrak{h}$ generated by $\alpha_0^\vee,\ldots,\alpha_\ell^\vee.$ Define the \emph{fundamental weights} to be elements $\Lambda_i\in \mathfrak{h}^*$ such that $\Lambda_i(\alpha_j^\vee)=\delta_{i,j},$ for $i,j=0,\ldots,\ell.$ The \emph{weight lattice} $P$ is the lattice in $\mathfrak{h}^*$ generated by the fundamental weights $\Lambda_0,\ldots,\Lambda_\ell.$ The \emph{dominant integral weights} $P^+$ is the set of $\lambda \in P$ such that $\langle \lambda,\alpha_i^\vee\rangle\geq 0$ for $i=0,\ldots,\ell$. The \emph{regular dominant integral weights} $P^{++}$ are the dominant integral weights such that the above inequality is strict for all $\alpha_i^\vee$. When decorated above by the symbol $\circ$, these lattices are the corresponding lattices for $\overset{\circ}{\mathfrak{g}}$.

\bigskip

A very important element of $P^{++}$ is 
$$\rho:=\Lambda_0+\Lambda_1+\cdots +\Lambda_\ell.$$
The level $k$ of an element $\lambda\in \mathfrak{h}^*$ is the quantity $\langle \lambda,K\rangle$. Note that the level of $\rho$ is $h^\vee$. Define the sets
$$P_k=\{\lambda\in P: \langle\lambda,K\rangle = k \text{ and } (\lambda,\alpha)\in \Z\text{ for all }\alpha \in M\},$$
$$P^k=\{\lambda\in P: \langle\lambda,K\rangle = k \text{ and } \langle \lambda,\alpha^\vee\rangle\in \Z\text{ for all }\alpha \in \overset{\circ}{Q}{}^\vee\},$$
$$P_k^{+}=P_k\cap P^{+},\ P_k^{++}=P_k\cap P^{++}, \ P^{k+}=P^k\cap P^{+},\ P^{k++}=P^k\cap P^{++}.$$

\bigskip

For an affine algebra $\mathfrak{g}$, an \emph{integrable highest weight module of highest weight} $\lambda$ is a non-trivial $\mathfrak{g}$-module $V$ with decomposition
$$V=\bigoplus_{\mu\in\mathfrak{h}^*}V_\mu,$$
$$V_\mu=\{v\in V:h\cdot v=\mu(h)v\text{ for all }h\in\mathfrak{h}\},$$
such that $\lambda\in P^+$, there is an element $v_0\in V_\lambda$ (unique up to scaling) that generates $V$, and $e_iv_0=0$ for all $i=0,\ldots,\ell$. The irreducible integrable highest weight modules of highest weight $\lambda\in P^+$ are denoted $L_{\lambda}$. The central element $K$ acts by a scalar $k$, called the \emph{level}, on $L_{\lambda}$. The terminology is consistent since the level of $L_{\lambda}$ is the level of $\lambda$ as an element of $\mathfrak{h}^*$. 

\subsection{Theta Functions and Characters}
\label{theta-fns-characters}

This subsection discusses characters and theta functions associated with affine Lie algebras. For a full account of the theory, see [KP] or [Ka]. 

Let $\mathfrak{g}$ be an affine Lie algebra, $k\in \Z_{>0}$, and $\lambda \in P_k$, then the \emph{classical theta function} of level $k$ with characteristic $\overline{\lambda}$ is defined
$$\Theta_{\lambda}=e^{-\frac{(\lambda,\lambda)}{2k}\delta}\sum_{\alpha\in M}e^{t_{\alpha}(\lambda)},$$
(recall the notation $t_{\alpha}$ given at the end of \ref{affine-algebra-prelims}). It can be shown that $\lambda\equiv \mu \pmod{kM+\C\delta}$ implies $\Theta_{\lambda}=\Theta_{\mu}$. These functions are defined on the half-space $Y=\{\lambda\in \mathfrak{h}|\ \text{Re}(\lambda,\delta)>0\}$. Let $Th_k$ be the vector space spanned by $\Theta_{\lambda}$, $\lambda\in P_k$. The following result is found in [Ka,\textsection 13].

\begin{propo}
	\label{theta-basis}
	Let $k>0,$ then the set of functions $\{\Theta_{\lambda}|\lambda\in P_k\pmod{kM+\C\delta}\}$ is a $\C$-basis of $Th_k$.
\end{propo}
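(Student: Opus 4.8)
The plan is to verify the two defining properties of a basis separately, with all the content residing in linear independence. Spanning is essentially built into the definition: $Th_k$ is by definition spanned by $\{\Theta_\lambda : \lambda \in P_k\}$, and the remark preceding the proposition records that $\Theta_\lambda = \Theta_\mu$ whenever $\lambda \equiv \mu \pmod{kM + \C\delta}$. Hence the class-indexed family $\{\Theta_\lambda : \lambda \in P_k \bmod (kM + \C\delta)\}$ already spans $Th_k$. I would first note that this index set is finite, since under the projection $\pi:\mathfrak{h}^*\to\overset{\circ}{\mathfrak{h}}{}^{*}$ the classes correspond to the cosets of $kM$ meeting the lattice $\pi(P_k)$, and $kM$ and $\pi(P_k)$ are commensurable full-rank lattices in $\overset{\circ}{\mathfrak{h}}{}^{*}$; thus $Th_k$ is finite-dimensional and it remains only to prove the spanning family is independent.

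For independence the key tool is that the exponentials $e^\mu$, for distinct $\mu\in\mathfrak{h}^*$, are linearly independent as functions on $Y$ (independence of characters of the additive group), so it suffices to track the exponents occurring in each $\Theta_\lambda$ and show there is no overlap between inequivalent classes. Using the translation action $t_\alpha(\lambda) = \lambda + k\alpha - \big((\lambda,\alpha) + \tfrac{k}{2}(\alpha,\alpha)\big)\delta$, valid for $\lambda$ of level $k$, the monomials of $\Theta_\lambda$ have exponents $\lambda + k\alpha - c_\alpha\delta$ for $\alpha\in M$ (with explicit scalars $c_\alpha$ absorbing the $\delta$-prefactor), each occurring with coefficient $1$. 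Applying $\pi$, which annihilates $\delta$ and $\Lambda_0$, the projected exponents of $\Theta_\lambda$ form exactly the coset $\overline{\lambda} + kM$ in $\overset{\circ}{\mathfrak{h}}{}^{*}$. I would then extract two facts: (i) since $k\neq 0$, the map $\alpha\mapsto \overline{\lambda}+k\alpha$ is injective on $M$, so distinct monomials of a single $\Theta_\lambda$ have distinct projected exponents, hence are genuinely distinct; and (ii) if $\lambda\not\equiv\mu \pmod{kM+\C\delta}$ then $\overline{\lambda}-\overline{\mu}\notin kM$ (the $\C\delta$ part and the equal $\Lambda_0$-parts drop out under $\pi$), so the cosets $\overline{\lambda}+kM$ and $\overline{\mu}+kM$ are disjoint and $\Theta_\lambda,\Theta_\mu$ share no exponent, even after projection. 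Granting (i) and (ii), any relation $\sum_{[\lambda]} c_{[\lambda]}\Theta_\lambda = 0$ can be tested against a single monomial $e^\nu$ of a chosen $\Theta_\lambda$: by (ii) it appears in no other summand and by (i) with coefficient $1$ in $\Theta_\lambda$, forcing $c_{[\lambda]}=0$; ranging over all classes yields independence.

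The only genuine obstacle is the exponent bookkeeping: pinning down the translation formula in the appendix's conventions and confirming that the $\pi$-projection truly separates the classes modulo $kM+\C\delta$. In particular I would be careful to check that although the $\delta$-shifts $c_\alpha$ do vary among the monomials, they can never identify a monomial of one class with a monomial of an inequivalent class once the projected exponents already lie in disjoint cosets of $kM$. This verification is routine once the action of $t_\alpha$ is written out explicitly, and it is the step I would take most care over.
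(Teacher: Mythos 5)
The paper itself gives no proof of this proposition---it is quoted from [Ka, \S 13]---and your argument is essentially the standard one found there: for $\lambda,\mu\in P_k$ of equal level, $\lambda\not\equiv\mu \pmod{kM+\C\delta}$ forces $\overline{\lambda}+kM$ and $\overline{\mu}+kM$ to be disjoint cosets, so the Fourier supports of $\Theta_\lambda$ and $\Theta_\mu$ are disjoint and a vanishing linear combination kills each coefficient, exactly as in your steps (i) and (ii). The one point to phrase carefully is that each $\Theta_\lambda$ is an \emph{infinite} series, so ``independence of characters'' must be invoked as uniqueness of Fourier coefficients for holomorphic functions on $Y$ (not Artin's finite-sum statement); with that standard adjustment your proof is complete and coincides with the cited argument.
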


\bigskip

The space $Th_k$ admits a right action of the metaplectic group $Mp(2,\Z)$, which is a double cover of
$$SL(2,\Z)=\left\{\begin{pmatrix}
a & b\\c& d
\end{pmatrix}: a,b,c,d\in \Z \text{ and }ad-bc=1\right\}.$$ The group $SL(2,\Z)$ has the generators 
$$T=\begin{pmatrix}
1 & 1\\0& 1
\end{pmatrix} \text{ and } S=\begin{pmatrix}
0 & -1\\1& 0
\end{pmatrix}$$
which play a prominent role. The action of $A\in Mp(2,\Z)$ on $\Theta_{\lambda}$ is denoted $\Theta_{\lambda}|_A$. The same notation will be used when referring to the action of $SL(2,\Z)$ when applicable.

\bigskip

For $r=1,2,3,$ the congruence subgroups
$$\Gamma_1(r)=\left\{\begin{pmatrix}
a & b\\
c& d
\end{pmatrix} : a,b,c,d\in \Z \text{ and } \begin{pmatrix}
a & b\\
c& d
\end{pmatrix}\equiv \begin{pmatrix}
1 & *\\
0& 1
\end{pmatrix}\text{ (mod r)}\right\},$$
are also important. The group $\Gamma_1(r)$ is generated by the matrices
$$u_{12}=T^{-1}=\begin{pmatrix}
1 & -1\\
0 & 1
\end{pmatrix} \text{ and } 
u_{21}^r=\begin{pmatrix}
1 & 0\\
r & 1
\end{pmatrix}$$
which satisfy the relations $(u_{12}u_{21}^r)^s=1,$ where $s=3,4,6$ (resp.), and $S=u_{12}u_{21}u_{12}$ [IS].

\bigskip

If $V$ is a $\mathfrak{g}$-module with weight space decomposition 
$$V=\bigoplus_{\mu\in\mathfrak{h}^*}V_{\mu}$$
such that $\dim V_{\mu}<\infty$, the \emph{formal character} of $V$ is the formal sum
$$ch(V)=\sum_{\mu\in \mathfrak{h}^*} (\dim V_{\mu})e^{\mu}.$$
For any $\lambda\in P^{k+}$, define the \emph{normalized character} or simply \emph{character}
$$\chi_{\lambda}=e^{-m_{\lambda}\delta}ch(L_\lambda)$$
where $m_{\lambda}$ is the so-called \emph{modular anomaly} of $\lambda$
$$m_{\lambda}=\frac{|\lambda+\rho|^2}{2(k+h^\vee)}-\frac{|\rho|^2}{2h^\vee}.$$
The complex vector space spanned by the characters $\chi_\lambda$ of $\mathfrak{g}$ of level $k$ will be denoted $Ch_k:=Ch_k(\mathfrak{g})$. The characters can be expressed in terms of classical theta functions. To explain this, first define the \emph{alternants}
$$A_{\lambda}=\sum_{w\in \overset{\circ}{W}}\epsilon(w)\Theta_{w(\lambda)}$$
and let $Th_k^{-}$ be the space spanned by the $A_{\lambda}$ for $\lambda\in P^{k+}$. The following result can also be found in [Ka,\textsection 12].

\begin{propo}
	If $k>0$ and $\lambda\in P^{k+}$, then
	$$\chi_{\lambda}=A_{\lambda+\rho}/A_{\rho}.$$
	Furthermore, $\chi_{\lambda}\in Th_{k}$. 
\end{propo}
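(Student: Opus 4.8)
The plan is to deduce both claims from the Weyl--Kac character formula combined with the decomposition $W=\overset{\circ}{W}\ltimes M$ recorded in the appendix. First I would write the Weyl--Kac formula for the integrable module $L_\lambda$ in the form
$$ch(L_\lambda)=\frac{\sum_{w\in W}\epsilon(w)e^{w(\lambda+\rho)}}{\sum_{w\in W}\epsilon(w)e^{w(\rho)}},$$
where the denominator is the affine Weyl denominator, rewritten through the denominator identity as $\sum_{w\in W}\epsilon(w)e^{w(\rho)}$ after cancelling the common factor $e^{-\rho}$ against the numerator. This presents the normalized character as a ratio of two alternating sums over the full affine Weyl group.

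Next I would exploit $W=\overset{\circ}{W}\ltimes M$. Writing $w=t_\beta\bar w$ with $\bar w\in\overset{\circ}{W}$, $\beta\in M$, and using that translations have sign $\epsilon(t_\beta)=1$, the numerator becomes $\sum_{\bar w}\epsilon(\bar w)\sum_{\beta\in M}e^{t_\beta\bar w(\lambda+\rho)}$. Since $\overset{\circ}{W}$ and the translations preserve both the level and the bilinear form, one has $(\bar w(\lambda+\rho),\bar w(\lambda+\rho))=|\lambda+\rho|^2$, and the inner sum is exactly $e^{\frac{|\lambda+\rho|^2}{2(k+h^\vee)}\delta}\Theta_{\bar w(\lambda+\rho)}$ by the definition of the classical theta function, recalling that $\lambda+\rho$ has level $k+h^\vee$. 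Hence the numerator equals $e^{\frac{|\lambda+\rho|^2}{2(k+h^\vee)}\delta}A_{\lambda+\rho}$, and the identical manipulation with $\rho$ (level $h^\vee$) writes the denominator as $e^{\frac{|\rho|^2}{2h^\vee}\delta}A_\rho$. Dividing, the $\delta$-exponents combine into precisely the modular anomaly $m_\lambda$, so $ch(L_\lambda)=e^{m_\lambda\delta}A_{\lambda+\rho}/A_\rho$ and therefore $\chi_\lambda=e^{-m_\lambda\delta}ch(L_\lambda)=A_{\lambda+\rho}/A_\rho$, the first assertion. The only bookkeeping to watch is that the $\delta$-prefactor of $\Theta_\mu$ is constant along each finite Weyl orbit, which is what lets me pull it out of the alternant.

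For the membership $\chi_\lambda\in Th_k$ I would argue by levels. As an alternating sum of level-$(k+h^\vee)$ theta functions we have $A_{\lambda+\rho}\in Th_{k+h^\vee}$, while $A_\rho\in Th_{h^\vee}$. On the other hand $\chi_\lambda=e^{-m_\lambda\delta}ch(L_\lambda)$ is a genuine holomorphic function on $Y$, namely the convergent series attached to the weight multiplicities of $L_\lambda$, which has no poles. The point is then that a holomorphic quotient of a level-$(k+h^\vee)$ theta function by a level-$h^\vee$ theta function is a level-$k$ theta function: multiplication of theta functions adds levels, $Th_a\cdot Th_b\subseteq Th_{a+b}$, so the inverse operation subtracts them, and one verifies that $\chi_\lambda$ inherits exactly the quasi-periodicity under $M$-translations (with the associated automorphy factor) that characterizes $Th_k$ among holomorphic functions on $Y$. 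Identifying $Th_k$ with the finite-dimensional space of holomorphic functions obeying that transformation law then places $\chi_\lambda$ in $Th_k$.

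The main obstacle is this last step. The formula $\chi_\lambda=A_{\lambda+\rho}/A_\rho$ is essentially a formal rearrangement, but seeing that the quotient lands in $Th_k$ requires genuinely that $A_\rho$ divides $A_{\lambda+\rho}$ in the ring of theta-type functions and that the level subtracts correctly. I would handle the divisibility by appealing to the fact that $ch(L_\lambda)$ is already known to be an honest formal character, with nonnegative integer multiplicities, so the putative pole along the zero locus of $A_\rho$ cannot occur; the level count then follows by tracking the $\delta$-exponents and the $M$-quasi-periodicity as above. This is precisely the content of the transformation theory in [Ka,\S12--13] and [KP], on which I would lean for the automorphy law defining $Th_k$.
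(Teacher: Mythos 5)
Your derivation of the identity $\chi_{\lambda}=A_{\lambda+\rho}/A_{\rho}$ is correct, and it is in substance the argument of [Ka,\textsection 12--13], which is all the paper itself offers (the proposition is stated with a citation, not proved): Weyl--Kac plus the denominator identity, the decomposition $W=\overset{\circ}{W}\ltimes M$ with $\epsilon(t_{\beta})=1$, resummation of the translation part into classical theta functions, and the observation that the prefactor $e^{\frac{|\mu|^2}{2(k+h^\vee)}\delta}$ is constant along finite Weyl orbits so that the two $\delta$-exponents combine into $m_{\lambda}$. That bookkeeping is all in order.

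The second half, however, has a genuine gap, and it sits exactly at the step you flagged: ``identifying $Th_k$ with the finite-dimensional space of holomorphic functions obeying that transformation law.'' The level-$k$ quasi-periodicity under $M$-translations constrains the function only in the directions of $\overset{\circ}{\mathfrak{h}}$ and the central direction; it imposes nothing on the dependence on $\tau$. For each fixed $\tau$ the solution space has dimension $|M^*/kM|$, so the space of holomorphic functions on $Y$ satisfying the law is a \emph{free module of rank $|M^*/kM|$ over the ring of holomorphic functions of $\tau$}, not a finite-dimensional $\C$-vector space. Your argument therefore yields only an expansion $\chi_{\lambda}=\sum_{\mu}c^{\lambda}_{\mu}(\tau)\,\Theta_{\mu}$ with coefficients holomorphic in $\tau$ --- precisely the string-function expansion of [Ka,\textsection 13] --- and does not place $\chi_{\lambda}$ in the $\C$-span of classical theta functions, which is how section \ref{theta-fns-characters} defines $Th_k$. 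The coefficients are genuinely nonconstant: for $A_1^{(1)}$ at level $1$ one has, up to normalization, $\chi_{\Lambda_0}=\eta(\tau)^{-1}\Theta_{\Lambda_0}$, and comparing Fourier expansions shows this is not a constant-coefficient combination of $\Theta_{\Lambda_0},\Theta_{\Lambda_1}$. The same defect already infects your auxiliary claim $Th_a\cdot Th_b\subseteq Th_{a+b}$: a product of two classical theta functions is a combination of level-$(a+b)$ theta functions whose coefficients are theta constants in $\tau$, so ``levels add'' only in the module sense, and the inverse operation of ``subtracting levels'' likewise cannot return you to a $\C$-span. (A sanity check on why no such $\C$-linear identity should be expected: under the Kac--Peterson action the classical theta functions carry modular weight $\ell/2$ while normalized characters carry weight $0$.) What your computation honestly establishes is $A_{\rho}\,\chi_{\lambda}=A_{\lambda+\rho}\in Th_{k+h^\vee}$, i.e.\ the inclusion of $Ch_k^-$ into $Th_{k+h^\vee}$ that the rest of the paper actually uses; the final membership assertion $\chi_{\lambda}\in Th_k$ must be read with $Th_k$ taken as the module over holomorphic functions of $\tau$ spanned by the $\Theta_{\mu}$ (the string-function formulation of [Ka] and [KP]), and with the paper's literal $\C$-span definition your closing step fails and is not repairable by sharpening the automorphy argument.
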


\bigskip

In the case when $r\leq a_0$, the space of characters is invariant under the action of $SL(2,\Z)$, i.e., the action of $Mp(2,\Z)$ factors through $SL(2,\Z)$. In the case $r>a_0$, $Ch_k$ is invariant under certain subgroups of finite index in $SL(2,\Z)$. The explicit actions of these subgroups will be discussed in the upcoming sections.

\end{document}